\documentclass{article}
\usepackage{amssymb,amsfonts,amsmath,amsthm,amsopn,amstext,amscd,latexsym,xy,stmaryrd,mathtools}
\usepackage[hidelinks]{hyperref}

\input xy
\xyoption{all}

\setlength{\textheight}{8.75in}
\setlength{\textwidth}{6.5in}
\setlength{\topmargin}{0.0in}
\setlength{\headheight}{0.0in}
\setlength{\headsep}{0.0in}
\setlength{\leftmargin}{0.0in}
\setlength{\oddsidemargin}{0.0in}
\setlength{\parindent}{3pc}

\newtheorem{theorem}{Theorem}[section]
\newtheorem{lemma}[theorem]{Lemma}

\newtheorem{proposition}[theorem]{Proposition}
\newtheorem{corollary}[theorem]{Corollary}
\newtheorem{definition}[theorem]{Definition}

\theoremstyle{remark}

\theoremstyle{definition}
\newtheorem{construction}[theorem]{Construction}

\DeclareMathOperator{\GL}{GL}

\DeclareMathOperator{\Def}{Def}

\DeclareMathOperator{\Frac}{Frac}
\DeclareMathOperator{\Frob}{Frob}

\DeclareMathOperator{\Hom}{Hom}

\DeclareMathOperator{\Fil}{Fil}

\DeclareMathOperator{\Gal}{Gal}
\DeclareMathOperator{\Aut}{Aut}

\DeclareMathOperator{\ad}{ad}

\DeclareMathOperator{\End}{End}
\DeclareMathOperator{\tr}{tr}

\DeclareMathOperator{\Res}{Res}

\DeclareMathOperator{\Art}{Art}

\DeclareMathOperator{\Iw}{Iw}

\newcommand{\cA}{{\mathcal A}}

\newcommand{\cC}{{\mathcal C}}
\newcommand{\cD}{{\mathcal D}}
\newcommand{\cE}{{\mathcal E}}

\newcommand{\cG}{{\mathcal G}}

\newcommand{\cJ}{{\mathcal J}}

\newcommand{\cM}{{\mathcal M}}

\newcommand{\cO}{{\mathcal O}}
\newcommand{\cP}{{\mathcal P}}

\newcommand{\cS}{{\mathcal S}}

\newcommand{\frl}{{\mathfrak l}}
\newcommand{\ffrm}{{\mathfrak m}}

\newcommand{\frp}{{\mathfrak p}}
\newcommand{\frq}{{\mathfrak q}}

\newcommand{\bbA}{{\mathbb A}}

\newcommand{\bbC}{{\mathbb C}}

\newcommand{\bbQ}{{\mathbb Q}}
\newcommand{\bbR}{{\mathbb R}}

\newcommand{\bbT}{{\mathbb T}}

\newcommand{\bbZ}{{\mathbb Z}}

\newcommand{\wv}{{\widetilde{v}}}

\newcommand{\barQl}{{\overline{\bbQ}_l}}

\newcommand{\num}{d}

\title{Automorphy lifting for residually reducible $l$-adic Galois representations, II}
\begin{document}
	\author{Patrick B. Allen, James Newton, and Jack A. Thorne}
	
\maketitle
\begin{abstract}
	We revisit the paper \cite{Tho16} by the third author, proving new automorphy lifting theorems for residually reducible Galois representations of unitary type in which the residual representation is permitted to have an arbitrary number of irreducible constituents. 
\end{abstract}
\tableofcontents
\section{Introduction}

In this paper, we prove new automorphy lifting theorems for Galois representations of unitary type. Thus we are considering representations $\rho : G_F \to \GL_n(\overline{\bbQ}_l)$, where $G_F$ is the absolute Galois group of a CM field $F$, and $\rho$ is conjugate self-dual, i.e.\ there is an isomorphism $\rho^c \cong \rho^\vee \otimes \epsilon^{1-n},$ where $c \in \Aut(F)$ is complex conjugation. We say in this paper that such a representation is automorphic if there exists a regular algebraic, conjugate self-dual, cuspidal automorphic representation $\pi$ which is matched with $\rho$ under the Langlands correspondence. (See \S \ref{subsec_notation} below for a more precise formulation.)

We revisit the context of the paper \cite{Tho16}, proving theorems valid in the 
case that $\overline{\rho}$ is absolutely reducible, but still satisfies a 
certain non-degeneracy condition (we say that $\overline{\rho}$ is ``Schur''). 
The first theorems of this type were proved in the paper \cite{Tho16}, under 
the assumption that $\overline{\rho}$ has only two irreducible constituents. 
Our main motivation here is to remove this restriction. Our results are applied 
to the problem of symmetric power functoriality in \cite{new-tho-symm}, where 
they are combined with level raising theorems to establish automorphy of 
symmetric powers for certain level $1$ Hecke eigenforms congruent to a theta 
series.

We are also able to weaken some other hypotheses in \cite{Tho16}, leading to 
the following 
result, which is the main theorem of this paper.
\begin{theorem}[Theorem \ref{thm_new_main_theorem}]\label{thm_intro_theorem}
	 Let $F$ be an imaginary CM number field with maximal totally real subfield $F^+$, and let $n \geq 2$ be an integer. Let $l$ be a prime, and suppose that $\rho : G_F \rightarrow \mathrm{GL}_n(\overline{\bbQ}_l)$ is a continuous semisimple representation satisfying the following hypotheses.
	\begin{enumerate} \item $\rho^c \cong \rho^\vee \epsilon^{1-n}$.
		\item $\rho$ is ramified at only finitely many places.
		\item  $\rho$ is ordinary of weight $\lambda$, for some $\lambda \in (\bbZ_+^n)^{\Hom(F, \barQl)}$.
		\item There is an isomorphism $\overline{\rho}^{\text{ss}} \cong 
		\overline{\rho}_1 \oplus \dots \oplus \overline{\rho}_\num$, where each 
		$\overline{\rho}_i$ is absolutely irreducible and satisfies 
		$\overline{\rho}_i^c \cong \overline{\rho}_i^\vee \epsilon^{1-n}$, and 
		$\overline{\rho}_i \not\cong \overline{\rho}_j$ if $i \neq j$.
		\item There exists a finite place $\wv_0$ of $F$, prime to $l$, such 
		that $\rho|_{G_{F_{\wv_0}}}^\text{ss} \cong \oplus_{i=1}^n \psi 
		\epsilon^{n-i}$ for some unramified character $\psi : G_{F_{\wv_0}} 
		\rightarrow \overline{\bbQ}_l^\times$.
		\item There exists a RACSDC representation $\pi$ of $\mathrm{GL}_n(\bbA_F)$ and $\iota : \barQl \to \bbC$ such that: \begin{enumerate} \item $\pi$ is $\iota$-ordinary. 
			\item $\overline{r_{ \iota}(\pi)}^\text{ss} \cong \overline{\rho}^\text{ss}$.
			\item $\pi_{\wv_0}$ is an unramified twist of the Steinberg representation.
		\end{enumerate} 
		\item $F(\zeta_l)$ is not contained in $\overline{F}^{\ker \ad 
	(\overline{\rho}^\text{ss})}$ and $F$ is not contained in $F^+(\zeta_l)$. For 
each $1 \leq i, j \leq \num$, $\overline{\rho}_i|_{G_{F(\zeta_l)}}$ is 
absolutely 
irreducible and $\overline{\rho}_i|_{G_{F(\zeta_l)}} \not\cong 
\overline{\rho}_j|_{G_{F(\zeta_l)}}$ if $i \neq j$. Moreover, 
$\overline{\rho}^\text{ss}$ is primitive (i.e.~not induced from any proper 
subgroup of $G_F$) and $\overline{\rho}^\text{ss}(G_{F})$ has no quotient of 
order $l$.
		\item $l > 3$ and $l \nmid n$.
	\end{enumerate}
	Then $\rho$ is automorphic: there exists an $\iota$-ordinary RACSDC automorphic representation $\Pi$ of $\GL_n(\bbA_F)$ such that $r_\iota(\Pi) \cong \rho$. 
\end{theorem}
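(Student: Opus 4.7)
The plan is to prove the theorem by a Taylor–Wiles–Kisin patching argument in the residually reducible setting, extending the method introduced by the third author in \cite{Tho16}. The central difficulty, compared to the residually irreducible case, is that one cannot deform a single residual representation $\overline{\rho}$; instead I would work with a deformation theory of pseudo-representations (equivalently, Cayley–Hamilton algebras) attached to $\overline{\rho}^{\text{ss}}$, and use the Steinberg hypothesis at $\wv_0$ to promote the universal pseudo-deformation to a genuine $\GL_n$-valued Galois representation over an appropriate localisation of the deformation ring. The ``Schur'' hypothesis that the $\overline{\rho}_i$ are pairwise distinct absolutely irreducible representations is essential here: it guarantees that the associated Cayley–Hamilton algebra has exactly $\num$ primitive idempotents modulo its Jacobson radical, and that the reducibility strata are controlled by explicit traces and extensions.

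Concretely, I would first set up the local deformation problems: an ordinary condition of weight $\lambda$ at places above $l$, consistent with (3) and with $\iota$-ordinarity of $\pi$ in (6); a Steinberg-type condition at $\wv_0$ matching the prescribed semisimplification $\rho|_{G_{F_{\wv_0}}}^{\text{ss}} \cong \bigoplus_{i=1}^n \psi \epsilon^{n-i}$; and a minimally ramified condition at the remaining ramified places. The Steinberg condition at $\wv_0$ plays the rigidifier role of \cite{Tho16}: it fixes a canonical ordering of the Jordan–H\"older constituents of any deformation, thereby cutting out a single irreducible component of the (otherwise highly reducible) Cayley–Hamilton deformation space. I would then form an ordinary Hida-theoretic Hecke algebra $\bbT$ acting on the ordinary completed cohomology of a definite unitary group with Steinberg level at $\wv_0$, and locate a maximal ideal $\ffrm$ corresponding to $\overline{\rho}^{\text{ss}}$ using the automorphic representation $\pi$ of hypothesis (6).

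Choosing Taylor–Wiles primes is permitted by the big-image conditions in (7): $F(\zeta_l) \not\subseteq \overline{F}^{\ker \ad \overline{\rho}^{\text{ss}}}$ supplies new Frobenius elements, while irreducibility and distinctness of the $\overline{\rho}_i|_{G_{F(\zeta_l)}}$ give the necessary control on the relevant (dual) Selmer groups, and primitivity together with the absence of an order-$l$ quotient ensures that the patched module $M_\infty$, produced over a power-series ring $S_\infty$ with compatible action of a patched deformation ring $R_\infty$, is concentrated in a single degree and satisfies the balanced numerics needed to conclude. The hard part, and the step where the proof must genuinely advance beyond \cite{Tho16}, is to show that the support of $M_\infty$ meets every irreducible component of $\Spec R_\infty$ passing through the point corresponding to $\rho$. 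For $\num = 2$ one can exploit a two-step filtration directly; for arbitrary $\num$ I would proceed by induction over the Cayley–Hamilton stratification of $\Spec R$ by reducibility type, using $l \nmid n$ to guarantee non-degeneracy of the relevant trace pairing and the lifting of idempotents through the nilradical, and using the Steinberg ordering at $\wv_0$ to identify the distinguished component containing $\rho$. Once this support statement is in hand, the desired $\iota$-ordinary RACSDC automorphic representation $\Pi$ with $r_\iota(\Pi) \cong \rho$ is extracted in the standard way from a characteristic-zero point of $\Spec \bbT[1/l]$ lying on that component.
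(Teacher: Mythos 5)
There is a genuine gap at exactly the point you flag as ``the hard part''. Your plan to show that the patched module meets every irreducible component through the point of $\rho$ by ``induction over the Cayley--Hamilton stratification of $\Spec R$ by reducibility type, using $l \nmid n$ to guarantee non-degeneracy of the relevant trace pairing and the lifting of idempotents through the nilradical'' is not an argument: no mechanism is given for transferring automorphy from the component seen by $\pi$ to the component containing $\rho$ when the two meet only in the reducible locus, and in the residually reducible setting this transfer is the whole difficulty. The paper's solution is quite different and quite specific: (i) an $R_\frp = \bbT_\frq$ statement localized at \emph{generic} dimension-one characteristic-$l$ primes (Theorem \ref{thm_application_of_Taylor-Wiles}), whose proof for arbitrary $\num$ rests on the new $\mu_2^\num$-action on $R^{\mathrm{univ}}_\cS$ and the identification $P_\cS = (R^{\mathrm{univ}}_\cS)^{\mu_2^\num}$ of Proposition \ref{prop:pseudoinvariants}, giving \'etaleness of $P_\cS \to R^{\mathrm{univ}}_\cS$ at irreducible primes; (ii) an explicit dimension bound on the reducible locus in any quotient finite over $\Lambda$ (Lemma \ref{lem_reducible_dimension_bound}), obtained from the \emph{determinant relation} $\chi_1(\Frob_{\wv})^{n_2} = \chi_2(\Frob_{\wv})^{n_1}$ forced by the Steinberg condition; and (iii) a connectedness-dimension argument (the proof of Theorem \ref{thm_main_argument}) showing that any two irreducible components of $R^{\mathrm{univ}}_{\cS_1}$ intersect in dimension large enough to contain a generic, potentially pro-automorphic dimension-one prime, so that potential pro-automorphy propagates from one minimal prime to all of them. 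None of these three ingredients appears in your proposal, and without them the support statement you need is unproved.

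Two further points. First, your assertion that the Steinberg condition at $\wv_0$ ``fixes a canonical ordering of the Jordan--H\"older constituents of any deformation, thereby cutting out a single irreducible component'' of the Cayley--Hamilton deformation space, and that it lets you ``promote the universal pseudo-deformation to a genuine $\GL_n$-valued representation'', is not substantiated and is not how the hypothesis is used: in the paper the locally Steinberg condition enters only through the dimension bound (ii) above and, at the end, through local--global compatibility to guarantee that the automorphic lift is irreducible; the universal object is a genuine deformation ring of the Schur (reducible, semisimple) $\overline{r}$, which exists without any promotion step, with the pseudodeformation image $P_\cS$ playing an auxiliary role. Second, the deduction of the main theorem is not direct: one must first perform a soluble base change to a carefully chosen CM field $L$ (a composite with a cyclic totally real extension of large odd degree, split at an auxiliary set of places preserving the image hypotheses) so that the numerical inequalities on $d_{L,0}$ and $d_{L,l}$ required by Theorem \ref{thm_main_argument} hold; your sketch contains no analogue of this step, and without it the dimension estimates underpinning the connectedness argument are not available.
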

Comparing this with \cite[Theorem 7.1]{Tho16}, we see that we now allow an arbitrary number of irreducible constituents, while also removing the requirement that the individual constituents are adequate (in the sense of \cite{Tho11}) and potentially automorphic. This assumption of potential automorphy was used in \cite{Tho16}, together with the Khare--Wintenberger method, to get a handle on the quotient of the universal deformation ring of $\overline{\rho}$ corresponding to reducible deformations. This made generalising \cite[Theorem 7.1]{Tho16} to the case where more than two irreducible constituents are allowed seem a formidable task: one would want to know that any given direct sum of irreducible constituents of $\overline{\rho}$ was potentially automorphic, and then perhaps use induction on the number of constituents to control the reducible locus.

The first main innovation in this paper that allows us to bypass this is the observation that by fully exploiting the `connectedness dimension' argument to prove $R = \bbT$ (which goes back to \cite{Ski99}, and appears in this paper in the proof of Theorem \ref{thm_main_argument}), one only needs to control the size of the reducible locus in quotients of the universal deformation ring which are known a priori to be finite over the Iwasawa algebra $\Lambda$. This can be done easily by hand using the `locally Steinberg' condition (as in \S \ref{subsec_dimension_bounds}). 

The second main innovation is a finer study of the universal deformation ring 
$R^{\text{univ}}$ of a (reducible but) Schur residual representation. We show 
that if the residual representation has $\num$ absolutely irreducible 
constituents, then there is an action of a group $\mu_2^\num$ on 
$R^{\text{univ}}$, and identify the invariant subring 
$(R^{\text{univ}})^{\mu_2^\num}$ with the subring topologically generated by 
the 
traces of Frobenius elements (which can also be characterised as the image $P$ 
of the canonical map to $R^{\text{univ}}$ from the universal pseudodeformation 
ring). This leads to a neat proof that the map $P \to R^{\text{univ}}$ is 
\'etale at prime ideals corresponding to irreducible deformations of 
$\overline{\rho}$. 

We now describe the organisation of this paper. Since it is naturally a continuation of \cite{Tho16}, we maintain the same notation and use several results and constructions from that paper as black boxes. We begin in \S \ref{sec_determinants} and \ref{sec_deformation_theory} by extending several results from \cite{Tho16} about the relation between deformations and pseudodeformations to the case where $\overline{\rho}$ is permitted to have more than 2 irreducible constituents. We also make the above-mentioned study of the dimension of the locus of reducible deformations.

In \S \ref{sec_automorphic_forms} we recall from \cite{Tho16} the definition of 
the unitary group automorphic forms and Hecke algebras that we use, and state 
the $\bbT_\frq = R_\frp$ type result proved in that paper (here $\frp$ denotes 
a dimension 1, characteristic $l$ prime of $R$ with good properties, in 
particular that the associated representation to $\GL_n(\Frac R / \frp)$ is 
absolutely irreducible). In \S \ref{sec_mainargument} we carry out the main 
argument, based on the notion of connectedness dimension, which is described 
above. Finally, in \S \ref{sec_main_theorem} we deduce Theorem 
\ref{thm_intro_theorem}, following a simplified version of the argument in 
\cite[\S 7]{Tho16} that no longer makes reference to potential automorphy.

\subsection*{Acknowledgments}

We would like to thank the organizers of the workshop Deformation Theory, 
Completed Cohomology, Leopoldt Conjecture and K-theory at CIRM in December 
2016, where some of the ideas of this paper were first discussed. We also thank 
the anonymous referee for their helpful comments.

This work was partially conducted during the period that J.T. served as a Clay 
Research Fellow. J.T.'s work received funding from the European Research 
Council (ERC) under the European Union's Horizon 2020 research and innovation 
programme (grant agreement No 714405). J.T. would like to thank Lue Pan for 
pointing out an error in \cite{Tho16}, which is addressed here in \S 
\ref{subsec_erratum} below.

P.A. was supported by Simons Foundation Collaboration Grant 527275 and NSF grant DMS-1902155.

\subsection{Notation}\label{subsec_notation}

We use the same notation and normalizations for Galois groups, class field theory, and local Langlands correspondences as in \cite[Notation]{Tho16}. Rather than repeat this verbatim here we invite the reader to refer to that paper for more details. We do note the convention that if $R$ is a ring and $P$ is a prime ideal of $R$, then $R_{(P)}$ denotes the localisation of $R$ at $P$ and $R_P$ denotes the completion of the localisation. 

We recall that $\bbZ_n^+ \subset \bbZ^n$ denote the set of tuples $\lambda = (\lambda_1, \dots, \lambda_n)$ of integers such that $\lambda_1 \geq \dots \geq \lambda_n$. It is identified in a standard way with the set of highest weights of $\GL_n$. If $F$ is a number field and $\lambda = (\lambda_\tau) \in (\bbZ_n^+)^{\Hom(F, \bbC)}$, then we write $\Xi_\lambda$ for the algebraic representation of $\GL_n(F \otimes_\bbQ \bbC) = \prod_{\tau \in \Hom(F, \bbC)} \GL_n(\bbC)$ of highest weight $\lambda$. If $\pi$ is an automorphic representation of $\GL_n(\bbA_F)$, we say that $\pi$ is regular algebraic of weight $\lambda$ if $\pi_\infty$ has the same infinitesimal character as the dual $\Xi_\lambda^\vee$.

Let $F$ be a CM field (i.e.\ a totally imaginary quadratic extension of a 
totally real field $F^+$). We always write $c \in \Aut(F)$ for complex 
conjugation. We say that an automorphic representation $\pi$ of $\GL_n(\bbA_F)$ 
is conjugate self-dual if there is an isomorphism $\pi^c \cong \pi^\vee$. We 
use the acronym RACSDC to denote regular algebraic, conjugate self-dual, 
cuspidal. If $\pi$ is a RACSDC automorphic representation of $\GL_n(\bbA_F)$, 
and $\iota : \overline{\bbQ}_l \to \bbC$ is an isomorphism (for some prime 
$l$), then there exists an associated Galois representation $r_\iota(\pi) : G_F 
\to \GL_n(\overline{\bbQ}_l)$, characterised up to isomorphism by the 
requirement of compatibility with the local Langlands correspondence at each 
finite place of $F$; see \cite[Theorem 2.2]{Tho16} for a reference. We say that 
a representation $\rho : G_F \to \GL_n(\overline{\bbQ}_l)$ is automorphic if 
there exists a choice of $\iota$ and RACSDC $\pi$ such that $\rho \cong 
r_\iota(\pi)$.

One can define what it means for a RACSDC automorphic representation $\pi$ to be $\iota$-ordinary (see \cite[Lemma 2.3]{Tho16}; it means that the eigenvalues of certain Hecke operators, a priori $l$-adic integers, are in fact $l$-adic units). If $\mu \in (\bbZ^n_+)^{\Hom(F, \overline{\bbQ}_l)}$, we say (following \cite[Definition 2.5]{Tho16}) that a representation $\rho : G_F \to \GL_n(\overline{\bbQ}_l)$ is ordinary of weight $\mu$ if for each place $v | l$ of $F$, there is an isomorphism
\[ \rho|_{G_{F_v}} \sim  \left(\begin{array}{cccc} \psi_1 & \ast & \ast & \ast \\
0 & \psi_2 & \ast & \ast \\
\vdots & \ddots & \ddots & \ast \\
0 & \dots & 0 & \psi_n \end{array}\right), \]
where $\psi_i : G_{F_v} \rightarrow \barQl^\times$ is a continuous character satisfying the identity
\[ \psi_i(\sigma) = \prod_{\tau : F_v \hookrightarrow \barQl} \tau(\Art_{F_v}^{-1}(\sigma))^{-( \mu_{\tau, n - i + 1} + i - 1)} \]
for all $\sigma$ in a suitable open subgroup of $I_{F_v}$. An important result (\cite[Theorem 2.4]{Tho16}) is that if $\pi$ is RACSDC of weight $\lambda$ and $\iota$-ordinary, then $r_\iota(\pi)$ is ordinary of weight $\iota \lambda$, where by definition $(\iota \lambda)_\tau = \lambda_{\iota \tau}$.

\section{Determinants}\label{sec_determinants}

We first give the definition of a determinant from \cite{Che11}. We recall that 
if $A$ is a ring and $M, N$ are $A$-modules, then an $A$-polynomial law $F : M 
\to N$ is a natural transformation $F : h_M \to h_N$, where $h_M : A$-alg $\to$ 
Sets is the functor $h_M(B) = M \otimes_A B$. The $A$-polynomial law $F$ is 
called homogeneous of degree $n \geq 1$ if for all $b \in B$, $x \in M 
\otimes_A B$, we have $F_B(bx) = b^n F_B(x)$.
\begin{definition}
	Let $A$ be a ring and let $R$ be an $A$-algebra. An $A$-valued determinant 
	of $R$ of dimension $n \geq 1$ is a multiplicative $A$-polynomial law $D : 
	R \to A$ which is homogeneous of degree $n$. 
\end{definition}
If $D$ is a determinant, then there are associated polynomial laws $\Lambda_i : R \to A$, $i = 0, \dots, n$, given by the formulae
\[ D(t - r) = \sum_{i=0}^n (-1)^i  \Lambda_i(r)  t^{n-i} \]
for all $r \in R \otimes_A B$. We define the characteristic polynomial $A$-polynomial law $\chi : R \to R$ by the formula $\chi(r) = \sum_{i=0}^n (-1)^i  \Lambda_i(r) r^{n-i}$ ($r \in R \otimes_A B$). We write $\operatorname{CH}(D)$ for the two-sided ideal of $R$ generated by the coefficients of $\chi(r_1 t_1 + \cdots + r_m t_m) \in R[t_1, \ldots, t_m]$, for all $m \ge 1$ and $r_1,\ldots,r_m \in R$. We have $\operatorname{CH}(D) \subseteq \ker(D)$ (\cite[Lemma~1.21]{Che11}). The determinant $D$ is said to be Cayley--Hamilton if $\operatorname{CH}(D) = 0$, equivalently if $\chi = 0$ (i.e. $\chi$ is the zero $A$-polynomial law). 

We next recall the definition of a generalized matrix algebra \cite[Definition~1.3.1]{BCAst}.
\begin{definition}\label{def:GMA}
Let $A$ be a ring and let $R$ be an $A$-algebra. 
We say $R$ is a \emph{generalized matrix algebra} of type $(n_1,\ldots,n_\num)$ 
if it is equipped with 
the following data:
\begin{enumerate}
\item a family of orthogonal idempotents $e_1,\ldots, e_\num$ with $e_1+\cdots 
+e_\num = 1$, and
\item for each $1\le i \le \num$, an $A$-algebra isomorphism $\psi_i \colon e_i 
R e_i \rightarrow M_{n_i}(A)$,
\end{enumerate}
such that the trace map $T \colon R \rightarrow A$ defined by $T(x) = 
\sum_{i=1}^\num \tr\psi_i(e_ix e_i)$ satisfies $T(xy) = T(yx)$ for all $x,y \in 
R$.
We refer to the data $\mathcal{E} = \{e_i,\psi_i, 1\le i \le \num\}$ as the 
\emph{data of idempotents} of $R$.
\end{definition}

\begin{construction}\label{GMAconstruction}
We recall the structure of generalized matrix algebras from \cite[\S1.3.2]{BCAst}. 
Let $R$ be a generalized matrix algebra of type $(n_1,\ldots,n_\num)$ with data 
of idempotents $\mathcal{E} = \{e_i,\psi_i, 1\le i \le \num\}$. 
For each $1\le i \le \num$, let $E_i \in e_i R e_i$ be the unique element such 
that $\psi_i(E_i)$ is the element of $M_{n_i}(A)$ whose row $1$, column $1$ 
entry is $1$, and all other entries are $0$. 
We set $\mathcal{A}_{i,j} = E_i R E_j$ for each $1\le i,j\le \num$. 
Note that $\mathcal{A}_{i,j}\mathcal{A}_{j,k} \subseteq \mathcal{A}_{i,k}$ for 
each $1\le i,j,k\le \num$, and the trace map $T$ induces an isomorphism 
$\mathcal{A}_{i,i} \cong A$ for each $1\le i \le \num$.
Via this isomorphism, we will tacitly view $\mathcal{A}_{i,j}\mathcal{A}_{j,i}$ 
as an ideal in $A$ for each $1\le i,j\le \num$.
With this multiplication, there is an isomorphism of $A$-algebras
	\begin{equation}\label{eqn:GMA}
    R \cong \begin{pmatrix}
    M_{n_1}(A) & M_{n_1,n_2}(\mathcal{A}_{1,2}) & \cdots & 
    M_{n_1,n_{\num}}(\mathcal{A}_{1,{\num}})\\
    M_{n_2,n_1}(\mathcal{A}_{2,1}) & M_{n_2}(A) & \cdots & 
    M_{n_2,n_{\num}}(\mathcal{A}_{2,{\num}})\\
    \vdots & \vdots & \ddots & \vdots \\
    M_{n_{\num},n_1}(\mathcal{A}_{{\num},1}) & 
    M_{n_{\num},n_2}(\mathcal{A}_{{\num},2}) & \cdots & M_{n_{\num}}(A)
    \end{pmatrix}.
    \end{equation}
\end{construction}
The following result of Chenevier allows us to use the above structure when studying determinants.
\begin{theorem}\label{thm:detGMA}
Let $A$ be a Henselian local ring with residue field $k$, let $R$ be an 
$A$-algebra, and let $D \colon R \rightarrow A$ be a Cayley--Hamilton 
determinant. Suppose there exist surjective and pair-wise non-conjugate 
$k$-algebra homomorphisms $\overline{\rho}_i : R \to M_{n_i}(k)$ such that 
$\overline{D} = \prod_{i=1}^\num (\det \circ \overline{\rho}_i)$, where 
$\overline{D} = D \otimes_R k$. 

Then there is a data of idempotents $\mathcal{E} = \{e_i,\psi_i, 1\le i \le 
\num\}$ for which $R$ is a generalized matrix algebra and such that $\psi_i 
\otimes_A k = \overline{\rho}_i|_{e_i R e_i}$. 
Any two such data are conjugate by an element of $R^\times$.
\end{theorem}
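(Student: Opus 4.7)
The plan is to work backward from the semisimple residual picture: first show that $R$ surjects through orthogonal idempotents onto the product $\prod_i M_{n_i}(k)$ residually, then lift the resulting idempotents to $R$ using a Hensel-type argument that leverages the Cayley--Hamilton hypothesis, and finally upgrade the local pieces $e_i R e_i$ to full matrix algebras $M_{n_i}(A)$.

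First I would analyze the residual algebra $\overline{R} = R \otimes_A k$. Since $D$ is Cayley--Hamilton, so is $\overline{D}$, and the hypothesis factors $\overline{D} = \prod_i \det\circ\overline{\rho}_i$ as a product of non-conjugate irreducibles. By Chenevier's generalization of the Burnside/Jacobson density theorem (for Cayley--Hamilton determinants), the combined map $\overline{\rho} = (\overline{\rho}_1,\dots,\overline{\rho}_\num)\colon \overline{R} \to \prod_{i=1}^\num M_{n_i}(k)$ is surjective, and its kernel is nilpotent (being contained in $\ker \overline{D}$, which for a Cayley--Hamilton determinant over a field cuts out precisely the radical once the semisimple quotient has the expected irreducible factors). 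Inside $\prod_i M_{n_i}(k)$, the obvious orthogonal idempotents $\overline{f}_i$ (identity in the $i$-th factor, zero elsewhere) lift to orthogonal idempotents $\overline{e}_1,\dots,\overline{e}_\num \in \overline{R}$ summing to $1$, because idempotents lift along any surjection with nil kernel.

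Next I would lift the $\overline{e}_i$ to orthogonal idempotents $e_i \in R$ summing to $1$. The key point is that since $D$ is Cayley--Hamilton of dimension $n$, every element of $R$ satisfies a monic polynomial of degree $n$ over $A$, so $R$ is integral over $A$ in a strong sense. Combined with $A$ being Henselian local, this allows a Hensel-type lifting: any element $r \in R$ reducing to $0$ or $1$ modulo $\mathfrak{m}_A R$ and satisfying $r^2 - r$ integral over $\mathfrak{m}_A$ can be adjusted within its $A$-subalgebra to an idempotent. One lifts $\overline{e}_1$ first, then replaces the problem by the corresponding idempotent slice, and iterates; orthogonality is preserved because after lifting $e_1$, one works inside $(1-e_1)R(1-e_1)$, which inherits a Cayley--Hamilton determinant of dimension $n - n_1$. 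This is the step I expect to be the main obstacle, because the lifting must be done consistently with the determinant structure and the integrality must be carefully extracted from the Cayley--Hamilton property (not merely from finiteness, which is not assumed).

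With orthogonal idempotents $e_i$ in hand lifting the $\overline{f}_i$, the determinant $D$ restricts (after appropriate normalization) to a Cayley--Hamilton determinant on each $e_i R e_i$ of dimension $n_i$ whose residual determinant is $\det \circ \overline{\rho}_i$, an irreducible representation. Applying the already-known case of Chenevier's theorem for irreducible residual representation (where the GMA is a single matrix algebra), together with the Henselian lifting of matrix units from $M_{n_i}(k)$ along the surjection $e_i R e_i \twoheadrightarrow M_{n_i}(k)$, yields an $A$-algebra isomorphism $\psi_i\colon e_i R e_i \xrightarrow{\sim} M_{n_i}(A)$ reducing to $\overline{\rho}_i|_{e_i R e_i}$. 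The trace identity $T(xy) = T(yx)$ follows from the corresponding identity for the trace associated to $D$ (which is just $\Lambda_1$ and is a trace in the polynomial-law sense), and this is automatic because $D$ is multiplicative and symmetric traces arise from it.

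Finally, for uniqueness up to conjugation, I would argue that given two data of idempotents $\{e_i,\psi_i\}$ and $\{e_i',\psi_i'\}$ both reducing to $\overline{\rho}_i|_{e_i R e_i}$, the standard lifting-of-idempotents-is-unique-up-to-inner-automorphism argument (again using that $A$ is Henselian and that $R$ is integral over $A$) produces a unit $u \in R^\times$ conjugating $\{e_i\}$ to $\{e_i'\}$; after this adjustment, the resulting isomorphisms $\psi_i$ and $\psi_i'$ differ by an inner automorphism of $M_{n_i}(A)$, which by Skolem--Noether (and Henselian lifting of the conjugating unit from $\GL_{n_i}(k)$) can be realized by a further inner automorphism of $R$ supported on $e_i R e_i$. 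Combining these inner automorphisms gives the required single element of $R^\times$ conjugating one datum to the other.
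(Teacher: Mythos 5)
Your outline is correct, but it is worth separating its two halves, because they relate differently to the paper's proof. For existence, the paper simply quotes Chenevier's Theorem~2.22 and its proof; what you write is essentially a reconstruction of that proof rather than a new route: surjectivity of $R\to\prod_i M_{n_i}(k)$ with nil kernel (nil, not necessarily nilpotent, but nil is all idempotent lifting needs), lifting of a complete orthogonal family of idempotents using the integrality supplied by the Cayley--Hamilton identity together with Henselianness of $A$ (working inside the finite commutative subalgebras $A[r]$, then passing to $(1-e_1)R(1-e_1)$ with its restricted determinant of dimension $n-n_1$), and the residually irreducible case to identify each $e_iRe_i$ with $M_{n_i}(A)$; the only glossed point is the trace axiom, which requires checking that $T$ coincides with the first coefficient $\Lambda_1$ of $D$ (diagonal blocks give $\tr\circ\psi_i$, and cross blocks contribute $0$ because $\Lambda_1(ab)=\Lambda_1(ba)$). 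Where you genuinely diverge is uniqueness: the paper invokes the Krull--Schmidt--Azumaya theorem to conjugate the two families of idempotents and then Skolem--Noether, whereas you conjugate directly. Your argument does work, but the mechanism is not just ``Henselian plus integral''; you should make explicit that the normalization $\psi_i\otimes_A k=\overline{\rho}_i|_{e_iRe_i}$ forces $\overline{\rho}_j(e_i)=\delta_{ij}$ (the $\overline{\rho}_j(e_i)$ are orthogonal idempotents summing to $1$ with the $j=i$ one already the identity), so $e_i\equiv e_i'$ modulo $J:=\ker\bigl(R\to\prod_j M_{n_j}(k)\bigr)$, and that $J$ lies in the Jacobson radical because any $x\in J$ reduces into the nil ideal $\ker\overline{D}$, whence $D(1+x)\in A^\times$ and Cayley--Hamilton makes $1+x$ a unit; then $u=\sum_i e_i'e_i\equiv 1$ is a unit with $ue_iu^{-1}=e_i'$, and the Skolem--Noether adjustment inside $\oplus_i e_i'Re_i'$ (automorphisms of $M_{n_i}(A)$ over a local ring are inner) finishes exactly as in the paper. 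Your route buys a self-contained, elementary uniqueness proof that exploits the fixed residual data; the paper's KSA route buys brevity and needs only the isomorphism type of the summands $Re_i$, not any congruence between the two idempotent families.
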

We note that  the assumptions of Theorem \ref{thm:detGMA} say that $D$ is residually split and multiplicity-free, in the sense of \cite[Definition 2.19]{Che11}.
\begin{proof}
The existence of such a data of idempotents $\mathcal{E} = \{e_i,\psi_i,1\le i 
\le \num\}$ is contained in \cite[Theorem~2.22]{Che11} and its proof. 
The statement that two such data are conjugate is exactly as in \cite[Lemma~1.4.3]{BCAst}. 
Namely, if $\mathcal{E}' = \{e_i',\psi_i',1\le i \le \num\}$ is another such 
choice, 
then since $\End_R(R e_i) \cong M_{n_i}(A) \cong \End_R(R e_i')$ are local 
rings, the Krull--Schmidt--Azumaya Theorem 
\cite[Theorem~6.12]{curtis-reiner-methods} (see also \cite[Remark~6.14 and 
Chapter 6, Exercise 14]{curtis-reiner-methods}) implies there is $x\in 
R^\times$ such that $x e_i x^{-1} = e_i'$ for each $1\le i \le \num$. 
By Skolem--Noether, we can adjust $x$ by an element of $(\oplus_{i=1}^\num e_i 
R e_i)^\times$ 
so that it further satisfies $x\psi_i x^{-1} = \psi_i'$.
\end{proof}

We now show that the reducibility ideals of \cite[Proposition~1.5.1]{BCAst} and their basic properties carry over for determinants (so without having to assume that $n!$ is invertible in $A$).

\begin{proposition}\label{prop:reducibleideal}
Let $A$ be a Henselian local ring with residue field $k$, let $R$ be an $A$-algebra, and let $D \colon R \rightarrow A$ be a determinant. 
Assume that $\overline{D} = D\otimes_A k \colon R\otimes_A k \rightarrow k$ is split and multiplicity free. 
Write $\overline{D} = \prod_{i=1}^\num \overline{D}_i$, with each 
$\overline{D}_i$ absolutely irreducible of dimension $n_i$.

Let $\mathcal{P} = (\mathcal{P}_1,\ldots,\mathcal{P}_s)$ be a partition of 
$\{1,\ldots,\num\}$. 
There is an ideal $I_\mathcal{P}$ of $A$ such that an ideal $J$ of $A$ satisfies $I_\mathcal{P} \subseteq J$ if and only if there are determinants $D_1,\ldots,D_s \colon R\otimes_A A/J \rightarrow A/J$ such that $D\otimes_A A/J = \prod_{m=1}^s D_m$ and $D_m \otimes_A k = \prod_{i \in \mathcal{P}_m} \overline{D}_i$ for each $1\le m \le s$.
If this property holds, then $D_1,\ldots,D_s$ are uniquely determined and 
satisfy $\ker (D\otimes_A A/J) \subseteq \ker (D_m)$.

Moreover, let $\mathcal{J}$ be a two sided ideal of $R$ with $\operatorname{CH}(D) \subseteq \mathcal{J} \subseteq \ker(D)$ and let $\mathcal{A}_{i,j}$ be the $A$-modules as in Construction~\ref{GMAconstruction} for a choice of data of idempotents as in Theorem~\ref{thm:detGMA} applied to $R/\mathcal{J}$.
Then $I_\mathcal{P} = \sum_{i,j} \mathcal{A}_{i,j}\mathcal{A}_{j,i}$ where the sum is over all pairs $i,j$ not belonging to the same $\mathcal{P}_m\in \mathcal{P}$.
\end{proposition}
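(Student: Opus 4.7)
The plan is to follow the approach of \cite[Proposition~1.5.1]{BCAst}, now using Theorem~\ref{thm:detGMA} to supply the GMA structure; this works in Chenevier's determinant setting without requiring $n!$ to be invertible in $A$.

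The first step is to reduce to the Cayley--Hamilton case by replacing $R$ with $R/\mathcal{J}$ for $\mathcal{J} = \operatorname{CH}(D)$; since $\mathcal{J} \subseteq \ker(D)$ by \cite[Lemma~1.21]{Che11}, every candidate factorization $D \otimes_A A/J = \prod_m D_m$ factors through this quotient. Theorem~\ref{thm:detGMA} then produces a data of idempotents $\mathcal{E} = \{e_i, \psi_i\}$ realising (\ref{eqn:GMA}). I would take as definition
\[
I_{\mathcal{P}} := \sum_{(i,j)} \mathcal{A}_{i,j}\mathcal{A}_{j,i} \subseteq A,
\]
with the sum over pairs $(i,j)$ in distinct parts of $\mathcal{P}$. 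Independence from the choice of $\mathcal{E}$ is immediate from the uniqueness in Theorem~\ref{thm:detGMA}: $R^\times$-conjugation rescales $\mathcal{A}_{i,j}$ and $\mathcal{A}_{j,i}$ by reciprocal units, preserving the ideal $\mathcal{A}_{i,j}\mathcal{A}_{j,i}$. The same formula applied to any $\mathcal{J}$ with $\operatorname{CH}(D) \subseteq \mathcal{J} \subseteq \ker(D)$ yields the same $I_{\mathcal{P}}$, since the surjection $R/\operatorname{CH}(D) \twoheadrightarrow R/\mathcal{J}$ compatibly transports GMA data.

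For the forward direction, assume $I_{\mathcal{P}} \subseteq J$. Set $\overline{A} = A/J$, $\overline{R} = (R/\mathcal{J}) \otimes_A \overline{A}$, and $f_m = \sum_{i \in \mathcal{P}_m} e_i$. For a cycle $(i_1, \ldots, i_k)$ of indices visiting more than one part, the product $a_1 \cdots a_k$ of elements $a_j \in \mathcal{A}_{i_j, i_{j+1}}$ lies in $\mathcal{A}_{i_1, i_1} \cong A$, and under the trace identification can be cyclically rotated using $T(xy) = T(yx)$ until it starts at some $s$ with $(i_s, i_{s+1})$ cross-partition; regrouping as $a_s \cdot (a_{s+1} \cdots a_{s-1})$ gives containment in $\mathcal{A}_{i_s, i_{s+1}}\mathcal{A}_{i_{s+1}, i_s} \subseteq I_{\mathcal{P}}$. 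So modulo $J$ every cross-part cycle in the blockwise Leibniz expansion of the characteristic polynomial vanishes, and this characteristic polynomial factors as a product indexed by the parts of $\mathcal{P}$. One thereby obtains Cayley--Hamilton determinants $D_m$ on $f_m \overline{R} f_m$ of dimension $\sum_{i \in \mathcal{P}_m} n_i$, extended to $\overline{R}$ via $D_m(x) := D_m(f_m x f_m + (1-f_m))$, giving the identity $D \otimes_A \overline{A} = \prod_m D_m$.

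For the converse, given the $D_m$, the product $\prod_m D_m$ is a Cayley--Hamilton determinant on $\overline{R}$ whose residual factorization over $k$ groups $\overline{D}_1, \ldots, \overline{D}_\num$ according to $\mathcal{P}$; a second application of Theorem~\ref{thm:detGMA} yields a coarser data of idempotents $\{f_m\}$ in $\overline{R}$, $\overline{R}^\times$-conjugate to $\{\sum_{i \in \mathcal{P}_m} \overline{e_i}\}$, and comparing the fine and coarse GMA structures forces every cross-partition $\mathcal{A}_{i,j}\mathcal{A}_{j,i}$ to vanish in $\overline{A}$, i.e.\ $I_{\mathcal{P}} \subseteq J$. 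Uniqueness of the $D_m$ then follows from residual multiplicity-freeness together with the observation that $D_m$ is determined by its restriction to $f_m \overline{R} f_m$, and the inclusion $\ker(D \otimes_A A/J) \subseteq \ker(D_m)$ is formal from the product identity. The principal obstacle is the combinatorial step at the heart of the forward direction, namely isolating a single cross-partition factor $\mathcal{A}_{i_s, i_{s+1}}\mathcal{A}_{i_{s+1}, i_s}$ inside an arbitrary long cross-part cycle; this is precisely the cyclic-rotation manipulation of Bella\"{\i}che--Chenevier, which carries over verbatim once the GMA structure of Theorem~\ref{thm:detGMA} is in hand.
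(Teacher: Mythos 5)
Your overall skeleton (define $I_{\mathcal{P}}=\sum \mathcal{A}_{i,j}\mathcal{A}_{j,i}$ over cross-partition pairs via Theorem~\ref{thm:detGMA}, check independence, then prove the two implications) matches the intended argument, but both implications as you present them have genuine gaps, and they occur exactly at the point where the determinant setting differs from the Bella\"iche--Chenevier trace setting. In the forward direction you argue that ``every cross-part cycle in the blockwise Leibniz expansion of the characteristic polynomial vanishes, and this characteristic polynomial factors.'' For a polynomial-law determinant $D$ on a generalized matrix algebra there is no such Leibniz expansion: $D$ is not the matrix determinant of the presentation \eqref{eqn:GMA}, and it is not determined by the trace unless one can divide by $n!$ --- removing that hypothesis is the whole point of the proposition. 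So the claim that the Bella\"iche--Chenevier cyclic-rotation manipulation ``carries over verbatim'' is precisely the unproved step. Relatedly, your extension $D_m(x):=D_{f_m}(f_mxf_m)$ is not visibly multiplicative, since $x\mapsto f_mxf_m$ is not an algebra map; multiplicativity needs to know that the cross blocks are killed by $D$. The correct route (the one the paper takes) is to show directly that $e_ixe_j\in\ker(D)$ for cross-partition $i,j$: from $\mathcal{A}_{i,j}\mathcal{A}_{j,i}=0$ one gets $e_ixe_jy=\sum_{l\neq i}e_ixe_jye_l$, then $D(1+e_ixe_jy)=1$ by the invariance $D(1+xy)=D(1+yx)$ \cite[Lemma~1.12(i)]{Che11}, and \cite[Lemma~1.19]{Che11} gives $e_ixe_j\in\ker(D)$. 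Then $R/\ker(D)\cong\prod_m f_m(R/\ker D)f_m$ is an honest product of algebras and \cite[Lemma~2.4]{Che11} yields $D=\prod_m D_m$ with $D_m(x)=\widetilde D(x+1-f_m)$, now legitimately multiplicative.

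The converse also needs repair. You invoke ``a second application of Theorem~\ref{thm:detGMA}'' to produce a coarser data of idempotents adapted to the factorization, but that theorem requires the residual factors to be (surjections onto matrix algebras coming from) absolutely irreducible representations, and the grouped factors $\prod_{i\in\mathcal{P}_m}\overline D_i$ are not irreducible; and even granting some coarser structure, ``comparing the fine and coarse GMA structures'' is not an argument that the cross products vanish. The actual content is: reduce to showing $\tr(f_mRf_{m'}Rf_m)=0$ for $m\neq m'$ (this sum of traces equals $I_{\mathcal{P}}$), and for that show $f_{m'}\in\ker(D_m)$. This is done by a characteristic-polynomial computation: $D_{m,1-f_m}$ has dimension $0$ by \cite[Lemma~2.4(2)]{Che11} because $D_{m,f_m}$ already has full dimension $d_m$ residually, whence $D_m(t-f_{m'})=t^{d_m}$ and $f_{m'}=f_{m'}^{d_m}\in\operatorname{CH}(D_m)\subseteq\ker(D_m)$. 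Note also that your uniqueness claim (``$D_m$ is determined by its restriction to $f_m\overline Rf_m$'') itself presupposes $f_{m'}\in\ker(D_m)$, so it cannot be quoted as an ``observation'' independent of this computation. Finally, your justification of independence of $I_{\mathcal{P}}$ from the data of idempotents (``conjugation rescales $\mathcal{A}_{i,j}$ and $\mathcal{A}_{j,i}$ by reciprocal units'') is imprecise --- the correct statement is that the ideals $\tr(\mathcal{A}_{i,j}\mathcal{A}_{j,i})\subseteq A$ are unchanged because the trace is conjugation-invariant, and independence of the choice of $\mathcal{J}$ uses that $D$ factors through $R/\ker(D)$ compatibly with traces.
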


\begin{proof}
We follow the proof of \cite[Proposition~1.5.1]{BCAst} closely.
Choose a two sided ideal $\mathcal{J}$ of $R$ with $\operatorname{CH}(D) \subseteq \mathcal{J} \subseteq \ker(D)$, and data of idempotents $\mathcal{E}$ for $R/\mathcal{J}$ as in Theorem~\ref{thm:detGMA}. 
We let $\mathcal{A}_{i,j}$ be as in Construction~\ref{GMAconstruction}, and define $I_\mathcal{P} = \sum_{i,j} \mathcal{A}_{i,j}\mathcal{A}_{j,i}$ where the sum is over all pairs $i,j$ not belonging to the same $\mathcal{P}_m\in \mathcal{P}$.
Since another such choice of the data of idempotents is conjugate by an element of $(R/\mathcal{J})^\times$, the ideal $I_{\mathcal{P}}$ does not depend on the choice of $\mathcal{E}$. 
To see that it is independent of $\mathcal{J}$, first note that $D$ further factors through a surjection $\psi \colon R/\mathcal{J} \rightarrow R/\ker(D)$.
Under this surjection, the data of idempotents $\mathcal{E}$ is sent to a data of idempotents for $R/\ker(D)$, and $\tr(\psi(\mathcal{A}_{i,j})\psi(\mathcal{A}_{j,i})) = \tr(\mathcal{A}_{i,j}\mathcal{A}_{j,i})$ since $\tr\circ \psi = \tr$. 

We can now replace $R$ with $R/\operatorname{CH}(D)$ and assume that $D$ is Cayley--Hamilton. 
Since $\operatorname{CH}(D)$ is stable under base change, it suffices to show that $I_{\mathcal{P}} = 0$ if and only if there are determinants $D_1,\ldots,D_s \colon R\rightarrow A$ such that $D = \prod_{m=1}^s D_m$ and $D_m \otimes_A k_A = \prod_{i \in \mathcal{P}_m} \overline{D}_i$ for each $1\le m \le s$, and that if this happens, then $D_1,\ldots,D_s$ are uniquely determined.
Fix a data of idempotents $\mathcal{E} = \{e_i,\psi_i,1\le i \le \num\}$ for 
$R$ as in Theorem~\ref{thm:detGMA}, and let the notation be as in 
Construction~\ref{GMAconstruction}. 
For each $1\le m \le s$, we set $f_m = \sum_{i\in \mathcal{P}_m} e_i$.
Then $1 = f_1+\cdots + f_s$ is a decomposition into orthogonal idempotents. 

First assume that $I_{\mathcal{P}} = 0$. 
Let $\widetilde{D}$ denote the $A$-valued determinant on $R/\ker(D)$ arising from $D$. 
Fix $x \in R$, an $A$-algebra $B$, and $y \in R\otimes_A B$. 
If $1\le i,j\le \num$ do not belong to the same $\mathcal{P}_m\in \mathcal{P}$,
then using the algebra structure as in \eqref{eqn:GMA} and the fact that $\mathcal{A}_{i,j} \mathcal{A}_{j,i} = 0$, we have $e_i x e_j y = \sum_{l\ne i} e_i x e_j y e_l$, and \cite[Lemma~1.12(i)]{Che11} gives
	\[
    D(1 + e_i x e_ j y) = D(1+\sum_{l\ne i} e_i x e_j y e_l) = D(1+\sum_{l\ne i} x e_j y e_l e_i) = D(1) = 1.
    \]
By \cite[Lemma~1.19]{Che11}, $e_i x e_j \in \ker(D)$ for all $x\in R$ and all $i,j$ that do not belong to the same $\mathcal{P}_m\in \mathcal{P}$.
We then have an isomorphism of $A$-algebras $R/\ker(D) \cong \prod_{m=1}^s f_m 
(R/\ker(D)) f_m$ and \cite[Lemma~2.4]{Che11} gives $D = \prod_{m=1}^s D_m$, 
where $D_m \colon R \rightarrow A$ is the composite of the surjection $R 
\rightarrow f_m (R/\ker(D)) f_m$ with the determinant $\widetilde{D}_m \colon 
f_m (R/\ker(D)) f_m \rightarrow A$ given by $x\mapsto \widetilde{D}(x + 
1-f_m)$. 
It is immediate that $D_m \otimes_A k_A = \prod_{i \in \mathcal{P}_m} \overline{D}_i$ for each $1\le m \le s$.

Now assume that there are determinants $D_1,\ldots,D_s \colon R\rightarrow A$ such that $D = \prod_{m=1}^s D_m$ and $D_m \otimes_A k = \prod_{i \in \mathcal{P}_m} \overline{D}_i$ for each $1\le m \le s$. 
The determinants $D_m$ have dimension $d_m := \sum_{i\in \mathcal{P}_m} n_i$.
The trace map yields an equality
	\[
    \sum_{1\le m\ne m' \le s} \tr(f_m R f_{m'}R f_m) = I_{\mathcal{P}}.
    \]
So to show $I_{\mathcal{P}} = 0$, it suffices to show that $\tr(f_m R f_{m'} R f_m) = 0$ for $m\ne m'$.
For this, it suffices to show that $f_{m'} \in \ker(D_m)$ for any $m\ne m'$, since this implies that $f_m R f_{m'} \in \ker(D_l)$ for any $1\le l\le s$, hence
    \[
     D(1+tf_m R f_{m'} R f_m) = \prod_{l=1}^s D_l(1+tf_m R f_{m'} R f_m) = 1.
    \]

For any idempotent $f$ of $R$, we have the determinant $D_{m,f} \colon f R f \rightarrow A$ given by $D_{m,f}(x) = D_m(x+1-f)$.
When $f = f_m$,
	\[
    D_{m,f_m} \otimes_A k = \prod_{i\in \mathcal{P}_m} \overline{D}_{i,f_m} = \prod_{i\in \mathcal{P}_m}\overline{D}_{i,e_i}
    \]
has dimension $d_m$. 
Then \cite[Lemma~2.4(2)]{Che11} implies that $D_{m,1-f_m}$ has dimension $0$, i.e. is constant and equal to $1$.
So for any $m\ne m'$, the characteristic polynomial of $f_{m'}$ with respect to $D_m$ is
	\[
    D_m(t-f_{m'}) = D_{m,f_m}(t)D_{m,1-f_m}(t-f_{m'}) = t^{d_m}.
    \]
Then $f_{m'} = f_{m'}^{d_m} \in \operatorname{CH}(D_m) \subseteq \ker(D_m)$, which is what we wanted to prove.
This further shows that for each $1\le m \le s$, 
the determinant $D_m$ is the composite of the surjections
	\[
    R \rightarrow \oplus_{l=1}^s f_l R f_l \rightarrow f_m R f_m
    \]
with the determinant $D_{f_m} \colon f_m R f_m \rightarrow A$.
Since any two choices of the data of idempotents are conjugate under $R^\times$,
each $D_m$ is uniquely determined by $D$.
\end{proof}

\section{Deformations}\label{sec_deformation_theory}

Galois deformation theory plays an essential role in this paper. The set of results we use is essentially identical to that of \cite{Tho16}, with some technical improvements. In this section we recall the notation used in \cite{Tho16}, without giving detailed definitions or proofs; we then proceed to prove the new results that we need. Some of the definitions recalled here were first given in \cite{Clo08} or \cite{Ger09}, but in order to avoid sending the reader to too many different places we restrict our citations to \cite{Tho16}.

We will use exactly the same set-up and notation for deformation theory as in \cite{Tho16}. We recall that this means that we fix at the outset the following objects:
\begin{itemize}
\item A CM number field $F$, with its totally real subfield $F^+$.
\item An odd prime $l$ such that each $l$-adic place of $F^+$ splits in $F$. We write $S_l$ for the set of $l$-adic places of $F^+$.
\item A finite set $S$ of finite places of $F^+$ which split in $F$. We assume that $S_l \subset S$ and write $F(S)$ for the maximal extension of $F$ which is unramified outside $S$ and set $G_{F, S} = \Gal(F(S) / F)$ and $G_{F^+, S} = \Gal(F(S) / F^+)$. We fix a choice of complex conjugation $c \in G_{F^+, S}$.
\item For each $v \in S$ we fix a choice of place $\wv$ of $F$ such that $\wv|_{F^+} = v$, and define $\widetilde{S} = \{ \wv \mid v \in S \}$.
\end{itemize}
We also fix the following data:
\begin{itemize}
\item A coefficient field $K \subset \overline{\bbQ}_l$ with ring of integers $\cO$, residue field $k$, and maximal ideal $\lambda \subset \cO$.
\item A continuous homomorphism $\chi : G_{F^+, S} \to \cO^\times$. We write $\overline{\chi} = \chi \text{ mod } \lambda$.
\item A continuous homomorphism $\overline{r} : G_{F^+, S} \to \cG_n(k)$ such that $\overline{r}^{-1}(\cG_n^\circ(k)) = G_{F, S}$. Here $\cG_n$ is the algebraic group over $\bbZ$ defined in \cite[\S 2.1]{Clo08}. We follow the convention that if $R : \Gamma \to \cG_n(A)$ is a homomorphism and $\Delta \subset \Gamma$ is a subgroup such that $R(\Delta) \subset \cG_n^0(A)$, then $R|_\Delta$ denotes the composite homomorphism
\[ \Delta \to \cG_n^0(A) = \GL_n(A) \times \GL_1(A) \to \GL_n(A). \]
Thus $\overline{r}|_{G_{F, S}}$ takes values in $\GL_n(k)$.
\end{itemize}
If $v \in S_l$, then we write $\Lambda_v = \cO \llbracket 
(I^\text{ab}_{F_\wv}(l))^n \rrbracket$, where $I^\text{ab}_{F_\wv}(l)$ denotes the inertia group in 
the maximal abelian pro-$l$ extension of $F_\wv$. 
We set $\Lambda = \widehat{\otimes}_v \Lambda_v$, the completed tensor product 
being over $\cO$. A \emph{global deformation problem}, as defined in \cite[\S 
3]{Tho16}, then consists of a tuple
\[ \cS = (  F / F^+, S, \widetilde{S},  \Lambda, \overline{r}, \chi, \{ \cD_v \}_{v \in S} ). \]
The extra data that we have not defined consists of the choice of a \emph{local deformation problem} $\cD_v$ for each $v \in S$. We will not need to define any new local deformation problems in this paper, but we recall that the following have been defined in \cite{Tho16}:
\begin{itemize}
\item ``Ordinary deformations'' give rise to a problem $\cD_v^\triangle$ for each $v \in S_l$ (\cite[\S 3.3.2]{Tho16});
\item ``Steinberg deformations'' give rise to a problem $\cD_v^\text{St}$ for 
each place $v \in S$ such that $q_v \equiv 1 \text{ mod }l$ and 
$\overline{r}|_{G_{F_\wv}}$ is trivial;
\item ``$\chi_v$-ramified deformations'' give rise to a problem 
$\cD_v^{\chi_v}$ for each place $v \in S$ such that $q_v \equiv 1 \text{ mod 
}l$ and $\overline{r}|_{G_{F_\wv}}$ is trivial, given the additional data of a 
tuple  $\chi_v = (\chi_{v, 1}, \dots, \chi_{v, n})$ of characters $\chi_{v, i} 
: k(v)^\times(l) \to k^\times$.
\item ``Unrestricted deformations'' give rise to a problem $\cD_v^\square$ for any $v \in S$. 
\end{itemize}
If $\cS$ is a global deformation problem, then we can define (as in \cite{Tho16}) a functor $\Def_\cS : \cC_\Lambda \to \text{Sets}$ of ``deformations of type $\cS$''. By definition, if $A \in \cC_\Lambda$, then $\Def_\cS(A)$ is the set of $\GL_n(A)$-conjugacy classes of homomorphisms $r : G_{F^+, S} \to \cG_n(A)$ lifting $\overline{r}$ such that $\nu \circ r = \chi$ and for each $v \in S$, $r|_{G_{F_\wv}} \in \cD_v(A)$. If $\overline{r}$ is \emph{Schur} (see \cite[Definition 3.2]{Tho16}), then the functor $\Def_\cS$ is represented by an object $R_\cS^\text{univ} \in \cC_\Lambda$.

\subsection{An erratum to \cite{Tho16}}\label{subsec_erratum}

We point out an error in \cite{Tho16}. We thank Lue Pan for bringing this to our attention. In \cite[Proposition 3.15]{Tho16}, it is asserted that the ring $R_v^1$ (representing the deformation problem $\cD_v^1$ for $v \in R$, defined under the assumptions $q_v \equiv 1 \text{ mod }l$ and $\overline{r}|_{G_{F_\wv}}$ trivial) has the property that $R_v^1 / (\lambda)$ is generically reduced. This is false, even in the case $n = 2$, as can be seen from the statement of \cite[Proposition 5.8]{Sho16} (and noting the identification $R_v^1 / (\lambda) = R_v^{\chi_v} / (\lambda)$). We offer the following corrected statement.
\begin{proposition}
	Let $\overline{R}_v^1$ denote the nilreduction of $R_v^1$. Then $\overline{R}_v^1 / (\lambda)$ is generically reduced.
\end{proposition}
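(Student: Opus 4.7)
The plan is to work component-by-component on $\Spec R_v^1$ and invoke the explicit description of the components due to Shotton in \cite{Sho16}. Let $\frq_1, \ldots, \frq_r$ denote the minimal primes of $R_v^1$ and set $A_i := R_v^1/\frq_i$, so that each $A_i$ is a complete Noetherian local domain. By definition $\Nil(R_v^1) = \bigcap_i \frq_i$, so the nilreduction $\overline{R}_v^1$ injects into $\prod_i A_i$ with the same total ring of fractions.

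The first reduction will be to show that it suffices to prove $A_i/(\lambda)$ is generically reduced for each $i$. If $\frp$ is a minimal prime of $\overline{R}_v^1/(\lambda)$, it is a height-$\le 1$ prime of $\overline{R}_v^1$ containing $\lambda$; the localization $(\overline{R}_v^1)_\frp$ is a reduced $1$-dimensional Noetherian local ring whose minimal primes correspond to those $\frq_i$ contained in $\frp$, and its mod-$\lambda$ quotient will be reduced provided each of the localizations $(A_i/(\lambda))_{\frp A_i}$ is reduced at the corresponding contracted prime. Equivalently, one needs each $A_i$ to be regular at every height-$1$ prime containing $\lambda$, i.e., the special fibre $A_i/(\lambda)$ to satisfy Serre's condition $R_0$.

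The last step is to verify this component-by-component using Shotton's explicit presentation of each $A_i$ as a quotient of a power series ring over $\cO$ by equations encoding the monodromy type (exploiting the identification $R_v^1/(\lambda) = R_v^{\chi_v}/(\lambda)$). The non-reducedness of $R_v^1/(\lambda)$ identified in \cite[Proposition~5.8]{Sho16} comes from how distinct components intersect in characteristic $l$ and is therefore captured by the nilradical $\Nil(R_v^1)$, so killing it before reducing modulo $\lambda$ should remove the failure. The main obstacle is the case-by-case Jacobian-criterion computation on the explicit defining equations of each component in Shotton's classification; this is routine but must be carried out uniformly to cover all monodromy types that appear, and one needs to be careful with the purely characteristic-$l$ components (where $\frq_i$ already contains $\lambda$), for which the assertion is automatic because $A_i$ is itself a domain.
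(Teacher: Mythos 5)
Your reduction to individual components has a genuine gap. Knowing that each $A_i/(\lambda)$ is generically reduced does \emph{not} imply that $\overline{R}_v^1/(\lambda)$ is: if a minimal prime $\frp$ of $\overline{R}_v^1/(\lambda)$ contains two distinct minimal primes $\frq_i, \frq_j$ of $\overline{R}_v^1$, the localization $(\overline{R}_v^1/(\lambda))_\frp$ can be non-reduced even though every $(A_i/(\lambda))_{\frp A_i}$ is a field. For instance $R = \cO\llbracket x,y\rrbracket/(x(x-\lambda))$ is reduced, each of its two components is isomorphic to $\cO\llbracket y\rrbracket$ (so has reduced special fibre), yet $R/(\lambda) = k\llbracket x,y\rrbracket/(x^2)$ is nowhere reduced. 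So your step ``its mod-$\lambda$ quotient will be reduced provided each $(A_i/(\lambda))_{\frp A_i}$ is reduced'' fails exactly when two components of $\Spec R_v^1$ share an irreducible component of the special fibre; for the same reason your heuristic that the non-reducedness observed by Shotton ``comes from how distinct components intersect in characteristic $l$ and is therefore captured by the nilradical'' is backwards --- intersections of genuinely reduced components along the special fibre would produce a failure that passing to $\overline{R}_v^1$ cannot repair, so this possibility must actually be ruled out. The missing input, which your proposal never establishes, is that each generic point of the special fibre lies on exactly one irreducible component. This is precisely what the paper uses: $R_v^1$ is the completed local ring at $(1_n,1_n)$ of the moduli scheme $\cM$ of pairs $(\Phi,\Sigma)$ with $\Phi\Sigma\Phi^{-1}=\Sigma^{q_v}$ and characteristic polynomial of $\Sigma$ equal to $(X-1)^n$, and \cite[Lemma 3.15]{Tho11} states that the special fibres $\cM_i \otimes_\cO k$ of the irreducible components $\cM_i$ are the pairwise distinct irreducible components of $\cM \otimes_\cO k$ and are generically reduced; hence each generic point $\overline{\eta}_i$ has a neighbourhood in $\cM$ meeting no other $\cM_j$, so $\cO_{\overline{\cM},\overline{\eta}_i} = \cO_{\cM_i,\overline{\eta}_i}$ and the claim follows, with \cite[Theorem 23.9]{Mat07} (plus excellence of $\cM$) transporting generic reducedness to the completion $R_v^1$.

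Separately, the verification step you propose is not available in the needed generality: the explicit component descriptions in \cite{Sho16} concern $n=2$ (the paper cites \cite[Proposition 5.8]{Sho16} only as the counterexample showing $R_v^1/(\lambda)$ itself fails to be generically reduced), whereas the proposition is needed for all $n$. No Jacobian computation is required on the paper's route: both the generic reducedness of each component's special fibre and the crucial distinctness of those special fibres are already contained in \cite[Lemma 3.15]{Tho11}. (Your parenthetical worry about purely characteristic-$l$ components also does not arise, since the same lemma shows every irreducible component of $\cM$ has non-empty generic fibre.)
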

\begin{proof}
	Let $\cM$ denote the scheme over $\cO$ of pairs of $n \times n$ matrices $(\Phi, \Sigma)$, where $\Phi$ is invertible, the characteristic polynomial of $\Sigma$ equals $(X - 1)^n$, and we have $\Phi \Sigma \Phi^{-1} = \Sigma^{q_v}$. Then $R_v^1$ can be identified with the completed local ring of $\cM$ at the point $(1_n, 1_n) \in \cM(k)$. By \cite[Theorem 23.9]{Mat07} (and since $\cM$ is excellent), it's enough to show that if $\overline{\cM}$ denotes the nilreduction of $\cM$, then $\overline{\cM} \otimes_\cO k$ is generically reduced.
	
	Let $\cM_1, \dots, \cM_r$ denote the irreducible components of $\cM$ with their reduced subscheme structure. According to \cite[Lemma 3.15]{Tho11}, each $\cM_i \otimes_\cO K$ is non-empty of dimension $n^2$, while the $\cM_i \otimes_\cO k$ are the pairwise distinct irreducible components of $\cM \otimes_\cO k$, and are all generically reduced. Let $\overline{\eta}_i$ denote the generic point of $\cM_i \otimes_\cO k$. Then $\overline{\eta}_i$ admits an open neighbourhood in $\cM$ not meeting any $\cM_j$ ($j \neq i$). Consequently we have an equality of local rings  $\cO_{\overline{\cM}, \overline{\eta}_i} = \cO_{\cM_i, \overline{\eta}_i}$, showing that $\cO_{\overline{\cM}, \overline{\eta}_i} / (\lambda)$ is reduced (in fact, a field). This shows that $\overline{\cM} \otimes_\cO k$ is generically reduced.
\end{proof}
	We now need to explain why this error does not affect the proofs of the two results in \cite{Tho16} which rely on the assertion that $R_v^1 / (\lambda)$ is generically reduced. The first of these is \cite[Proposition 3.17]{Tho16}, which states that the Steinberg deformation ring $R_v^{St}$ has the property that $R_v^{St} / (\lambda)$ is generically reduced. The proof of this result is still valid if one replaces $R_v^1$ there with $\overline{R}_v^1$. Indeed, we need only note that $R_v^{St}$ is $\cO$-flat (by definition) and reduced (since $R_v^{St}[1/l]$ is regular, by \cite[Lemma 3.3]{Tay08}). The map $R_v^1 \to R_v^{St}$ therefore factors through a surjection $\overline{R}_v^1 \to R_v^{St}$.
	
	The next result is \cite[Lemma 3.40, (2)]{Tho16}, which describes the irreducible components of the localization and completion of a ring $R^\infty$ at a prime ideal $P_\infty$. The ring $R^\infty$ has $R_v^1$ as a (completed) tensor factor, and the generic reducedness is used to justify an appeal to \cite[Proposition 1.6]{Tho16}. Since passing to nilreduction does not change the underlying topological space, one can argue instead with the quotient of $R^\infty$ where $R_v^1$ is replaced by $\overline{R}_v^1$. The statement of \cite[Lemma 3.40]{Tho16} is therefore still valid. 

\subsection{Pseudodeformations}\label{subsec_pseudodeformations}

In this section, we fix a global deformation problem
\[ \cS = (  F / F^+, S, \widetilde{S},  \Lambda, \overline{r}, \chi, \{ \cD_v \}_{v \in S} ) \]
such that $\overline{r}$ is Schur. We write $P_\cS \subset R_\cS^{\text{univ}}$ 
for the $\Lambda$-subalgebra topologically generated by the coefficients of 
characteristic polynomials of Frobenius elements $\Frob_w \in G_{F, S}$ ($w$ 
prime to $S$). The subring $P_\cS$ is studied in \cite[\S 3.4]{Tho16}, where it 
is shown using results of Chenevier that $P_\cS$ is a complete Noetherian local 
$\Lambda$-algebra and that the inclusion $P_\cS \subset R_{\cS}^\text{univ}$ is 
a finite ring map (see \cite[Proposition 3.29]{Tho16}).

In fact, more is true, as we now describe. Let $\overline{B} \in \GL_n(k)$ be 
the matrix defined by the formula $\overline{r}(c) = (\overline{B}, - 
\chi(c))\jmath \in \cG_n(k)$. Let $\overline{\rho} = \overline{r}|_{G_{F, S}}$, 
and suppose that there is a decomposition  $\overline{r}=\oplus_{i=1}^{\num} 
\overline{r}_i$ with $\overline{\rho}_i=\overline{r}_i|_{G_{F,S}}$ absolutely 
irreducible for each $i$. The representations $\overline{\rho}_i$ are pairwise 
non-isomorphic, because $\overline{r}$ is Schur (see \cite[Lemma~3.3]{Tho16}). 
We recall (\cite[Lemma~3.1]{Tho16}) that to give a lifting 
$r:G_{F^+,S}\rightarrow \cG_n(R)$ of $\overline{r}$ with $\nu\circ r = \chi$ is 
equivalent to giving the following data:
\begin{itemize}
\item A representation $\rho: G_{F,S}\rightarrow \GL_n(R)$ lifting $\overline{\rho} = \overline{r}|_{G_{F,S}}$. 
\item A matrix $B \in \GL_n(R)$ lifting $\overline{B}$ with $\prescript{t}{}{B} 
= 
-\chi(c)B$ and $\chi(\delta)B = \rho(\delta^c)B\prescript{t}{}{\rho}(\delta)$ 
for all $\delta \in G_{F,S}$.
\end{itemize}
The equivalence is given by letting $\rho = r|_{G_{F,S}}$ and $r(c) = 
(B,-\chi(c))\jmath$. Conjugating $r$ by $M \in \GL_n(R)$ takes $B$ to 
$MB\prescript{t}{}{M}$. Note that the matrix $B$ defines an isomorphism 
$\chi\otimes\rho^\vee \overset{\sim}{\rightarrow} \rho^c$. 

We embed the group $\mu_2^{\num}$ in $\GL_n(\cO)$ as block diagonal matrices, 
the $i^\text{th}$ block being of size $\dim_k \overline{\rho}_i$. We assume 
that the global deformation problem $\cS$ has the property that each local 
deformation problem $\cD_v \subset \cD_v^\square$ is invariant under 
conjugation by $\mu_2^{\num}$; this is the case for each of the local 
deformation problems recalled above. With this assumption, the group 
$\mu_2^{\num}$ acts on the ring $R_{\cS}^\text{univ}$ by conjugation of the 
universal deformation, and we have the following result.
\begin{proposition}\label{prop:pseudoinvariants}
\begin{enumerate} \item We have an equality $P_\cS = 
(R_\cS^\text{univ})^{\mu_2^{\num}}$.
\item Let $\frp \subset R_\cS^\text{univ}$ be a prime ideal, and let $\frq = 
\frp \cap P_\cS$. Let $E = \Frac R_\cS^\text{univ} / \frp$, and suppose that 
the associated representation $\rho_\frp = r_\frp|_{G_{F, S}}\otimes_A E : 
G_{F, S} \to \GL_n(E)$ is absolutely irreducible. Then $P_\cS \to 
R_\cS^\text{univ}$ is \'etale at $\frq$ and $\mu_2^{\num}$ acts transitively on 
the 
set of primes of $R_{\cS}^\text{univ}$ above $\frq$. 
\end{enumerate}
\end{proposition}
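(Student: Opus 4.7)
For part (1), the inclusion $P_\cS \subseteq (R_\cS^\text{univ})^{\mu_2^\num}$ is immediate, since $P_\cS$ is topologically generated by characteristic polynomial coefficients of Frobenius elements, which are conjugation-invariants. For the reverse inclusion, my plan is to apply Theorem~\ref{thm:detGMA} to the $P_\cS$-subalgebra $\Sigma \subseteq M_n(R_\cS^\text{univ})$ generated by the universal representation on $G_{F,S}$. Every element of $\Sigma$ has trace in $P_\cS$ by construction, so $\Sigma$ carries a $P_\cS$-valued Cayley--Hamilton determinant (matrix algebras are Cayley--Hamilton) with residual factorization $\prod_{i=1}^\num (\det \circ \overline{\rho}_i)$. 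Hence Theorem~\ref{thm:detGMA} puts a GMA structure on $\Sigma$, and by the uniqueness of the data of idempotents up to $\Sigma^\times$-conjugation we may take $e_1, \ldots, e_\num$ to be the standard block-diagonal idempotents in $M_n$. With this identification, the modules $\cA_{i,j}$ of Construction~\ref{GMAconstruction} become submodules of $R_\cS^\text{univ}$, and translating the matrix-multiplication rule into the commutative multiplication of $R_\cS^\text{univ}$ (via the identification of $\cA_{i,j}$ with a module of single-entry matrices in $\Sigma$) yields $\cA_{i,j} \cdot \cA_{j,k} \subseteq \cA_{i,k}$ with $\cA_{i,i} = P_\cS$.

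The $\mu_2^\num$-action on $R_\cS^\text{univ}$ then multiplies any element of $\cA_{i,j}$ by the character $\chi_{i,j}(\epsilon) = \epsilon_i \epsilon_j$. Since $R_\cS^\text{univ}$ is topologically generated over $P_\cS$ by matrix entries of the universal deformation, each lying in some $\cA_{i,j}$, and since $2^\num$ is invertible in $\cO$, averaging shows that every $\mu_2^\num$-invariant is a topological $P_\cS$-linear combination of monomials $\alpha_1 \cdots \alpha_N$ with $\alpha_s \in \cA_{i_s, j_s}$ whose total $\mu_2^\num$-weight is trivial; equivalently, the multigraph on $\{1, \ldots, \num\}$ with edge multiset $\{\{i_s, j_s\}\}$ has every vertex of even degree. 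By Euler's decomposition theorem such a multigraph splits into edge-disjoint cycles. Using commutativity of $R_\cS^\text{univ}$, I would reorder the monomial so that each cycle's factors appear consecutively in cyclic order; then iterated application of $\cA_{i,j} \cdot \cA_{j,k} \subseteq \cA_{i,k}$ places each cycle-product in some $\cA_{i,i} = P_\cS$. Hence the whole monomial lies in $P_\cS$, proving $(R_\cS^\text{univ})^{\mu_2^\num} \subseteq P_\cS$.

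For part (2), part (1) together with the finiteness of $R_\cS^\text{univ}$ over $P_\cS$ (cited in the introduction as \cite[Proposition~3.29]{Tho16}) realizes $\Spec R_\cS^\text{univ} \to \Spec P_\cS$ as a finite GIT quotient by $\mu_2^\num$. Transitivity of the group on primes above $\frq$ is then standard. For étaleness at $\frq$, invertibility of $|\mu_2^\num| = 2^\num$ in $\cO$ reduces the claim to showing that the inertia subgroup $I_\frp \subseteq \mu_2^\num$ of $\frp$ acts trivially on $R_\cS^\text{univ}$. If $\epsilon \in I_\frp$, then $\epsilon$ fixes $E = \Frac(R_\cS^\text{univ}/\frp)$ pointwise, and entry-wise application of $\epsilon$ then gives $\epsilon \rho_\frp \epsilon^{-1} = \rho_\frp$ in $\GL_n(E)$. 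Absolute irreducibility of $\rho_\frp$ forces $\epsilon$ to be a scalar in $\GL_n(E)$, so $\epsilon \in \{\pm I_n\}$; both elements act trivially on $R_\cS^\text{univ}$ since they are central in $\GL_n$.

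The main obstacle is the cycle-decomposition step in part (1). One must carefully check that the matrix multiplication within $\Sigma \subseteq M_n(R_\cS^\text{univ})$ really translates to the commutative-ring rule $\cA_{i,j} \cdot \cA_{j,k} \subseteq \cA_{i,k}$ inside $R_\cS^\text{univ}$, and that the $\cA_{i,j}$ together do topologically generate $R_\cS^\text{univ}$ as a $P_\cS$-algebra. Once these foundational points are in place, the combinatorial reorganization of an invariant monomial into cycle-products using commutativity of $R_\cS^\text{univ}$ and Euler's theorem runs smoothly.
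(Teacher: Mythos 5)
Your part (2) is essentially the paper's argument (transitivity from the invariant-ring formalism, and triviality of inertia at $\frp$ via absolute irreducibility of $\rho_\frp$ forcing the conjugating block-sign matrix to be scalar), modulo the small point that an element of inertia only gives $r_\frp$ conjugate to $g r_\frp g^{-1}$ by some $\gamma \in 1 + M_n(\ffrm)$, not literal equality; the conclusion survives since $g\gamma^{-1}$ scalar still forces $g$ scalar. The problem is in part (1), where your route (adapted GMA over $P_\cS$ plus an Euler cycle decomposition, with no Nakayama reduction) has a genuine gap: you have forgotten the complex-conjugation datum. $R_\cS^{\text{univ}}$ classifies $\cG_n$-valued deformations of $\overline{r}$ on $G_{F^+,S}$, so it is topologically generated over $\Lambda$ by the entries of a representative $r^{\text{univ}}$ on all of $G_{F^+,S}$ --- that is, by the entries of $\rho^{\text{univ}} = r^{\text{univ}}|_{G_{F,S}}$ \emph{together with} the entries of the matrix $B$ defined by $r^{\text{univ}}(c) = (B,-\chi(c))\jmath$. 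Your generation claim and the cycle argument only show that the $\mu_2^{\num}$-invariants of the subalgebra generated by entries of $\rho^{\text{univ}}$ lie in $P_\cS$. The diagonal blocks of $B$ are $\mu_2^{\num}$-invariant elements not of this form, and there is no GMA multiplication rule that places products involving $B$-entries into $P_\cS$, so your argument cannot reach them. This is exactly why the paper first reduces modulo $\ffrm_{P_\cS}$ by Nakayama and proves Lemma \ref{lem:blocklemma}: there the reducibility ideals vanish (Proposition \ref{prop:reducibleideal}), the diagonal blocks become literally $\overline{\rho}_i$, and Schur's lemma plus extraction of square roots (using $l$ odd) lets one conjugate so that $B = \overline{B}$, after which the entries of $\rho$ alone generate. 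None of this is available integrally over $R_\cS^{\text{univ}}$, so your direct approach does not go through as stated.

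A secondary issue, which you partly flag yourself: the uniqueness clause of Theorem \ref{thm:detGMA} gives conjugacy of GMA data by an element of $\Sigma^\times$, but the standard block idempotents of $M_n(R_\cS^{\text{univ}})$ need not lie in $\Sigma$, so that clause does not let you ``take $e_i$ standard,'' nor does it give $\cA_{i,i} = P_\cS$ as a literal subring of $R_\cS^{\text{univ}}$. What you need is the Bella\"iche--Chenevier theory of adapted representations (conjugating the embedding $\Sigma \hookrightarrow M_n(R_\cS^{\text{univ}})$ by an element of $1 + M_n(\ffrm)$ and using absolute irreducibility of the $\overline{\rho}_i$, as in the proof of Lemma \ref{lem:blocklemma}). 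That part is repairable with known results; the missing treatment of $B$ is the substantive obstruction, and repairing it would in effect push you back towards the paper's Nakayama-plus-Lemma \ref{lem:blocklemma} strategy.
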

We first establish a preliminary lemma, before proving the proposition.
\begin{lemma}\label{lem:blocklemma}
Let $R = R_{\cS}^\text{univ}/(\ffrm_{P_{\cS}})$, and let $r: 
G_{F^+,S}\rightarrow \cG_n(R)$ be a representative of the specialisation of the 
universal deformation. Then, after possibly conjugating by an element of 
$1+M_n(\ffrm_R)$, $r|_{G_{F,S}}$ has (block) diagonal entries given by 
$\overline{\rho}_1,\ldots,\overline{\rho}_{\num}$, and the matrix $B$ defined 
above is equal to $\overline{B}$. (Note we are not asserting that the 
off-diagonal blocks of $r|_{G_{F, S}}$ are zero.)
\end{lemma}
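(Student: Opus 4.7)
The plan is to apply Theorem \ref{thm:detGMA} and Proposition \ref{prop:reducibleideal} to the $R$-subalgebra $S = R[\rho(G_{F,S})] \subset M_n(R)$, where $\rho = r|_{G_{F,S}}$, equipped with the Cayley--Hamilton determinant $D : S \to R$ obtained by restricting the standard determinant on $M_n(R)$. The key preliminary observation is that $D$ is the base change along $k \hookrightarrow R$ of the residual determinant $\overline{D} : \overline{S} \to k$: indeed, $D$ factors through the universal pseudorepresentation $\cO[G_{F,S}] \to P_\cS$, and the composition $P_\cS \hookrightarrow R_\cS^{\mathrm{univ}} \twoheadrightarrow R$ factors through $P_\cS \twoheadrightarrow P_\cS/\ffrm_{P_\cS} = k \hookrightarrow R$. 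Consequently, over $R$, $D = \prod_{i=1}^{\num} \overline{D}_i$, with $\overline{D}_i = \det \overline{\rho}_i$ pairwise distinct absolutely irreducible.

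Theorem \ref{thm:detGMA} then produces data of idempotents $\{e_i, \psi_i\}$ in $S$ with $\psi_i : e_i S e_i \xrightarrow{\sim} M_{n_i}(R)$ lifting $\overline{\rho}_i$, while Proposition \ref{prop:reducibleideal} applied to the finest partition $\{\{1\},\ldots,\{\num\}\}$ forces vanishing of the reducibility ideals $\mathcal{A}_{i,j}\mathcal{A}_{j,i}$ for $i\neq j$, equivalently $(e_i S e_j)(e_j S e_i) = 0$. The cross terms in
\[ e_i \rho(gh) e_i = \sum_j (e_i \rho(g) e_j)(e_j \rho(h) e_i) \]
therefore vanish for $j \neq i$, so $\rho_i(g) := \psi_i(e_i \rho(g) e_i)$ is a genuine homomorphism $G_{F,S} \to \GL_{n_i}(R)$ lifting $\overline{\rho}_i$ whose trace is the $k$-valued $D_i$. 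The standard uniqueness lemma of Carayol--Nyssen for lifts of absolutely irreducible residual representations with prescribed trace then produces $V_i \in 1 + M_{n_i}(\ffrm_R)$ conjugating $\rho_i$ to $\overline{\rho}_i \otimes_k R$.

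To obtain a single conjugator in $1 + M_n(\ffrm_R)$, observe that the residual algebra $\overline{S}$ is isomorphic to $\prod_i M_{n_i}(k)$ by Artin--Wedderburn (since the $\overline{\rho}_i$ are pairwise distinct absolute irreducibles), so $\overline{e}_i$ necessarily coincides with the standard block projector $e_i^{\mathrm{std}}$ attached to the decomposition $\overline{r} = \bigoplus \overline{r}_i$. Henselian lifting of complete orthogonal idempotent systems in $M_n(R)$ yields $M_1 \in 1 + M_n(\ffrm_R)$ with $M_1 e_i M_1^{-1} = e_i^{\mathrm{std}}$. After replacing $\rho$ by this conjugate, Nakayama's lemma forces $e_i^{\mathrm{std}} S e_i^{\mathrm{std}} = M_{n_i}(R)$, so $\psi_i$ is an $R$-algebra automorphism of $M_{n_i}(R)$ lifting the identity and hence inner by Skolem--Noether: $\psi_i = \mathrm{Ad}(U_i)$ with $U_i \in 1 + M_{n_i}(\ffrm_R)$ (after scalar adjustment). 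Conjugating further by the block-diagonal matrix $\mathrm{diag}(V_1 U_1, \ldots, V_\num U_\num) \in 1 + M_n(\ffrm_R)$, which preserves each $e_i^{\mathrm{std}}$, makes the $(i,i)$ diagonal block of $\rho(g)$ exactly $\overline{\rho}_i(g)$.

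The statement $B = \overline{B}$ is then handled by a further conjugation in $1 + M_n(\ffrm_R)$ preserving the diagonal-block form of $\rho$. Since $\overline{B}$ is block diagonal (each $\overline{r}_i$ being conjugate self-dual) and since the cocycle identity $\chi(\delta) B = \rho(\delta^c) B\, {}^t\rho(\delta)$ rigidly constrains $B$ given $\rho$, the subgroup of $1 + M_n(\ffrm_R)$ preserving the diagonal blocks of $\rho$---which contains the block-scalar centralisers coming from Schur's lemma applied to each $\overline{\rho}_i$, together with certain off-diagonal corrections---can be used to first eliminate the off-diagonal blocks of $B$ and then normalise its diagonal blocks to equal $\overline{B}_{ii}$. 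I expect this final step of coordinating the corrections to $B$ with the (generally nonzero) off-diagonal blocks of $\rho$ to be the main obstacle; absolute irreducibility of each $\overline{\rho}_i$ and the resulting uniqueness up to scalar of the pairing $\overline{B}_{ii}$ supply the rigidity that forces the normalisation.
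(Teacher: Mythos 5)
Your treatment of the first assertion is essentially the paper's argument: you lift the idempotents of $\overline{S}\cong\prod_i M_{n_i}(k)$, use the fact that the determinant on $R=R^{\mathrm{univ}}_{\cS}/(\ffrm_{P_\cS})$ is residual to make the reducibility ideal of Proposition \ref{prop:reducibleideal} vanish (so $\cA_{i,j}\cA_{j,i}=0$ and each $x\mapsto e_i r(x)e_i$ is a homomorphism), and then normalise each diagonal block by a Carayol-type rigidity statement (the paper invokes the uniqueness in Proposition \ref{prop:reducibleideal} together with \cite[Theorem 2.22]{Che11} where you invoke Carayol--Nyssen; this difference is cosmetic). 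The gap is in the second assertion, $B=\overline{B}$, and you have flagged it yourself: your plan is to first normalise the diagonal blocks of $\rho$ and only afterwards repair $B$ by a further conjugation preserving that normal form. But the only elements of $1+M_n(\ffrm_R)$ guaranteed to preserve the diagonal blocks exactly are block scalars (Schur) and elements whose off-diagonal blocks lie in the ideals $\cA_{i,j}$, and nothing in your argument shows that the off-diagonal blocks of $B$ lie in (or can be pushed into) these ideals, so the ``elimination'' step is not justified; the coordination problem you call the main obstacle is exactly where the proof is missing.

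The paper avoids this by fixing $B$ \emph{before} normalising $\rho$: using the first case of the proof of \cite[Lemma~1.8.2]{BCAst} (which needs $2\in R^\times$), the lifted idempotents $e_i$ can be chosen to be fixed by the anti-involution $\star(M)=B\,\prescript{t}{}{M}\,B^{-1}$ coming from the conjugate self-duality. This $\star$-stability forces $B$ to be block diagonal from the outset; the subsequent normalisation of the diagonal blocks of $\rho$ is by a block-diagonal conjugation, which keeps $B$ block diagonal; and finally Schur's lemma gives $B_i=\beta_i\overline{B}_i$ with $\beta_i\in 1+\ffrm_R$, so conjugating by the block-scalar matrix with entries $\lambda_i$, $\lambda_i^2=\beta_i^{-1}$ (again using that $2$ is invertible, since conjugation by $M$ sends $B\mapsto MB\,\prescript{t}{}{M}$), achieves $B=\overline{B}$ without disturbing the diagonal blocks of $\rho$. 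Incorporating the choice of $\star$-stable idempotents is the missing idea you need to make your argument close.
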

\begin{proof}
We let $\overline{e}_1,\overline{e}_2,\ldots,\overline{e}_{\num} \in M_n(k)$ 
denote 
the standard idempotents decomposing $\overline{r}|_{G_{F,S}}$ into the block 
diagonal pieces $\overline{\rho}_1,\ldots,\overline{\rho}_{\num}$. We let $\cA 
\subset M_n(R)$ denote the image of $R[G_{F,S}]$ under $r$. The idempotents 
$\overline{e}_i$ lift to orthogonal idempotents $e_i$ in $\cA$ with $e_1 + 
\cdots + e_{\num} = 1$ and, after conjugating by an element of $1+M_n(\ffrm_R)$ 
we 
can assume that the $e_i$ are again the standard idempotents on $R^n$. 
Moreover, applying the first case of the proof of \cite[Lemma~1.8.2]{BCAst}, we 
can (and do) choose the $e_i$ so that they are fixed by the anti-involution 
$\star : \cA \to \cA$ given by the formula $M \mapsto B (\prescript{t}{}{M}) 
B^{-1}$. This implies that the matrix $B$ is block diagonal. We have $e_i\cA 
e_i = M_{n_i}(R)$ (see \cite[Lemma~1.4.3]{BCAst} and \cite[Theorem~2.22]{Che11}) 
and for each $i \ne j$ we have $e_i\cA e_j = M_{n_i,n_j}(\cA_{i,j})$, where 
$\cA_{i,j} \subset R$ is an ideal \cite[Proposition~1.3.8]{BCAst}. 

Since $\det \circ\, r = \det \circ\, \overline{r}$, 
Proposition \ref{prop:reducibleideal} shows 
that $\sum_{i \ne j}\cA_{i,j}\cA_{j,i} = 0$. This implies that for each $i$ the 
map \[R[G_{F,S}] \to M_{n_i}(R)\] given by \[ x \mapsto e_i r(x) e_i\] is an 
algebra homomorphism, and we get an $R$-valued lift of $\overline{\rho}_i$. By 
the uniqueness assertion in Proposition~\ref{prop:reducibleideal}~the determinant 
of this lift is equal to $\det \circ \overline{\rho}_i$. Since 
$\overline{\rho}_i$ is 
absolutely irreducible, it follows from \cite[Theorem~2.22]{Che11} that, after 
conjugating by a block diagonal matrix in $1+M_n(\ffrm_R)$, we can assume that 
the map \[ x \mapsto e_i r(x) e_i\] is induced by $\overline{\rho}_i$, which is 
the desired statement.

Finally, we consider the matrix $B$. We have already shown that $B$ is block 
diagonal. For $1\le i \le {\num}$ we denote the corresponding block of $B$ by 
$B_i$. It lifts a block $\overline{B}_i$ of $\overline{B}$. By Schur's lemma, 
we have $B_i = \beta_i\overline{B}_i$ for some scalars $\beta_i \in 1+\ffrm_R$. 
Since $2$ is invertible in $R$ we can find $\lambda_i \in 1+\ffrm_R$ with 
$\lambda_i^2 = \beta_i^{-1}$. Conjugating $r$ by the diagonal matrix with 
$\lambda_i$ in the $i$th block puts $r$ into the desired form.
\end{proof}
\begin{proof}[Proof of Proposition \ref{prop:pseudoinvariants}]
We begin by proving the first part. We again let $R = 
R_{\cS}^\text{univ}/(\ffrm_{P_{\cS}})$.  By Nakayama's lemma, it suffices to 
show that $R^{\mu_2^{\num}} = k$. Indeed, the natural map 
$(R_{\cS}^\text{univ})^{\mu_2^{\num}}/\ffrm_{P_{\cS}}(R_{\cS}^\text{univ})^{\mu_2^{\num}}
 \to R^{\mu_2^{\num}}$ 
is injective (i.e.~$(\ffrm_{P_{\cS}}R_{\cS}^\text{univ})^{\mu_2^{\num}} = 
 \ffrm_{P_{\cS}}(R_{\cS}^\text{univ})^{\mu_2^{\num}}$), since if $\sum_i m_i 
 x_i$ is ${\mu_2^{\num}}$-invariant, with $m_i \in \ffrm_{P_{\cS}}$ and $x_i 
 \in R_{\cS}^\text{univ}$, we have $\sum_i m_i x_i = \frac{1}{2^\num}\sum_i 
 m_i\sum_{\sigma \in \mu_2^\num} \sigma x_i$ which is an element of 
 $\ffrm_{P_{\cS}}(R_{\cS}^\text{univ})^{\mu_2^{\num}}$. Let $r : 
 G_{F^+, S} \to \cG_n(R)$ be a 
representative of the specialisation of the universal deformation satisfying 
the conclusion of Lemma \ref{lem:blocklemma}. Then $R$ is a finite $k$-algebra 
and is generated 
as a $k$-algebra by the matrix entries of $r$, hence the matrix entries of 
$\rho = r|_{G_{F, S}}$ (because $B = \overline{B}$). We recall the ideals 
$\cA_{i,j}\subset R$ appearing in the proof of Lemma \ref{lem:blocklemma}, 
which are generated by the block $(i,j)$ matrix entries of $\rho$. 
The conjugate self duality of $\rho$ is given by 
${}^t \rho(\delta) = \chi(\delta)\overline{B}^{-1} \rho((\delta^c)^{-1}) \overline{B}$, $\delta \in G_{F,S}$. 
Since $\overline{B}$ is block diagonal, we deduce that $\cA_{i,j}= \cA_{j,i}$. 
Since $\sum_{i \ne j}\cA_{i,j}\cA_{j,i} = 0$ we see that $\cA_{i,j}^2 = 
0$ for $i \ne j$. We deduce that $R$ is generated a $k$-module by $1 \in R$ 
	and products of 
the form \[a_{\cP} = \prod_{(i,j) \in \cP} a_{i,j}\] where $\emptyset \ne \cP 
\subset 
\{(i,j): 1\le i < 
j \le {\num}\} $ and $a_{i,j} \in \cA_{i,j}$ has action of $\mu_2^{\num}$ given 
by 
$((-1)^{\alpha_1},\ldots,(-1)^{\alpha_{\num}})a_{i,j} = 
(-1)^{\alpha_i+\alpha_j}a_{i,j}$. Suppose the action of $\mu_2^{\num}$ on 
$a_{\cP}$ 
is trivial. Then for each $1 \le i \le {\num}$, $i$ appears in an even number 
of 
elements of $\cP$. A product $a'_{j_1,j_2} = a_{1,j_1}a_{1,j_2}$ lies in 
$\cA_{j_1,j_2}$ and 
the action of $\mu_2^{\num}$ is given by 
$((-1)^{\alpha_1},\ldots,(-1)^{\alpha_{\num}})a'_{j_1,j_2} = 
(-1)^{\alpha_{j_1}+\alpha_{j_2}}a'_{j_1,j_2}$. Since $1$ appears in an 
even number of elements of $\cP$ we can `pair off' these elements and rewrite 
$a_\cP$ as a product \[a_{\cP'} 
= \prod_{(i,j) \in \cP'} a'_{i,j}\] where $\cP' \subset \{(i,j): 2\le i < 
	j \le {\num}\}$ and the action of $\mu_2^{\num}$ on $a'_{i,j}$ is given by 
	the same 
	formula as for $a_{i,j}$. 
Continuing in this manner, we deduce that $a_{\cP}$ is the product of an even 
number of elements of $\cA_{{\num}-1, {\num}}$, and thus equals $0$ since 
$\cA_{{\num}-1, {\num}}^2 = 0$.

The invariant subring $R^{\mu_2^{\num}}$ is equal to the $k$-submodule of $R$ 
generated by $\sum_{\sigma\in \mu_2^{\num}} \sigma x$ where $x$ runs over a set 
of 
$k$-module generators of $R$ (since $2$ is invertible in $k$). It follows from 
the above calculation that $R^{\mu_2^{\num}} = k$.

We now prove the second part. The diagonally embedded subgroup $\mu_2 \subseteq 
\mu_2^{\num}$ 
acts trivially on $R_{\cS}^\text{univ}$, 
so we have an induced action of $\mu_2^{\num}/\mu_2$. The first part together 
with \cite[\href{https://stacks.math.columbia.edu/tag/0BRI}{Tag 
0BRI}]{stacks-project} implies that $\mu_2^{\num} / \mu_2$ acts transitively on 
the set of primes of $R_{\cS}^\text{univ}$ above $\frq$. Let $R = 
R_{\cS}^\text{univ}/\frp$, and let $r_{\frp} \colon G_{F^+,S} \rightarrow 
\cG_n(R)$ be a representative of the specialisation of the universal 
deformation. By  
\cite[\href{http://stacks.math.columbia.edu/tag/0BST}{Tag~0BST}]{stacks-project},
 to finish the proof if will be enough to show that if $\sigma \in 
\mu_2^{\num}$, $\sigma(\frp) = \frp$, and $\sigma$ acts as the identity on $R$, 
then $\sigma \in \mu_2$.

If $\sigma \in \mu_2^{\num}$ corresponds to the block diagonal matrix $g\in 
\GL_n(\cO)$, then these conditions imply that $r_{\frp}$ and $g r_{\frp} 
g^{-1}$ are conjugate by an element $\gamma \in 1+M_n(\ffrm_R)$.
Since $r_{\frp}|_{G_{F,S}}\otimes E = \rho_{\frp}$ is absolutely irreducible, 
this implies that $g\gamma^{-1}$ is scalar, and so $g$ must also be scalar as 
$l>2$, hence $g \in \mu_2$. This completes the proof.
\end{proof}
For each partition $\{1, \dots, {\num}\} = \cP_1 \sqcup \cP_2$ with $\cP_1, 
\cP_2$ both non-empty, Proposition \ref{prop:reducibleideal} gives an ideal 
$I_{(\cP_1, \cP_2)} \subset P_\cS$ cutting out the locus where the determinant 
$\det r|_{G_{F, S}}$ is $(\cP_1, \cP_2)$-reducible. We write $I_\cS^{red} = 
\prod_{(\cP_1, \cP_2)} I_{(\cP_1, \cP_2)}$, an ideal of $P_\cS$.
\begin{lemma}\label{lem_reducibility_ideal}
	Let $\frp \subset R_\cS^{\text{univ}}$ be a prime ideal, and let $\frq = 
	\frp \cap P_{\cS}$. Let $A = R_{\cS}^{\text{univ}} / \frp$, 
	$E = \Frac A$. Then $\rho_\frp = r_\frp|_{G_{F, S}}\otimes_A E$ is 
	absolutely 
	irreducible if and 
	only if $I_\cS^{red} \not\subset \frq$.
\end{lemma}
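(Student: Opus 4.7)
The plan is to interpret both conditions through Proposition \ref{prop:reducibleideal} and reduce the lemma to a statement about factorizations of the determinant attached to $\rho_\frp$.

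Since $\frq$ is prime and $I_\cS^{red} = \prod_{(\cP_1, \cP_2)} I_{(\cP_1, \cP_2)}$ is a product of ideals, $I_\cS^{red} \subset \frq$ holds if and only if $I_{(\cP_1, \cP_2)} \subset \frq$ for some nontrivial partition $(\cP_1, \cP_2)$ of $\{1, \dots, \num\}$. Applying Proposition \ref{prop:reducibleideal} to the Henselian local ring $A = P_\cS$ with $J = \frq$, the latter is equivalent to a factorization $D^{\text{univ}} \otimes_{P_\cS} (P_\cS/\frq) = D_1 \cdot D_2$ with each $D_m \colon \Lambda[G_{F,S}] \to P_\cS/\frq$ a determinant of positive dimension whose residual is $\prod_{i \in \cP_m} \det \overline{\rho}_i$. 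One direction is then immediate: such a factorization, base-changed along $P_\cS/\frq \hookrightarrow E$, realizes $\det \rho_\frp$ as a nontrivial product of determinants; by the correspondence between determinants and semisimple representations over a field, this forces $\rho_\frp^{\text{ss}} \otimes_E \overline{E}$ to admit at least two Jordan--H\"older summands, so $\rho_\frp$ is absolutely reducible.

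For the converse, assume $\rho_\frp$ is absolutely reducible, and decompose $\rho_\frp^{\text{ss}} \otimes_E \overline{E} = \sigma_1 \oplus \cdots \oplus \sigma_s$ with $s \geq 2$. Taking invariant lattices over a DVR inside $\overline{E}$ and reducing modulo the uniformizer, $\bigoplus_j \overline{\sigma}_j^{\text{ss}} \cong \overline{\rho}^{\text{ss}} \otimes_k \overline{k}$; using the pairwise non-isomorphism of the $\overline{\rho}_i$, this yields a partition $\{\cP_1, \dots, \cP_s\}$ of $\{1, \dots, \num\}$ with $\overline{\sigma}_j^{\text{ss}} \cong \bigoplus_{i \in \cP_j} \overline{\rho}_i \otimes \overline{k}$. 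Coarsen to a nontrivial partition $(\cQ_1, \cQ_2)$ and set $D_m = \prod_{j : \cP_j \subset \cQ_m} \det \sigma_j$; since $\Gal(\overline{E}/E)$ permutes the $\sigma_j$'s only within each $\cP_j$ (the residual characters being defined over $k$), the $D_m$ are Galois-invariant and descend to determinants valued in $E$ satisfying $D_1 \cdot D_2 = \det \rho_\frp$.

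The main obstacle I expect is descending this factorization from $E$ down to $P_\cS/\frq$ so as to match the conclusion of Proposition \ref{prop:reducibleideal}. To handle this, I would use the explicit description $I_{(\cQ_1, \cQ_2)} = \sum_{i,j} \cA_{i,j} \cA_{j,i}$ (sum over pairs with $i, j$ in different parts of $(\cQ_1, \cQ_2)$): after choosing a data of idempotents on $R_\cS^{\text{univ}}/\cJ$ (for $\cJ$ with $\operatorname{CH}(D^{\text{univ}}) \subseteq \cJ \subseteq \ker D^{\text{univ}}$) as in Theorem \ref{thm:detGMA}, the $(i,j)$-block of $r^{\text{univ}}|_{G_{F,S}}$ has entries in the ideal $\cA_{i,j} \subset P_\cS$, and since $\frq$ is prime, $I_{(\cQ_1, \cQ_2)} \subset \frq$ is equivalent to $\cA_{i,j} \subset \frq$ or $\cA_{j,i} \subset \frq$ for each cross-partition pair. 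Taking an invariant $E$-subspace of $\rho_\frp$ whose residual is $\bigoplus_{i \in \cQ_1} \overline{\rho}_i \otimes \overline{k}$ and conjugating it, via a block-diagonal matrix in $1 + M_n(\ffrm_A)$, into the span of the basis vectors indexed by $\cQ_1$ (possible by Nakayama combined with the rigidity afforded by the $\overline{\rho}_i$ being pairwise distinct absolutely irreducible, as in the proof of Lemma \ref{lem:blocklemma}), the $(j,i)$-blocks of $r_\frp|_{G_{F,S}}$ vanish in $A$ for $i \in \cQ_1$, $j \in \cQ_2$, giving $\cA_{j,i} \subset \frq$ for all such pairs and hence $I_{(\cQ_1, \cQ_2)} \subset \frq$.
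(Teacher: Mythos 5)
Your forward direction is correct and is essentially the paper's argument: since $\frq$ is prime, $I_\cS^{red} \subset \frq$ gives $I_{(\cP_1,\cP_2)} \subset \frq$ for some partition, Proposition \ref{prop:reducibleideal} then factors the determinant over $P_\cS/\frq$, and base change to $E$ together with Chenevier's dictionary between determinants and semisimple representations over a field shows $\rho_\frp$ is not absolutely irreducible. The converse, however, has a genuine gap at its decisive step. You claim that a $G_{F,S}$-invariant $E$-subspace of $\rho_\frp$ whose residual constituents are the $\overline{\rho}_i$, $i \in \cQ_1$, can be moved by a conjugation in $1 + M_n(\ffrm_A)$ into the coordinate subspace indexed by $\cQ_1$, so that the cross blocks of $r_\frp|_{G_{F,S}}$ vanish \emph{in $A$}. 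But $A = R_\cS^{\text{univ}}/\frp$ is merely a complete Noetherian local domain, typically of dimension greater than one, and an invariant $E$-subspace need not be of the form $g^{-1}(E^{\cQ_1})$ for any $g \in \GL_n(A)$, let alone $g \in 1 + M_n(\ffrm_A)$: already for $A = k[[x,y]]$ the line $E\cdot(x,y) \subset E^2$ is not spanned by a unimodular vector of $A^2$. Nakayama plus rigidity of the $\overline{\rho}_i$ (as in Lemma \ref{lem:blocklemma}) only controls the situation modulo $\ffrm_{P_\cS}$; it does not produce an integral block-triangularization of $r_\frp$ from reducibility over the fraction field. There is also a type confusion in the preparatory step: the GMA coordinates $\cA_{i,j}$ attached to the determinant over $P_\cS$ are abstract $P_\cS$-modules (only the products $\cA_{i,j}\cA_{j,i}$ are canonically ideals of $P_\cS$), and they are not the sets of $(i,j)$-block entries of $r^{\text{univ}}|_{G_{F,S}}$, which lie in $R_\cS^{\text{univ}}$; so ``$\cA_{j,i} \subset \frq$'' does not literally make sense as written.

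The missing ingredient, which is what the paper uses, is the Bella\"iche--Chenevier structure theorem \cite[Theorem 1.4.4]{BCAst} applied to the image $\cA$ of $A[G_{F,S}]$ in $M_n(A)$: after conjugation one may assume $\cA$ is in adapted GMA form whose off-diagonal coordinates $\cA_{i,j}$ are \emph{fractional ideals of $E$} (not necessarily contained in $A$). If $\rho_\frp$ is not absolutely irreducible, then $\cA \otimes_A E \subsetneq M_n(E)$, so the tautological module $E^n$ has a proper invariant subspace which, after permuting the diagonal blocks, may be taken adapted to the block structure; this forces $\cA_{i,j} \otimes_A E = 0$ for $i > s$, $j \leq s$, hence $\cA_{i,j} = 0$ because $A$ is a domain, and therefore the image of $I_{(\cP_1,\cP_2)}$ in $A$ vanishes, i.e.\ $I_{(\cP_1,\cP_2)} \subset \frq$. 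No integral conjugation of the invariant subspace is needed, and the vanishing happens at the level of fractional ideals rather than of matrix entries congruent to zero mod $\frp$. Note also that your preliminary construction of a determinant factorization over $E$ (via lattices in a DVR inside $\overline{E}$ and Galois descent) is never used in your final argument, and it glosses over the fact that $A$ need not admit a canonical dominating DVR since it can have dimension greater than one; in the correct argument it is simply unnecessary.
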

\begin{proof}
	If $I_\cS^{red} \subset \frq$, then $I_{(\cP_1, \cP_2)} \subset \frq$ for 
	some proper partition $(\cP_1, \cP_2)$. Then Proposition 
	\ref{prop:reducibleideal} implies that $\det r_\frp$ admits a decomposition 
	$\det \circ r_\frp|_{G_{F,S}} = D_1 D_2$ for two determinants $D_i : 
	A[G_{F, S}] 
	\to 
	M_{n_i}(A)$. Then \cite[Corollary 2.13]{Che11} implies that $\rho_\frp$ is 
	not absolutely irreducible.
	
	Suppose conversely that $\rho_\frp$ is not absolutely irreducible. 
	Let $J_{(\cP_1, \cP_2)}$ denote the image of $I_{(\cP_1, \cP_2)}$ in $A$. 
	We must show that some $J_{(\cP_1, \cP_2)}$ is zero. Let $\cA$ denote 
	the image of $A[G_{F, S}]$ in $M_n(A)$ under $r_\frp|_{G_{F, S}}$. 
	According to \cite[Theorem 1.4.4]{BCAst}, we can assume that $\cA$ has the 
	form
		\begin{equation}\label{eqn_image_algebra}
	\cA = \begin{pmatrix}
	M_{n_1}(A) & M_{n_1,n_2}(\mathcal{A}_{1,2}) & \cdots & 
	M_{n_1,n_{\num}}(\mathcal{A}_{1,{\num}})\\
	M_{n_2,n_1}(\mathcal{A}_{2,1}) & M_{n_2}(A) & \cdots & 
	M_{n_2,n_{\num}}(\mathcal{A}_{2,{\num}})\\
	\vdots & \vdots & \ddots & \vdots \\
	M_{n_{\num},n_1}(\mathcal{A}_{{\num},1}) & 
	M_{n_{\num},n_2}(\mathcal{A}_{{\num},2}) & \cdots & M_{n_{\num}}(A)
	\end{pmatrix},
	\end{equation}
	where each $\mathcal{A}_{i, j}$ is a fractional ideal of $E$. Consequently $\cA \otimes_A E$ has the form
		\begin{equation}
	\cA \otimes_A E= \begin{pmatrix}
	M_{n_1}(E) & M_{n_1,n_2}(\mathcal{E}_{1,2}) & \cdots & 
	M_{n_1,n_{\num}}(\mathcal{E}_{1,{\num}})\\
	M_{n_2,n_1}(\mathcal{E}_{2,1}) & M_{n_2}(E) & \cdots & 
	M_{n_2,n_{\num}}(\mathcal{E}_{2,{\num}})\\
	\vdots & \vdots & \ddots & \vdots \\
	M_{n_{\num},n_1}(\mathcal{E}_{{\num},1}) & 
	M_{n_{\num},n_2}(\mathcal{E}_{{\num},2}) & \cdots & M_{n_{\num}}(E)
	\end{pmatrix},
	\end{equation}
	where each $\mathcal{E}_{i, j} = \mathcal{A}_{i, j} \otimes_A E$ equals 
	either $E$ or $0$. Let $f_i \in M_n(E)$ denote the matrix with 1 in the 
	$(i, i)$ entry and 0 everywhere else. If $\rho_\frp$ is not absolutely 
	irreducible 
	then $\cA \otimes_A E$ is a proper subspace of $M_n(E)$, so there exists 
	$i$ such that $(\cA \otimes_A E) f_i \subset M_n(E) f_i$ is a proper 
	subspace. Since $M_n(E) f_i$ is isomorphic as $M_n(E)$-module to the 
	tautological representation $E^n$, this implies that the $\cA \otimes_A 
	E$-module $E^n$ admits a proper invariant subspace. After permuting the 
	diagonal blocks, we can assume 
	that this subspace is $E^{n_1 + \dots + n_s}$ for some $s < {\num}$ 
	(included as 
	the subspace of $E^n$ where the last $n_{s+1} + \dots + n_{\num}$ entries 
	are 
	zero). Otherwise said, the spaces $\mathcal{E}_{i, j}$ for $i > s$, $j \leq 
	s$ are zero. If $\cP_1 = \{ 1, \dots, s \}$ and $\cP_2 = \{ s+1, \dots, 
	\num\}$ then this implies $\cJ_{(\cP_1, \cP_2)} \otimes_A E = 0$ and hence 
	(as 
	$A$ is a domain) $\cJ_{(\cP_1, \cP_2)} = 0$. This completes the proof. 
\end{proof}
\begin{lemma}\label{lem_reducible_implies_reducible}
	Let $\frp \subset R_\cS^{\text{univ}}$ be a prime ideal, $A = 
	R_\cS^{\text{univ}} / \frp$, $E = \Frac A$. Then $r_\frp\otimes_A E$ is 
	Schur and if $r_\frp|_{G_{F, S}} \otimes_A E$ is not absolutely 
	irreducible, then $r_\frp$ is equivalent (i.e.~conjugate by an element in 
	$1+ M_n(\ffrm_A)$) to a type-$\cS$ 
	lifting 
	of the form $r_\frp = r_1 \oplus r_2$, where $r_i : G_{F^+, S} \to 
	\cG_{m_i}(A)$ and $m_1 m_2 \neq 0$.
\end{lemma}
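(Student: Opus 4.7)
The plan is to handle Schur-ness of $r_\frp \otimes_A E$ first via standard lifting arguments, and then to produce the direct sum decomposition using the reducibility ideals from Section~\ref{sec_determinants} in combination with conjugate self-duality. For Schur-ness of the generic fibre: distinct absolutely irreducible constituents of its semisimplification come from disjoint subsets of the residual pieces $\{\overline{\rho}_i\}$ (which are pairwise non-isomorphic by the Schur hypothesis), hence remain non-isomorphic over $E$; the conjugate self-duality pairings required for Schur-ness are inherited from the residual picture.

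Assume now that $r_\frp|_{G_{F,S}} \otimes_A E$ is not absolutely irreducible. By Lemma~\ref{lem_reducibility_ideal}, $I_\cS^{red} \subset \frp \cap P_\cS$, so there is a proper partition $\{1, \ldots, \num\} = \cP_1 \sqcup \cP_2$ with $I_{(\cP_1, \cP_2)} \subset \frp$. After conjugating $r_\frp$ by an element of $1 + M_n(\ffrm_A)$, we may arrange via Theorem~\ref{thm:detGMA} that the image algebra $\cA \subset M_n(A)$ of $\rho := r_\frp|_{G_{F,S}}$ is a generalised matrix algebra refining $(\cP_1, \cP_2)$, and simultaneously that the matrix $B$ arising from $r_\frp(c) = (B, -\chi(c))\jmath$ is block diagonal lifting $\overline{B}$, as in the proof of Lemma~\ref{lem:blocklemma}. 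Proposition~\ref{prop:reducibleideal} then translates the assumption $I_{(\cP_1, \cP_2)} = 0$ in $A$ into the relations $\cA_{i,j}\cA_{j,i} = 0$ for $i \in \cP_1, j \in \cP_2$.

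The key step is to upgrade these cross-block product vanishings to individual vanishings $\cA_{i,j} = 0$, which I would do by passing to the generic fibre. The conjugate self-duality of $\rho \otimes_A E$ combined with the fact that each residual piece $\overline{\rho}_i$ is individually conjugate self-dual (so the induced involution on $\{1, \ldots, \num\}$ is trivial) forces $\rho \otimes_A E$ to split along the $(\cP_1, \cP_2)$-decomposition: a non-split filtration between conjugate self-dual pieces with disjoint residual content would, upon dualising, acquire a second sub-representation incompatible with the first. Hence $\cA \otimes_A E$ is literally block diagonal with respect to $(\cP_1, \cP_2)$, giving $\cA_{i,j} \otimes_A E = 0$ for $i \in \cP_1, j \in \cP_2$. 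Since $A = R_\cS^{\text{univ}}/\frp$ is a domain and the $\cA_{i,j}$ are $A$-submodules of $\cA \subset M_n(A)$, they are torsion-free, so $\cA_{i,j} = 0$ outright. This gives the block-diagonal form of $\rho$. The block diagonality of $B$ with respect to $(\cP_1, \cP_2)$ follows by applying the same reasoning to its off-diagonal blocks, which are $G_{F,S}$-intertwiners between conjugate-dually-twisted pieces with disjoint residual content, hence vanish over $E$ and over $A$ by torsion-freeness.

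The main obstacle is precisely this passage from cross-block product vanishing to individual cross-block vanishing; the resolution relies on establishing the splitting of $\rho \otimes_A E$ along the $(\cP_1, \cP_2)$-decomposition (a consequence of conjugate self-duality combined with each residual piece being individually self-dual) together with the domain structure of $A$.
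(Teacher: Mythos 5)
Your proposal reaches the correct conclusion with essentially the paper's toolkit (the generalized matrix algebra form of the image $\cA \subset M_n(A)$, residual multiplicity one, conjugate self-duality, and the fact that the $\cA_{i,j}$ are fractional ideals of $E$ over the domain $A$), but it routes the argument differently. The paper never passes through the reducibility ideal: from non-absolute-irreducibility it takes a proper invariant subspace of $E^n$, which after permuting blocks gives the one-sided vanishing $\cA_{i,j}=0$ for $i>m$, $j\le m$; conjugate self-duality is then invoked once, via \cite[Lemma~1.8.5]{BCAst}, to kill the opposite blocks $\cA_{j,i}$, and only at the very end is Schur-ness of $r_\frp\otimes_A E$ used to see that the pairing respects the decomposition. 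You instead extract a partition from $I_\cS^{red}\subset\frq$ via Lemma \ref{lem_reducibility_ideal}, so you only get the products $\cA_{i,j}\cA_{j,i}=0$, and you front-load the block-diagonality of $B$ by the idempotent argument of Lemma \ref{lem:blocklemma} (which does adapt from the Artinian setting there to the complete local domain $A$). Your treatment of Schur-ness, though terse, is the same in substance as the paper's: an isomorphism between a sub-constituent and the conjugate-dual twist of a quotient-constituent would force an equality of disjoint sets of residual constituents, which multiplicity one forbids.

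The soft spot is your ``key step''. The sentence ``a non-split filtration between conjugate self-dual pieces with disjoint residual content would, upon dualising, acquire a second sub-representation incompatible with the first'' is a complete argument only for a two-step filtration; with several constituents over $E$ one can imagine longer filtrations interleaving the $\cP_1$- and $\cP_2$-classes (say uniserial with layers of content in $\cP_1$, $\cP_2$, $\cP_1$), and dualising a single extension step says nothing about these. To complete your representation-theoretic route you need the global statement: every constituent of $\rho_\frp\otimes_A E$ is conjugate self-dual (its residual content is preserved by the twist and contents are pairwise disjoint), so conjugate self-duality gives $\mathrm{soc}\cong\mathrm{cosoc}$, multiplicity one then makes each socle constituent a direct summand, and induction shows $\rho_\frp\otimes_A E$ is semisimple; semisimplicity of $\cA\otimes_A E$ then kills the cross blocks over $E$, hence over $A$. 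Alternatively, and more cheaply given that you have already made $B$ block diagonal, observe that $x\mapsto B\,{}^t x\,B^{-1}$ is an involution of $\cA$ carrying the $(i,j)$ block onto the $(j,i)$ block, so $\cA_{i,j}\neq 0$ if and only if $\cA_{j,i}\neq 0$, and the relation $\cA_{i,j}\cA_{j,i}=0$ between fractional ideals of the field $E$ then forces both to vanish; this is precisely the role \cite[Lemma~1.8.5]{BCAst} plays in the paper's proof. With either repair your argument goes through.
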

\begin{proof}
	We argue, as in the proof of Lemma \ref{lem_reducibility_ideal}, using the 
	image $\cA \subset M_n(A)$ of $A[G_{F, S}]$, which is a generalized matrix 
	algebra. Suppose given $G_{F, S}$-invariant subspaces $E^n \supset W_1 
	\supset W_2$ such that $W_2$ and $E^n / W_1$ are irreducible. We can assume 
	that $\cA$ has the form (\ref{eqn_image_algebra}) and that this 
	decomposition is block upper triangular (perhaps with respect to a coarser 
	partition than $n = n_1 + \dots + n_{\num}$), and moreover than the first 
	block corresponds to $W_2$, while the last block corresponds to $E^n / 
	W_1$. In particular, $W_2$ and $E^n / W_1$ are even absolutely irreducible. 
	Note that there can be no isomorphism $W_2^{c \vee}(\nu \circ r_\frp) \cong 
	E^n / W_1$; if there was, then it would imply an identity of $A$-valued 
	determinants, which we could reduce modulo $\ffrm_A$ to obtain an identity 
	$\{ \rho_{i} \} = \{ \rho_j \}$ of sets of irreducible constituents of 
	$\overline{r}|_{G_{F, S}}$. Since these appear with multiplicity one, this 
	is impossible. This all shows that $r_\frp \otimes_A E$ is necessarily 
	Schur.
	
	Now suppose that $r_\frp|_{G_{F, S}}\otimes_A E$ is not absolutely 
	irreducible. After permuting the diagonal blocks of $\overline{r}$, we can 
	assume that there is some $1 \leq m \leq \num$ such that $\cA_{i, j} = 0$ 
	for 
	$i > m$, $j \leq m$. The existence of the conjugate self-duality of 
	$r_\frp$ implies (cf. \cite[Lemma 1.8.5]{BCAst}) that $\cA_{j, i} = 0$ in 
	the same range, giving a decomposition $r_\frp|_{G_{F, S}} = \rho_1 \oplus 
	\rho_2$ of representations over $A$. Since $r_\frp \otimes_A E$ is Schur 
	the conjugate self-duality of $r_\frp$ must make $\rho_1$ and $\rho_2$ 
	orthogonal, 
	showing that $r_\frp$ itself decomposes as $r_\frp = r_1 \oplus r_2$.
\end{proof}
\subsection{Dimension bounds}\label{subsec_dimension_bounds}

We now suppose that $S$ admits a decomposition $S = S_l \sqcup S(B) \sqcup R \sqcup S_a$, where:
\begin{itemize}
\item For each $v \in S(B) \cup R$, $q_v \equiv 1 \text{ mod }l$ and 
$\overline{r}|_{G_{F_\wv}}$ is trivial. 
\item For each $v \in S_a$, $q_v \not\equiv 1 \text{ mod }l$, $\overline{r}|_{G_{F_\wv}}$ is unramified, and $\overline{r}|_{G_{F_\wv}}$ is scalar. (Then any lifting of $\overline{r}|_{G_{F_\wv}}$ is unramified.)
\end{itemize}
We consider the global deformation problem
\[ \cS = (  F / F^+, S, \widetilde{S},  \Lambda, \overline{r}, \chi, \{ \cD_v^\triangle \}_{v \in S_l} \cup \{ \cD_v^\text{St} \}_{v \in S(B)} \cup \{ \cD_v^1 \}_{v \in R} \cup \{ \cD_v^\square \}_{v \in S_a} ), \]
where $\overline{r}$ is assumed to be Schur. We define quantities $d_{F, 0} = d_0$ and $d_{F, l} = d_l$ as follows. Let $\Delta$ denote the Galois group of the maximal abelian pro-$l$ extension of $F$ which is unramified outside $l$, and let $\Delta_0$ denote the Galois group of the maximal abelian pro-$l$ extension of $F$ which is unramified outside $l$ and in which each place of $S(B)$ splits completely. We set
\[ d_0 = \dim_{\bbQ_l} \ker( \Delta[1/l] \to \Delta_0[1/l] )^{c = -1} \]
and
\[ d_l = \inf_{v \in S_l} [F^+_v : \bbQ_l]. \]
\begin{lemma}\label{lem_reducible_dimension_bound}
Suppose that $d_l > n(n-1)/2 + 1$. Let $A \in \cC_\Lambda$ be a finite 
$\Lambda$-algebra and let $r : G_{F^+, S} \to \cG_n(A)$ be a lifting of 
$\overline{r}$ of type $\cS$. Then $\dim A / (I_\cS^{red}, \lambda) \leq n[F^+ 
: \bbQ] - d_0$.
\end{lemma}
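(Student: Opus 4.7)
The strategy is to pass to a minimal prime $\frp$ of $A$ above $(I_\cS^\text{red},\lambda)$, use Lemma \ref{lem_reducible_implies_reducible} to write the specialised lift in the block form $r_1\oplus r_2$, and then extract the deficit $d_0$ from a dimension count of the reducible-lifting moduli problem modulo $\lambda$, in which the Steinberg condition at the places of $S(B)$ plays the decisive role.

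Since $\dim A/(I_\cS^\text{red},\lambda) = \sup_\frp \dim A/(\frp,\lambda)$ as $\frp$ ranges over the minimal primes of $A$ containing $(I_\cS^\text{red},\lambda)$, it is enough to bound each such dimension. Fix such a $\frp$. Lemma \ref{lem_reducibility_ideal} implies that $r_\frp|_{G_{F,S}}\otimes_A \Frac(A/\frp)$ is reducible, and Lemma \ref{lem_reducible_implies_reducible} then gives, after conjugation, $r_\frp = r_1\oplus r_2$ with $r_i : G_{F^+,S}\to \cG_{m_i}(A/\frp)$ a type-$\cS$ lifting, corresponding to some nontrivial partition $\cP_1\sqcup\cP_2$ of $\{1,\ldots,\num\}$. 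Using the Schur/distinctness hypothesis on $\overline{r}$ (combined with distinct Hodge--Tate weights), the residual diagonal characters of $\overline{r}_j|_{G_{F_v}}$ for $j\in\cP_1$ at each $v\in S_l$ are disjoint from those for $j\in\cP_2$; hence the ordinary filtration on $(r_1\oplus r_2)|_{G_{F_v}}$ respects the block decomposition, and each $r_i|_{G_{F_v}}$ is itself ordinary with a specific subset of the diagonal characters $\psi_j : G_{F_v}\to(A/\frp)^\times$.

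I would then carry out the parameter count for $A/(\frp,\lambda)$. The tuple of characters $(\psi_j|_{I_{F_\wv}})$ defines a structure map from a quotient of $\Lambda/\lambda$, of Krull dimension $n[F^+:\bbQ]$, onto the image of $\Lambda$ in $A/\frp$; the conditions at $v\in R\cup S_a$ contribute no extra constraint modulo $\lambda$ once we are in characteristic $l$. The crucial constraint comes from $v\in S(B)$: since $q_v\equiv 1\pmod l$ forces $\epsilon\equiv 1\pmod\lambda$ on $G_{F_\wv}$, any Steinberg-type reducible lift modulo $\lambda$ satisfies strong relations among the diagonal characters when restricted to decomposition groups at $v$ (all such restrictions being forced to agree with a common unramified character). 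Combining these Steinberg relations with the conjugate self-duality identities $\psi_j\psi_{n+1-j}^c\equiv\overline{\chi}$, which project everything onto the $c=-1$ part, class field theory identifies the resulting constrained character deformation space with a quotient of $\Delta_0[1/l]^{c=-1}$ of $\Delta[1/l]^{c=-1}$, whose dimension deficit from the unconstrained version is exactly $d_0$.

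The hypothesis $d_l > n(n-1)/2 + 1$ enters to guarantee that the ordinary local deformation ring at each $v\in S_l$ does not contribute unexpected extra dimensions beyond those captured by the characters on inertia: its relative dimension over $\Lambda_v$ is bounded in terms of $n(n-1)/2\cdot[F^+_v:\bbQ_l]$, and this assumption ensures that the image of $\Lambda_v$ in $A/\frp$ dominates the local contribution. The main obstacle will be the class-field-theoretic bookkeeping in the last step: one must verify rigorously that the interaction of the conjugate self-duality constraint with the Steinberg relations at $S(B)$ cuts out \emph{exactly} the kernel of $\Delta[1/l]^{c=-1}\to\Delta_0[1/l]^{c=-1}$, rather than a proper sub-kernel or quotient thereof, and handle carefully the (finite) extra ambiguities introduced by working with the matrix $B$ and the various blockings of the decomposition $r_1\oplus r_2$.
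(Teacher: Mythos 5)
Your overall skeleton is the same as the paper's (decompose $r$ as $r_1\oplus r_2$ via Lemma \ref{lem_reducible_implies_reducible}, control everything through the inertial characters at $l$, and extract the deficit $d_0$ from the Steinberg condition at $S(B)$ together with class field theory), but two of the steps that would make this a proof are either justified incorrectly or left essentially open. First, your reason for each block $r_i|_{G_{F_\wv}}$ ($v\in S_l$) being ordinary --- that the residual diagonal characters of the two blocks are ``disjoint'' --- is not a hypothesis and is false in exactly the intended applications, where $\overline{r}|_{G_{F_\wv}}$ is \emph{trivial} for $v \mid l$. The correct mechanism is to invoke \cite[Corollary 3.12]{Tho16} (this is where the hypothesis $d_l > n(n-1)/2+1$ is actually used, not to bound ``extra dimensions'' of the local ordinary ring: the dimension bound comes solely from $A$ being a finite $\Lambda$-algebra), intersect the resulting filtration of $r|_{G_{F_\wv}}\otimes_A\overline{E}$ with each summand to get filtrations with graded pieces of dimension $\le 1$ whose inertial characters form a sub-tuple of the universal characters, and record the induced bijection $\sigma_v$, which produces an isomorphism $\Lambda_{v,1}\widehat{\otimes}\Lambda_{v,2}\cong\Lambda_v$ compatible with the given $\Lambda$-structure on $A$ and exhibits each $r_i|_{G_{F_\wv}}$ as a $\cD_v^\triangle$-point.

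Second, the class-field-theoretic step, which you acknowledge as the main obstacle, is misdirected as sketched: the characters $\psi_j$ you propose to restrict to decomposition groups at $v\in S(B)$ are \emph{local} characters at $l$-adic places, and $r_1,r_2$ need not decompose further, so there are no global ``diagonal characters'' to restrict. The only global characters available are the normalised determinants $\chi_i=\overline{\chi}_i^{-1}\det r_i|_{G_F}$, which are unramified outside $l$ (this uses the specific choices of local conditions at $R$, $S(B)$, $S_a$) and factor through $\Delta/(c+1)$ by conjugate self-duality. The Steinberg condition at $v\in S(B)$ yields exactly the relation $\chi_1(\Frob_\wv)^{n_2}=\chi_2(\Frob_\wv)^{n_1}$ in $A$ (using $q_v\equiv 1 \bmod l$ and $\lambda=0$ in $A$), so the constrained parameter object is the quotient $\cE$ of $\Delta/(c+1)\times\Delta/(c+1)$ by the $\bbZ_l$-span of the elements $(n_2\Frob_\wv,-n_1\Frob_\wv)$, whose deficit is exactly $d_0$; your identification with ``a quotient of $\Delta_0[1/l]^{c=-1}$'' is the wrong object (if both determinant characters were forced through $\Delta_0$ the deficit would be $2d_0$), and the exactness you worry about is a non-issue for this upper bound, since one only needs deficit $\ge d_0$, which is immediate because $n_1,n_2\neq 0$ makes $x\mapsto(n_2x,-n_1x)$ injective after inverting $l$. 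Finally, to convert this into the claimed bound you still need the finiteness of $\Lambda_0/(\lambda)\to k\llbracket\Delta/(c+1)\rrbracket$ (global characters unramified outside $l$ are controlled, up to finite ambiguity, by their restriction to inertia at $l$) together with the finiteness of $A$ over $\Lambda$, so that the structure map factors through a quotient of dimension $n[F^+:\bbQ]-d_0$; this bookkeeping is absent from your sketch.
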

\begin{proof}
We can assume without loss of generality that $A = A / (I_\cS^{red}, \lambda)$, and must show that $\dim A \leq [F^+ : \bbQ] - d_0$. Since $A$ is Noetherian and we are interested only in dimension, we can assume moreover that $A$ is integral. Let $E = \Frac(A)$. Then (Lemma \ref{lem_reducible_implies_reducible}) we can find a non-trivial partition $n = n_1 + n_2$ and homomorphisms $r_i : G_{F^+, S} \to \cG_{n_i}(A)$ ($i = 1, 2$) such that $r = r_1 \oplus r_2$. 

Let $\overline{E}$ be a choice of algebraic closure of $E$. Our condition on $d_l$ means that we can appeal to \cite[Corollary 3.12]{Tho16} (characterization of $A$-valued points of $\cD_v^\triangle$ for each $v \in S_l$). This result implies the existence for each $v \in S_l$ of an increasing filtration
\[ 0 \subset \Fil^1_v \subset \Fil^2_v \subset \dots \subset \Fil^n_v = \overline{E}^n \]
of $r|_{G_{F_\wv}} \otimes_A \overline{E}$ by $G_{F_\wv}$-invariant subspaces, such that each $\Fil^i_v / \Fil^{i-1}_v$ is 1-dimensional, and the character of $I^\text{ab}_{F_\wv}(l)$ acting on this space is given by composing the universal character $\psi_v^i : I^\text{ab}_{F_\wv}(l) \to \Lambda_v^\times$ with the homomorphism
\[ \Lambda_v \to \Lambda \to A \to \overline{E}. \]
The direct sum decomposition of $r$ leads to a decomposition $r|_{G_{F_\wv}} = r_1|_{G_{F_\wv}} \oplus r_2|_{G_{F_\wv}}$. Define $F_v^i = \Fil_v^i \cap r_1|_{G_{F_\wv}} \otimes_A \overline{E}$ and $G_v^i = \Fil_v^i \cap r_2|_{G_{F_\wv}} \otimes_A \overline{E}$. Then $F_v^\bullet$ and $G_v^\bullet$ are increasing filtrations of $\overline{E}^{n_1}$ and $\overline{E}^{n_2}$, respectively, with graded pieces of dimension at most 1. We write $\sigma_v$ for the bijection
\[ \sigma_v : \{ 1, \dots, n_1 \} \sqcup \{ 1, \dots, n_2 \} \to \{1, \dots, n \} \]
which is increasing on $\{ 1, \dots, n_1 \}$ and $\{ 1, \dots, n_2 \}$ and which has the property that $\sigma_v( \{ 1, \dots, n_1 \} )$ is the set of $i \in \{1, \dots, n\}$ such that the graded piece $F_v^i / F_v^{i-1}$ is non-trivial. 

Let $\Lambda_{v, 1}$, $\Lambda_{v, 2}$ denote the analogues of the algebra $\Lambda_v$ in dimensions $n_1$ and $n_2$, respectively. The bijection $\sigma_v$ determines in an obvious way an isomorphism $\Lambda_{v, 1} \widehat{\otimes} \Lambda_{v, 2} \cong \Lambda_v$. Applying again \cite[Corollary 3.12]{Tho16}, we see that with this structure on $A$ of $\Lambda_{v, i}$-algebra, each homomorphism $r_i|_{G_{F_\wv}} : G_{F_\wv} \to \GL_{n_i}(A)$ is of type $\cD_v^\triangle(A)$. Similarly if we define $\Lambda_i = \widehat{\otimes}_{v \in S_l} \Lambda_{v, i}$ then the collection of bijections $(\sigma_v)_{v \in S_l}$ determines an isomorphism $\Lambda_1 \widehat{\otimes}_\cO \Lambda_2 \cong \Lambda$. 

We also define $\Lambda_{v, 0} = \cO \llbracket I^\text{ab}_{F_\wv}(l) \rrbracket$ and $\Lambda_0 = \widehat{\otimes}_{v \in S_l} \Lambda_{v, 0}$.
Then there are natural maps $\Lambda_0 \to \Lambda_i$ classifying the characters $\prod_{j=1}^{n_i} \psi_v^{j} : I^\text{ab}_{F_\wv}(l) \to \Lambda_{v, i}^\times$. Let $\overline{\chi}_i = \det \overline{r}_i|_{G_{F, S}}$.  We get a commutative diagram
\[ \xymatrix{ \Lambda_{0} \widehat{\otimes}_\cO \Lambda_{0} \ar[r] \ar[d]  & k\llbracket \Delta  / (c+1) \rrbracket \widehat{\otimes}_k k\llbracket \Delta / (c+1) \rrbracket \ar[d] \\ \Lambda_{1} \widehat{\otimes}_\cO \Lambda_{2} \ar[r] & A}, \]
where the map $k\llbracket \Delta / (c+1) \rrbracket \widehat{\otimes}_k k\llbracket \Delta /  (c+1) \rrbracket \to A$ classifies the pair of $A$-valued characters $(\chi_1, \chi_2) = ( \overline{\chi}_1^{-1} \det r_1|_{G_F}, \overline{\chi}_2^{-1} \det r_2|_{G_F})$ of the group $\Delta / (c+1)$. The natural map  $\Lambda_{0}/(\lambda) \to k \llbracket  \Delta  / (c+1) \rrbracket$ is finite 
(and dominant). (Note that $\det r_1$ and $\det r_2$ are unramified at places of $S(B) \cup R \cup S_a$ because of our choices of deformation problem.)

We now use the existence of the places $S(B)$. For each place $v \in S(B)$, imposing the Steinberg condition on $r_1 \oplus r_2$ determines a relation $\chi_1(\Frob_\wv)^{n_2} = \chi_2(\Frob_\wv)^{n_1}$ in $A$. Let $\cE$ denote the quotient of the group $\Delta  / (c+1) \times \Delta / (c+1)$ by the $\bbZ_l$-submodule generated by the elements $(n_2 \Frob_\wv, -n_1 \Frob_\wv)$ ($v \in S(B)$). Then $\dim_{\bbQ_l} \cE[1/l] = \dim_{\bbQ_l} (\Delta / (c+1)\times \Delta / (c+1))[1/l] - d_0$, and we in fact have a commutative diagram
\[ \xymatrix{ \Lambda_{0} \widehat{\otimes}_\cO \Lambda_{0} \ar[r] \ar[d]  & k\llbracket \cE \rrbracket \ar[d] \\ \Lambda_{1} \widehat{\otimes}_\cO \Lambda_{2} \ar[r] & A}. \]
We deduce that the map $\Lambda \cong \Lambda_1 \widehat{\otimes}_\cO \Lambda_2 
\to A$ factors through the quotient $\Lambda_1 \widehat{\otimes}_\cO \Lambda_2 
\otimes_{\Lambda_{0} \widehat{\otimes}_\cO \Lambda_{0}} k\llbracket \cE 
\rrbracket$ of dimension $n[F^+ : \bbQ] - d_0$. Using finally that $A$ is a 
finite $\Lambda$-algebra, we see that $\dim A$ must satisfy the same estimate. 
This concludes the proof.
\end{proof}

\begin{definition}\label{def_generic_homomorphism} Let $A \in \cC_\Lambda$, and let $r : G_{F^+, S} \to \cG_n(A)$ be a homomorphism of type $\cS$. We say that $r$ is generic at $l$ if it satisfies the following two conditions:
\begin{itemize}
\item For each $v \in S_l$, the universal characters $\psi^v_{ 1}, \dots, \psi^v_{n}  : I^\text{ab}_{F_\wv}(l) \to A^\times$ are distinct.
\item There exists $v \in S_l$ and $\sigma \in I^\text{ab}_{F_\wv}(l)$ such 
that the elements $\psi^v_1(\sigma),\dots, \psi^v_n(\sigma) \in A^\times$ 
satisfy no non-trivial $\bbZ$-linear relation. 
\end{itemize}
We say that $r$ is generic if it is generic at $l$, $A$ is a domain, and $r|_{G_F} \otimes_A \Frac(A)$ is absolutely irreducible.
\end{definition}
\begin{lemma}\label{lem_generic_dimension_bound}
There exists a countable collection of ideals $I_i \subset \Lambda/(\lambda)$ 
($i = 1, 2, \dots$) with the following properties:
\begin{enumerate}
\item For each $i = 1, 2, \dots$, we have $\dim \Lambda / I_i \leq n[F^+ : 
\bbQ] - d_l$.
\item Suppose that $A \in \cC_\Lambda$ and $r : G_{F^+} \to \cG_n(A)$ is a lifting of type $\cS$ which is not generic at $l$. Then there exists $i \geq 1$ such that $I_i A = 0$.
\end{enumerate}
\end{lemma}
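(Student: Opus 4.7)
The plan is to exhibit a countable family of ideals of $\Lambda/(\lambda)$, one for each way the two conditions defining genericity at $l$ can fail. For $v \in S_l$, set $G_v = I^{\text{ab}}_{F_\wv}(l)$, a finitely generated pro-$l$ group of $\bbZ_l$-rank $d_v := [F^+_v : \bbQ_l]$; recall $\Lambda_v = \cO \llbracket G_v^n \rrbracket$ and that $\psi^v_k : G_v \to \Lambda_v^\times$ is the $k$-th universal character. I define the family to consist of:
\begin{itemize}
\item for each $v \in S_l$ and pair $1 \le i < j \le n$, the ideal $I^{(v,i,j)} \subset \Lambda/(\lambda)$ cutting out the closed locus on which $\psi^v_i (\psi^v_j)^{-1}$ is the trivial character;
\item for each $v \in S_l$ and $(a_1,\dots,a_n) \in \bbZ^n \setminus \{0\}$, the ideal $I^{(v,(a_k))} \subset \Lambda/(\lambda)$ cutting out the closed locus on which $\chi_{(a_k)} := \prod_k (\psi^v_k)^{a_k}$ is the trivial character.
\end{itemize}
Since $S_l$ is finite, this family is countable.

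To verify the dimension bound in (1), I would fix a $\bbZ_l$-basis $\sigma_1, \dots, \sigma_{d_v}$ of $G_v^{\text{tf}}$ and use the presentation $\Lambda_v/(\lambda) \cong k[T_v^n] \llbracket X^v_{k,j} \rrbracket$ for $1 \le k \le n$, $1 \le j \le d_v$, with $T_v \subset G_v$ the torsion subgroup and $X^v_{k,j} = \psi^v_k(\sigma_j) - 1$. Modulo $\lambda$, the ideal $I^{(v,i,j)}$ is generated by $X^v_{i,j'} - X^v_{j,j'}$ for $j'=1,\dots,d_v$, and $I^{(v,(a_k))}$ by $\prod_k (1+X^v_{k,j'})^{a_k} - 1$ for $j'=1,\dots,d_v$. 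In either case these are $d_v$ non-zero power series lying in pairwise disjoint sets of variables indexed by $j'$ (for $I^{(v,(a_k))}$, non-vanishing is checked after a Frobenius twist when $l$ divides every $a_k$), so they form a regular sequence. Hence $\Lambda/(I, \lambda)$ has Krull dimension exactly $n[F^+:\bbQ] - d_v \le n[F^+:\bbQ] - d_l$.

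For property (2), suppose $r$ is of type $\cS$ but not generic at $l$. If condition (1) of Definition~\ref{def_generic_homomorphism} fails, then $\psi^v_i = \psi^v_j$ in $A^\times$ for some $v$ and $i \neq j$, so $I^{(v,i,j)} A = 0$. Otherwise condition (2) fails, meaning that for every $v \in S_l$ and every $\sigma \in G_v$ there exists $(a_k) \in \bbZ^n \setminus \{0\}$ (a priori depending on $\sigma$) with $\chi_{(a_k)}(\sigma) = 1$; equivalently, $G_v = \bigcup_{(a_k) \neq 0} \ker \chi_{(a_k)}$. The main obstacle is precisely to promote this pointwise failure of multiplicative independence to a uniform relation. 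This is resolved by the Baire category theorem applied to the compact Hausdorff space $G_v$: some $\ker \chi_{(a_k)}$ has non-empty interior and is therefore an open subgroup of finite index, and since $G_v$ is pro-$l$ this index is a power $l^e$ of $l$. Then $\chi_{(a_k)}^{l^e} = \chi_{(l^e a_k)}$ is globally trivial on $G_v$ and $(l^e a_k) \neq 0$, so $I^{(v, (l^e a_k))} A = 0$, completing the proof.
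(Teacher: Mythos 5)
Your proposal is correct, but it takes a genuinely different route from the paper for the multiplicative-independence half. The paper needs no uniformity statement at all: its second family of ideals is indexed by tuples of integer matrices $(A_v)_{v \in S_l}$ with non-zero columns, i.e.\ by a choice of one relation vector for each place $v$ \emph{and} each chosen topological generator $\sigma_{v,j}$ of $I^{\text{ab}}_{F_{\wv}}(l)$, the generators being $\prod_i \psi_v^i(\sigma_{v,j})^{A_{v,i,j}} - 1$; since the negation of genericity supplies a relation at every $\sigma$, in particular at the finitely many $\sigma_{v,j}$, property (2) is immediate bookkeeping, and imposing one relation per generator at every place gives the (stronger, unneeded) bound $(n-1)[F^+:\bbQ]$. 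You instead take, for each $v$, the triviality locus of a single character $\prod_k (\psi^v_k)^{a_k}$, which forces you to upgrade the pointwise relations (a priori depending on $\sigma$) to one relation valid on all of $G_v$; your Baire category step does this correctly: the $\ker \chi_{(a_k)}$ are closed subgroups of the profinite group $G_v$, a countable cover forces one to be open of $l$-power index $l^e$, and then $\chi_{(l^e a_k)}$ is identically trivial with $(l^e a_k) \neq 0$. Your dimension count $n[F^+:\bbQ]-d_v$ (via generators in disjoint variable sets, with the Frobenius twist handling the case $l \mid a_k$ for all $k$) also suffices for the required bound $\leq n[F^+:\bbQ]-d_l$, and your first family (distinctness of $\psi^v_i, \psi^v_j$) coincides with the paper's ideals $I(i,j,v)$. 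In short, your indexing buys a cleaner family --- each ideal is the vanishing locus of one character --- at the price of a topological input (Baire on $G_v$) that the paper's purely combinatorial indexing avoids; both satisfy (1) and (2), and the downstream application in Lemma \ref{lem_combined_dimension_bound} only uses the statement of the lemma, so either family works there.
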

\begin{proof}
For each $v \in S_l$, let $d_v = [ F^+_v : \bbQ_l]$ and let $\sigma_{v, 1}, \dots, \sigma_{v, d_v} \in I^\text{ab}_{F_{\wv}}(l)$ be elements which project to a $\bbZ_l$-basis of the $l$-torsion-free quotient of this finitely generated $\bbZ_l$-module.

For each $1 \leq i < j \leq n$ and $v \in S_l$, we define an ideal $I(i, j, v) = (\lambda, \psi_v^i(\sigma_{v, k}) - \psi_v^j(\sigma_{v, k}))_{k = 1, \dots, d_v}$. Then $\dim \Lambda / I(i, j, v) = n [F^+ : \bbQ] - d_v$ and if $\frp \subset \Lambda$ is a prime of characteristic $l$ which does not contain $I(i, j, v)$, 
then the characters $\psi_v^i \text{ mod } \frp$ and $\psi_v^j \text{ mod } \frp$ are distinct. 

Suppose given for each $v \in S_l$ an $n \times d_v$ matrix $A_v$ with integer entries, and with at least one non-zero entry in each column. Then we define an ideal $J((A_v)_{v \in S_l}) \subset \Lambda$ as the ideal generated by $\lambda$ and all the elements
\[ \prod_{i=1}^n \psi_v^i(\sigma_{v, j})^{A_{v, i, j}} - 1 \text{ }(v \in S_l, j = 1, \dots, d_v). \]
Then $\dim \Lambda / J((A_v)_{v \in S_l}) = (n-1)[F^+ : \bbQ]$ and if $\frp \subset \Lambda$ is a prime of characteristic $l$ not containing any of the ideals $J((A_v)_{v \in S_l})$, then there exists $v \in S_l$ and $1 \leq j \leq d_v$ such that the elements $\psi_v^1(\sigma_{v, j}), \dots, \psi_v^n(\sigma_{v, j}) \in (\Lambda / \frp)^\times$ satisfy no non-trivial $\bbZ$-linear relation. 

The lemma is completed on taking our countable collection of ideals $I_i$ to consist of all the ideals $I(i, j, v)$ and $J((A_v)_{v \in S_l})$ defined above. 
\end{proof}
Combining the previous lemmas, we obtain the following result.
\begin{lemma}\label{lem_combined_dimension_bound}
Suppose that $d_l > n(n-1)/2 + 1$ and that $A \in \cC_\Lambda$ is a finite 
$\Lambda$-algebra such that $\dim A / (\lambda) > \sup (n[F^+ : \bbQ] - d_0, 
n[F^+ : \bbQ] - d_l)$. Let $r : G_{F^+, S} \to \cG_n(A)$ be a homomorphism of 
type $\cS$. Then we can find a prime ideal $\frp \subset A$ of dimension 1 and 
characteristic $l$ such that, writing $r_\frp = r \text{ mod } \frp: G_{F^+} 
\to \cG_n(A / \frp)$, $r_\frp$ is generic. 
\end{lemma}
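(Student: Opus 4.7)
The plan is to combine Lemmas~\ref{lem_reducible_dimension_bound} and~\ref{lem_generic_dimension_bound} to obtain a countable collection of proper closed subsets of $\Spec A/(\lambda)$, all of dimension at most $D := \sup(n[F^+:\bbQ] - d_0,\, n[F^+:\bbQ] - d_l)$, and then find a dimension-$1$ prime $\frp$ of $A$ containing $\lambda$ that avoids all of them. Concretely, the ``bad'' ideals are $I_\cS^{\mathrm{red}} A$ and the $I_i A$ for $i = 1,2,\ldots$: Lemma~\ref{lem_reducible_dimension_bound} gives $\dim A/(I_\cS^{\mathrm{red}}, \lambda) \leq n[F^+:\bbQ] - d_0$, and finiteness of $A$ over $\Lambda$ together with Lemma~\ref{lem_generic_dimension_bound} gives $\dim A/(I_i A, \lambda) \leq \dim \Lambda/I_i \leq n[F^+:\bbQ] - d_l$. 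Any prime $\frp$ as above will yield, via Lemma~\ref{lem_reducibility_ideal}, absolute irreducibility of $r_\frp|_{G_F}\otimes\Frac(A/\frp)$, and via the contrapositive of Lemma~\ref{lem_generic_dimension_bound}, genericity at $l$ of $r_\frp$; so $r_\frp$ is generic as required.

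Set $B = A/(\lambda)$ and pick a minimal prime $\frq_0$ of $B$ with $\dim B/\frq_0 = \dim B > D$, which exists by hypothesis; this $\frq_0$ automatically lies in none of the bad closed subsets (else $V(\frq_0) \subseteq V(J)$ would force $\dim B/\frq_0 \leq D$). Passing to $\bar B := B/\frq_0$, a complete Noetherian local domain of dimension $> D \geq 0$, every bad ideal has nonzero image in $\bar B$. The problem reduces to the following purely ring-theoretic statement: for any complete Noetherian local domain $R$ of dimension $d \geq 1$ and any countable family $(\bar J_\alpha)$ of nonzero ideals of $R$, there exists a prime $\frp \subset R$ with $\dim R/\frp = 1$ and $\bar J_\alpha \not\subseteq \frp$ for all $\alpha$.

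I would prove this by induction on $d$. The base case $d = 1$ is trivial (take $\frp = 0$). For $d \geq 2$, pick $0 \neq x_\alpha \in \bar J_\alpha$; by Krull's Hauptidealsatz each $x_\alpha$ is contained in only finitely many height-$1$ primes, so only countably many height-$1$ primes of $R$ are ``bad''. The key auxiliary fact is that a complete Noetherian local domain of dimension $\geq 2$ admits \emph{uncountably many} height-$1$ primes: by Cohen's structure theorem $R$ is finite over a power series ring $k[[T_1, \ldots, T_d]]$ (or $W(k)[[T_1, \ldots, T_{d-1}]]$ in the mixed characteristic case), in which one finds an uncountable family of non-associate irreducibles of Weierstrass type (e.g.\ $T_1 + u T_2$ for $u$ ranging over the uncountable set $1 + T_2 k[[T_2]]$), and each such prime has only finitely many lifts to $R$. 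Choosing a height-$1$ prime $\frp_1$ of $R$ avoiding all the bad ones, the quotient $R/\frp_1$ is again a complete Noetherian local domain, of dimension $d-1$ by catenarity, in which the images of the $\bar J_\alpha$ remain nonzero; induction concludes. Pulling back through $A \twoheadrightarrow B \twoheadrightarrow \bar B$ yields the desired prime of $A$. The main obstacle is precisely the uncountability step: the standard prime-avoidance argument fails over the finite residue field $k$, but completeness forces $R$ itself to be uncountable and Cohen structure supplies enough concrete irreducibles to outnumber the countable bad set.
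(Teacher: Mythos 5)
Your proposal is correct, and its overall architecture coincides with the paper's: reduce modulo $\lambda$, assemble the countable family of ``bad'' ideals $I_\cS^{\mathrm{red}}A$ and $I_iA$ whose quotients have dimension at most $\sup(n[F^+:\bbQ]-d_0,\,n[F^+:\bbQ]-d_l)$ by Lemmas \ref{lem_reducible_dimension_bound} and \ref{lem_generic_dimension_bound}, and then produce a dimension-$1$, characteristic-$l$ prime avoiding all of them, whence genericity follows exactly as you say (Lemma \ref{lem_reducibility_ideal} for absolute irreducibility, the contrapositive of Lemma \ref{lem_generic_dimension_bound}(2) for genericity at $l$). The only point of divergence is the avoidance step: the paper simply cites \cite[Lemma 1.9]{Tho16} for the existence of a dimension-$1$ prime not containing any member of a countable family of ideals of strictly smaller-dimensional support, whereas you re-prove such a statement from scratch (pass to a minimal prime of maximal dimension, then induct on dimension using Krull's Hauptidealsatz, Cohen's structure theorem and the uncountability of the set of height-$1$ primes of a complete local domain of dimension $\geq 2$, together with the dimension formula for complete local domains). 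Your argument for this commutative-algebra ingredient is sound; the payoff is self-containedness at the cost of length. One small remark: your explicit family $T_1+uT_2$ does not make sense in the mixed-characteristic case when the power series base has a single variable (i.e.\ $d=2$), but this case never occurs here since $\lambda$ has already been killed, so every ring in the induction has characteristic $l$; alternatively one can use $T_1 - pa$, $a \in W(k)$, there.
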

\begin{proof}
Replacing $A$ by $A / (\lambda)$, we can assume that $A$ is a finite $\Lambda / (\lambda)$-algebra. Let $I_i$ ($i = 1, 2, \dots)$ be the countable collection of ideals of $\Lambda / (\lambda)$ defined in Lemma \ref{lem_generic_dimension_bound}. Then $\dim A / I_i A \leq n [ F^+ : \bbQ] - d_l$ for each $i = 1, 2, \dots$. Let $I_0 = I_\cS^{red} A$; then Lemma \ref{lem_reducible_dimension_bound} shows that $\dim A / I_0 \leq n [F^+ : \bbQ] - d_0$.

Applying \cite[Lemma 1.9]{Tho16}, we can find a prime ideal $\frp \subset A$ of dimension 1 (necessarily of characteristic $l$) such that $\frp$ does not contain any of the ideals $I_0$, $I_1 A$, $I_2 A, \dots$. By construction, the homomorphism $r_\frp$ is then generic. 
\end{proof}

\section{Automorphic forms and Hecke algebras on unitary groups}\label{sec_automorphic_forms}

\subsection{Hecke algebras}

We introduce automorphic forms on unitary groups and related Hecke algebras, using exactly the same notation as in \cite[\S 4]{Tho16}. This means we start with the following data:
\begin{itemize}
\item An integer $n \geq 1$, an odd prime $l$, and a totally imaginary CM 
number field $L$ with totally real subfield $L^+$. We write $S_l$ for the set 
of $l$-adic places of $L^+$. We assume that $L / L^+$ is everywhere unramified. 
(We note that this implies that $[L^+ : \bbQ]$ is even. Indeed, the 
quadratic character of 
$(L^+)^\times\backslash\bbA_{L^+}^\times/\widehat{\cO}_{L^+}^\times$ cutting 
out $L$ has non-trivial restriction to $(L^+_v)^\times$ for each $v|\infty$ but 
is 
trivial 
on $(-1)_{v|\infty} \in (L^+_\infty)^\times$.)
\item A coefficient field $K \subset \overline{\bbQ}_l$ which contains the image of all embeddings $L \hookrightarrow \overline{\bbQ}_l$. 
\item A finite set $S(B)$ of finite, prime-to-$l$ places of $L^+$ which all 
split in $L$. If $n$ is even then we assume that $n [ L^+ : \bbQ] / 2 + | S(B) 
|$ is also even. We allow the possibility that $S(B)$ may be empty. (Since 
$[L^+:\bbQ]$ is even, we 
are really just asking that if $n$ is even, then $|S(B)|$ is even.)
\end{itemize}
We can then find a central simple algebra $B$ over $L$ equipped with an involution $\dagger$, such that $\dim_L B = n^2$, $B^\text{op} \cong B \otimes_{L, c} L$, $B$ is split outside $S(B)$, $B$ is a division algebra locally at places $w | S(B)$ of $L$, and $\dagger|_L = c$. We can moreover assume that the unitary group $G$ over $L^+$ defined by the formula ($R$ an $L^+$-algebra)
\[ G(R) = \{ g \in (B \otimes_{L^+} R)^\times \mid g g^{\dagger \otimes 1} = 1 \} \]
has the property that $G(L^+ \otimes_\bbQ \bbR)$ is compact and for each finite place $v\not\in S(B)$ of $L^+$, $G_{L^+}$ is quasi-split (hence unramified).

We consider automorphic forms on the group $G$. To define our Hecke algebras, we need to fix the following additional choices:
\begin{itemize}
\item A finite set $R$ of finite places of $L^+$, disjoint from $S_l \cup S(B)$ and split in $L$, and such that for each $v \in R$, $q_v \equiv 1 \text{ mod }l$.
\item For each $v \in R$, we fix a choice of $n$-tuple of characters $\chi_{v, 1}, \dots, \chi_{v, n} : k(v)^\times \to \cO^\times$ which are trivial mod $\varpi$.
\item A finite set $T$ of finite places of $L^+$ split in $L$, containing $S_l \cup S(B) \cup R$. For each $v \in T$ we fix a choice of place $\wv$ of $L$ lying above $v$, and set $\widetilde{T} = \{ \wv \mid v \in T \}$.
\end{itemize}
It is also convenient to fix a choice of order $\cO_B \subset B$ such that $\cO_B^\dagger = \cO_B$ and for each place $v$ of $L^+$ which splits $v = w w^c$ in $L$, $\cO_{B_w}$ is a maximal order in $B_w$. We use this maximal order $\cO_B$ to extend $G$ to a group scheme over $\cO_{L^+}$. If $v \in T$, then a choice of isomorphism $\cO_{B_\wv} \cong M_n(\cO_{F_\wv})$ determines an isomorphism $\iota_\wv : G_{\cO_{F^+_v}} \to \Res_{\cO_{F_\wv} / \cO_{F_v^+}} \GL_n$, and we fix such a choice.  

Now suppose given an open compact subgroup $U = \prod_v U_v \subset G(\bbA_{L^+}^\infty)$ which satisfies the following conditions:
\begin{itemize}
\item There exists a finite place $v \not\in S_l$ of $L^+$ such that $U_v$ contains no non-trivial elements of finite order. (In other words, $U$ is sufficiently small in the terminology of \cite{Tho16}).
\item If $v \not\in T$ is a finite place of $L^+$ split in $L$, then $U_v = G(\cO_{L^+_v})$.
\item If $v \in R$, then $U_v = \iota_\wv^{-1} \Iw(\wv)$, where $\Iw(\wv) \subset \GL_n(\cO_{F_\wv})$ denotes the standard Iwahori subgroup.
\item If $v$ is a finite place of $L^+$ inert in $L$, then $U_v$ is a hyperspecial maximal compact subgroup of $G({L_v^+})$.
\end{itemize}
In this case we have defined in \cite[Definition 4.2]{Tho16} a Hecke algebra $\bbT^T_\chi(U(\frl^\infty), \cO)$. It is a finite faithful $\Lambda$-algebra, defined as an inverse limit of Hecke algebras which act on spaces of ordinary automorphic forms on $G$ with coefficients in $\cO$. According to \cite[Proposition 4.7]{Tho16}, to any maximal ideal $\ffrm \subset \bbT^T_\chi(U(\frl^\infty), \cO)$ of residue field $k$, one can associate a continuous semi-simple representation $\overline{\rho}_\ffrm : G_{F, T} \to \GL_n(k)$, uniquely characterized up to $\GL_n(k)$ conjugacy by a formula for the characteristic polynomials of Frobenius elements in terms of Hecke operators. 

\subsection{Deformation rings}\label{subsec_set-up_for_R_equals_T}
We connect the Hecke algebras defined in the previous section to deformation rings only under the following assumptions (1) -- (3):
\begin{enumerate}
\item $T$ has the form $T = S_l \sqcup S(B) \sqcup R \sqcup S_a$, where $S_a$ 
is a non-empty set of places of odd residue characteristic which are absolutely 
unramified and not split in $L(\zeta_l)$. For every $v \in S_l$, 
$[F_v^+:\bbQ_l] 
> n(n-1)/2 + 1$.
\item $U = \prod U_v \subset G(\bbA_{L^+}^\infty)$ is an open compact subgroup such that if $v \in S_a$, then $U_v = \iota_\wv^{-1}( \ker( \GL_n(\cO_{L_\wv}) \to \GL_n(k(\wv)) ) )$, and if $v \in S(B)$, then $U_v$ is the unique maximal compact subgroup of $G(F_v^+)$. Since $S_a$ is non-empty, this forces $U$ to be sufficiently small. 
\item $\ffrm \subset \bbT^T_\chi(U(\frl^\infty), \cO)$ is a maximal ideal of residue field $k$ such that $\overline{\rho}_\ffrm : G_{F, T} \to \GL_n(k)$ satisfies the following conditions:
\begin{itemize}
\item If $v \in S_a$, then $\overline{\rho}_\ffrm|_{G_{L_\wv}}$ is unramified and $\overline{\rho}_\ffrm(\Frob_\wv)$ is scalar.
\item If $v \in S_l \cup R \cup S(B)$, then $\overline{\rho}_\ffrm|_{G_{L_\wv}}$ is the trivial representation.
\item For each $v \in S(B) \cup R$, $q_v \equiv 1 \text{ mod }l$.
\item Let $\overline{\rho}_\ffrm = \oplus_{i=1}^\num \overline{\rho}_i$ denote 
the decomposition into simple constituents. Then each $\overline{\rho}_i$ is 
absolutely irreducible, satisfies $\overline{\rho}^c_i \cong 
\overline{\rho}_i^\vee \otimes \epsilon^{1-n}$, and if $i \neq j$ then 
$\overline{\rho}_i \not\cong \overline{\rho}_j$. (Equivalently, the maximal 
ideal $\ffrm$ is residually Schur in the sense of \cite[Definition 4.8]{Tho16}.)
\end{itemize}

\end{enumerate}
Then (\cite[Proposition 4.9]{Tho16}) $\overline{\rho}_\ffrm$ extends to a  homomorphism $\overline{r}_\ffrm : G_{L^+, T} \to \cG_n(k)$ such that $\nu \circ \overline{r}_\ffrm = \epsilon^{1-n} \delta_{L / L^+}^{n}$, and which is Schur. We can consider the global deformation problem
\[ \cS_\chi = (  L / L^+, T, \widetilde{T}, \Lambda, \overline{r}_\ffrm, \epsilon^{1-n} \delta_{L / L^+}^{n}, \{ R_v^\triangle \}_{v \in S_l} \cup \{ R_v^{\chi_v} \}_{v \in R} \cup \{ R_v^\text{St} \}_{v \in S(B) } \cup \{ R_v^\square \}_{v \in S_a} ). \]
Then there are $\Lambda$-algebra homomorphisms $P_{\cS_\chi} \to R_{\cS_\chi}$ and $P_{\cS_\chi} \to \bbT^T_\chi(U(\frl^\infty), \cO)_\ffrm$ (see \cite[Proposition 4.13]{Tho16}). We define $J_{\cS_\chi} = \ker( P_{\cS_\chi} \to \bbT^T_\chi(U(\frl^\infty), \cO)_\ffrm )$. 
\begin{theorem}\label{thm_application_of_Taylor-Wiles}
Let $\frp \subset R_{\cS_1}^\text{univ}$ be a prime ideal of dimension 1 and 
characteristic $l$. Let $A = R_{\cS_1}^\text{univ} / \frp$, and suppose that 
the following conditions are satisfied:
\begin{enumerate}
	\item $J_{\cS_1}R_{\cS_1}^\text{univ} \subset \frp$.
\item The representation $r_\frp$ is generic, in the sense of Definition \ref{def_generic_homomorphism}.
\item For each $v \in R$, $r_\frp|_{G_{L_\wv}}$ is the trivial representation and if $l^N || q_v - 1$ then $l^N > n$. For each $v  \in S(B)$, $r_\frp|_{G_{L_\wv}}$ is unramified and $r_\frp(\Frob_\wv)$ is a scalar matrix. 
\item $\overline{r}_\ffrm|_{G_{L, S}}$ is primitive. 
\item $\zeta_l \not\in L$, $\overline{r}_\ffrm|_{G_{L^+(\zeta_l)}}$ is Schur, 
and $\overline{r}_\ffrm(G_{L, S})$ does not have a quotient of order $l$.
\item $l > 3$ and $l \nmid n$.
\end{enumerate}
Let $Q \subset R_{\cS_1}^\text{univ}$ be a prime such that $Q \subset \frp$. 
Then $J_{\cS_1} R_{\cS_1}^\text{univ} \subset Q$. 
\end{theorem}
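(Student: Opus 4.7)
The plan is to invoke the $R = \bbT$ theorem of \cite{Tho16}, recalled as the main automorphic input in this section, at the generic prime $\frp$, and then to propagate from $\frp$ to any specialization $Q \subset \frp$ by a short commutative algebra argument. Conditions (3)--(6) of the theorem are precisely the adequacy/big-image hypotheses plus the local conditions needed to run the Taylor--Wiles--Kisin patching argument of \cite{Tho16}: primitivity and the Schur condition at $\zeta_l$ give enough Galois cohomological room for the construction of Taylor--Wiles primes, the conditions $l > 3$, $l \nmid n$, and ``no quotient of order $l$'' rule out the exceptional phenomena, and the local hypotheses at $R$ and $S(B)$ ensure the local deformation rings are well-behaved (in particular matching the types imposed in $\cS_1$). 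Given genericity of $r_\frp$ in the sense of Definition \ref{def_generic_homomorphism}, that argument yields an identification $(R_{\cS_1}^\text{univ})_\frp \cong \bbT^T_1(U(\frl^\infty),\cO)_{\ffrm,\frq}$ as $P_{\cS_1,\frq}$-algebras; in particular, each element of $J_{\cS_1}$ maps to zero in the completion $(R_{\cS_1}^\text{univ})_\frp$, so the extended ideal $J_{\cS_1} R_{\cS_1}^\text{univ} \cdot (R_{\cS_1}^\text{univ})_\frp$ is zero.

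The propagation is formal. Set $I = J_{\cS_1} R_{\cS_1}^\text{univ}$. The natural map $(R_{\cS_1}^\text{univ})_{(\frp)} \to (R_{\cS_1}^\text{univ})_\frp$ from the localization to its $\frp$-adic completion is faithfully flat, so the vanishing of $I$ in the completion forces $I$ to vanish already in the localization $(R_{\cS_1}^\text{univ})_{(\frp)}$. Since $R_{\cS_1}^\text{univ}$ is Noetherian, $I$ is finitely generated, and so there exists a single element $s \in R_{\cS_1}^\text{univ} \setminus \frp$ with $sI = 0$ in $R_{\cS_1}^\text{univ}$. For any prime $Q \subset \frp$ we have $s \notin Q$, and the image of $I$ in the domain $R_{\cS_1}^\text{univ}/Q$ is annihilated by $\overline{s} \neq 0$, hence is zero. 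This gives $I \subset Q$, as required.

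The hard part is the $R = \bbT$ input from \cite{Tho16}, which is a substantial Taylor--Wiles--Kisin patching argument; genericity of $r_\frp$ is what supplies the Cohen--Macaulayness of the patched module needed to promote the surjection $R \twoheadrightarrow \bbT$ to an isomorphism on completed local rings at $\frp$ and $\frq$. Granting that input, the residual content of the theorem is the purely commutative-algebraic propagation above. I expect that in \S\ref{sec_mainargument} this theorem will be combined with the dimension bounds of \S\ref{subsec_dimension_bounds} --- particularly Lemma \ref{lem_combined_dimension_bound}, which produces generic dimension-$1$ primes through any large enough quotient --- to spread automorphy from a single Hecke-algebra-captured prime $\frp$ to an entire irreducible component of $\Spec R_{\cS_1}^\text{univ}$ containing it, implementing the ``connectedness dimension'' philosophy advertised in the introduction.
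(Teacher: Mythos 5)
Your closing localization argument is fine as commutative algebra, but the proposal has a genuine gap: the input you grant yourself is precisely what this theorem has to establish, and it is not available off the shelf from \cite{Tho16}. The results there playing the role of your ``$R=\bbT$ at $\frp$'' --- Corollary 4.20, fed by the Taylor--Wiles primes of Corollary 5.7 --- are proved only when $\overline{r}_\ffrm|_{G_{L,S}}$ has \emph{two} irreducible constituents, whereas the point of the present theorem is to allow an arbitrary number $\num$. The paper's proof accordingly does three things you do not: (i) it first reduces, after twisting by a character (using \cite[Lemma 3.38]{Tho16} and \cite[Corollary 4.14]{Tho16}), to the situation $\Frac R_{\cS_1}^\text{univ}/\frp = \Frac P_{\cS_1}/\frq$; (ii) it invokes \cite[Corollary 5.7]{Tho16} together with \cite[Corollary 4.20]{Tho16} verbatim only in the two-constituent case; and (iii) for general $\num$ it reruns the proof of \cite[Corollary 4.20]{Tho16} with the $\mu_2^2$-action replaced by the $\mu_2^{\num}$-action of \S\ref{subsec_pseudodeformations} and the appeal to \cite[Proposition 3.29]{Tho16} replaced by Proposition \ref{prop:pseudoinvariants}, which identifies $P_{\cS_1}=(R_{\cS_1}^\text{univ})^{\mu_2^{\num}}$, shows $P_{\cS_1}\to R_{\cS_1}^\text{univ}$ is \'etale at $\frq$ when $\rho_\frp$ is absolutely irreducible, and shows $\mu_2^{\num}$ acts transitively on the primes above $\frq$. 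This $P_{\cS_1}$-versus-$R_{\cS_1}^\text{univ}$ bookkeeping is unavoidable: the Hecke algebra is only a $P_{\cS_1}$-algebra (the representation valued in $\bbT$ is a priori only a pseudorepresentation), so there is no map $R_{\cS_1}^\text{univ}\to\bbT$ whose completion at $\frp$ you could compare; the comparison at $\frp$ and $\frq$ is exactly what Proposition \ref{prop:pseudoinvariants} and the twisting step supply, and it is the new content of this paper.

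Two smaller points. The form of input you posit --- an isomorphism $(R_{\cS_1}^\text{univ})_\frp \cong \bbT^T_1(U(\frl^\infty),\cO)_{\ffrm,\frq}$ --- is stronger than what the patching argument delivers and stronger than needed: what one obtains is that $J_{\cS_1}$ has image in every minimal prime of the relevant completed local ring (its nilradical), not that it vanishes; and genericity of $r_\frp$ is used to control the ordinary local deformation rings at $l$ and the structure of the patched ring at the prime, not to supply Cohen--Macaulayness of the patched module. Your faithful-flatness propagation from the completion to arbitrary $Q\subset\frp$ is correct (and adapts to the weaker ``nilpotent image'' input), but in the paper it is not needed, since the conclusion $J_{\cS_1}R_{\cS_1}^\text{univ}\subset Q$ is exactly the form in which the (suitably modified) \cite[Corollary 4.20]{Tho16} is proved.
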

\begin{proof}
Let $\frq = \frp \cap P_{\cS_1}$. We can assume, after twisting by a character, 
that $\Frac R_{\cS_1}^\text{univ} / \frp = \Frac P_{\cS_1} / \frq$. (Apply 
\cite[Lemma 3.38]{Tho16} and \cite[Corollary 4.14]{Tho16}.)

In the case that $\overline{r}_\ffrm|_{G_{L, S}}$ has two irreducible 
constituents, the theorem follows on combining \cite[Corollary 5.7]{Tho16} 
(existence of Taylor--Wiles primes under a subset of the hypotheses listed 
here) and \cite[Corollary 4.20]{Tho16} (the assertion $J_{\cS_1} 
R_{\cS_1}^\text{univ} \subset Q$ assuming existence of Taylor--Wiles primes 
and, in addition, that $\overline{r}_\ffrm|_{G_{L, S}}$ has two irreducible 
constituents).

The proof of \cite[Corollary 4.20]{Tho16} can easily be modified to allow 
$\overline{r}_\ffrm|_{G_{L, S}}$ to have an arbitrary number $\num$ of 
irreducible constituents: one just need replace the $\mu^2_2$ action there by 
the $\mu_2^{\num}$ action on $R_{\cS_1}^\text{univ}$ described in \S 
\ref{subsec_pseudodeformations}, and replace the appeal to \cite[Proposition 
3.29]{Tho16} with an appeal to Proposition \ref{prop:pseudoinvariants} of this 
paper. We omit the details. 
\end{proof}
\section{Propogation of potential pro-automorphy}\label{sec_mainargument}

In this section we use Theorem \ref{thm_application_of_Taylor-Wiles} (informally, $R = \bbT$ locally at generic primes) to prove our first automorphy lifting theorem for $l$-adic Galois representations. The argument follows similar lines to \cite[\S 6]{Tho16}. The main difference is that by making use of Lemma \ref{lem_combined_dimension_bound} we can make do under weaker assumptions. Especially, we do not need an a priori bound on the dimension of the locus of reducible deformations inside $R_{\cS_1}^\text{univ}$. 

Let us take up assumptions (1)--(3) of \S \ref{subsec_set-up_for_R_equals_T}. Thus we have a CM field $L$, a unitary group $G$, and a set $S = T = S_l \cup R \cup S(B) \cup S_a$ of finite places of $L^+$ split in $L$. We have an open compact subgroup $U \subset G(\bbA_{L^+}^\infty)$, a maximal ideal $\ffrm$ of the Hecke algebra $\bbT^S(U(\frl^\infty), \cO)$ of residue field $k$, a deformation problem
\[ \cS_1 = (  L / L^+, T, \widetilde{T}, \Lambda, \overline{r}_\ffrm, \epsilon^{1-n} \delta_{L / L^+}^{n}, \{ R_v^\triangle \}_{v \in S_l} \cup \{ R_v^{1} \}_{v \in R} \cup \{ R_v^\text{St} \}_{v \in S(B) } \cup \{ R_v^\square \}_{v \in S_a} ), \]
and a diagram of $\Lambda$-algebras
\[ \xymatrix@1{R^\text{univ}_{\cS_1}  & P_{\cS_1} \ar[l]\ar[r] &\bbT^{T}_{1}(U(\frl^\infty), \cO)_{\ffrm}. } \]
We can now state the main theorem of this section. 
\begin{theorem}\label{thm_main_argument}
	Let $r : G_{L^+, S} \to \cG_n(\cO)$ be a lifting of $\overline{r}_\ffrm$ of 
	type $\cS_1$ such that $r|_{G_{L, S}}$ is ordinary of weight $\lambda$, for 
	some $\lambda \in (\bbZ_+^n)^{\Hom(L, \overline{\bbQ}_l)}$. 
	Suppose that the following conditions hold: 
	\begin{enumerate}
	\item $\overline{r}_\ffrm|_{G_{L, S}}$ is primitive. 
	\item $\zeta_l \not\in L$, $\overline{r}_\ffrm|_{G_{L^+(\zeta_l)}}$ is Schur, and $\overline{r}_\ffrm(G_{L, S})$ does not have a quotient of order $l$.
	\item $l > 3$ and $l \nmid n$.
	\item Let $d_{L, 0}, d_{L, l}$ be as defined in \S \ref{subsec_dimension_bounds}. Then $d_{L, 0} > |R|n(n+1) + 2$ and $d_{L, l} > \sup(|R|n(n+1) + 2, n(n-1)/2 + 1)$.
	\end{enumerate}
	Then $r|_{G_{L, S}}$ is automorphic of weight $\lambda$. 
\end{theorem}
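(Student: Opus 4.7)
The plan is to translate automorphy of $r|_{G_{L,S}}$ into showing that the map $x : R^\text{univ}_{\cS_1} \to \cO$ corresponding to $r$, with kernel $\frp$ (a prime of dimension $1$ and characteristic $0$), satisfies $J_{\cS_1} \subset \frp \cap P_{\cS_1}$. Fix a minimal prime $Q \subset R^\text{univ}_{\cS_1}$ with $Q \subset \frp$, and set $A = R^\text{univ}_{\cS_1}/Q$. It suffices to prove $J_{\cS_1} R^\text{univ}_{\cS_1} \subset Q$, since then $J_{\cS_1} \subset Q \cap P_{\cS_1} \subset \frp \cap P_{\cS_1}$, so $x$ factors through the Hecke algebra $\bbT^T_1(U(\frl^\infty),\cO)_\ffrm$ and the characteristic polynomials of $r(\Frob_w)$ match those of an ordinary RACSDC automorphic representation; combined with the condition that $r|_{G_{L,S}}$ is ordinary of weight $\lambda$, this gives the desired automorphy.

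To apply Theorem \ref{thm_application_of_Taylor-Wiles} (which yields exactly $J_{\cS_1} R^\text{univ}_{\cS_1} \subset Q$), I need to exhibit a prime $\frp^* \subset R^\text{univ}_{\cS_1}$ of dimension $1$ and characteristic $l$ with $Q \subset \frp^*$, at which the specialisation $r_{\frp^*}$ is generic in the sense of Definition \ref{def_generic_homomorphism}. The required local conditions at $v\in R$ (trivial mod $\frp^*$, with $l^N > n$ controlled by our initial choice of auxiliary primes) and at $v \in S(B)$ (scalar Frobenius after relaxing the Steinberg condition, which is forced by the definition of $\cD_v^\text{St}$ upon semisimplification) are automatic from the fact that $\frp^*$ is a type-$\cS_1$ specialisation. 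Hypotheses (1)--(3) and (5)--(6) of Theorem \ref{thm_application_of_Taylor-Wiles} are then exactly hypotheses (1)--(3) of the present theorem.

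Production of $\frp^*$ is carried out via Lemma \ref{lem_combined_dimension_bound} applied to $A$. This requires two inputs. First, $A$ must be a finite $\Lambda$-algebra; this is proved by a standard presentation argument, using that the local deformation rings at $v \in S(B) \cup R \cup S_a$ are finite over $\cO$-algebras of appropriate relative dimension, while the ordinary condition at $v \in S_l$ allows us to control $R^\triangle_v$ by $\Lambda_v$, so that $R^\text{univ}_{\cS_1}$ is topologically finitely generated over $\Lambda$ by a Galois-cohomology count, and Nakayama gives finiteness from the reduction $\text{mod }\ffrm_\Lambda$. Second, we need $\dim A/(\lambda) > \sup(n[L^+:\bbQ]-d_{L,0},\, n[L^+:\bbQ]-d_{L,l})$. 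Combined with hypothesis (4), it suffices to show that $\dim A/(\lambda) \geq n[L^+:\bbQ] - |R|n(n+1) - 2$. The idea here is to produce a lower bound on the dimension of any component of $\Spec R^\text{univ}_{\cS_1}$ through $\frp$ by writing down enough elements of $A$ algebraically independent over $\cO/\lambda$: at each $v \in R$ the $\cD_v^1$ local deformation ring contributes $n(n+1)$ to this count (this is essentially the relative dimension of $R_v^1/\lambda$ over the unramified quotient, matching the appearance of $|R|n(n+1)$ in hypothesis (4)), while the Steinberg condition at $S(B)$ cuts all local dimension to $0$ modulo the Iwasawa variables; the contribution from $S_l$ via the universal characters $\psi^v_i$ together with the bound $d_{L,l}>n(n-1)/2+1$ accounts for the second max entry and gives the lemma's ordinary hypothesis.

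The main obstacle is the dimension lower bound on $A/(\lambda)$: one must argue that, although we have made no \emph{a priori} smoothness assumption on $R^\text{univ}_{\cS_1}$ and although $Q$ is only a minimal prime contained in $\frp$, every such component has Krull dimension at least $n[L^+:\bbQ]+1 - |R|n(n+1)$. This is precisely where the novelty advertised in the introduction comes in: one replaces the previously-required potential automorphy of individual constituents (which controlled the reducible locus inside the full $R^\text{univ}_{\cS_1}$) with the combination of Lemma \ref{lem_reducible_dimension_bound} and the locally Steinberg relations at $S(B)$, which together cut the reducible locus down so sharply that the generic-prime-finding Lemma \ref{lem_combined_dimension_bound} becomes applicable inside the finite-over-$\Lambda$ quotient $A$. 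The numerical hypothesis (4) is calibrated exactly to make this dimension comparison strict.
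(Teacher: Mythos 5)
There is a genuine gap, and it sits exactly at the point your proposal treats as routine. Theorem \ref{thm_application_of_Taylor-Wiles} does not merely ask for a generic prime $\frp^*$ of dimension $1$ and characteristic $l$ with $Q \subset \frp^*$: its hypothesis (1) demands $J_{\cS_1}R^{\text{univ}}_{\cS_1} \subset \frp^*$, i.e.\ that $\frp^*$ already lies in the (pro-)automorphic locus. Your construction of $\frp^*$ by dimension-counting inside $A = R^{\text{univ}}_{\cS_1}/Q$ gives no control whatsoever on whether $\frp^*$ contains $J_{\cS_1}$ (or $J_M$ for a good extension $M$), so the appeal to Theorem \ref{thm_application_of_Taylor-Wiles} is circular: connecting the component through $Q$ to the automorphic locus is precisely what has to be proved. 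The paper's proof does this in two stages that are absent from your outline: first it finds \emph{one} potentially pro-automorphic component, by locating a generic dimension-$1$ prime inside $R^{\text{univ}}_{\cS_1}/(J_R, J_{\cS_1})$ (where $J_R$ forces triviality at $R$, at a cost of $|R|n^2$ in dimension); then it propagates to \emph{all} minimal primes by the connectedness-dimension result \cite[Lemma 3.21]{Tho16}, which produces, for a putative non-automorphic component $Q_2$ and an automorphic one $Q_1$, a large-dimensional intersection $R^{\text{univ}}_{\cS_1}/(Q_1,Q_2,J_R)$ in which Lemma \ref{lem_combined_dimension_bound} and Proposition \ref{prop_pro-automorphy} yield a generic, potentially pro-automorphic prime lying over $Q_2$. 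Only after every minimal prime is known to be potentially pro-automorphic does one specialise to the prime $\ker(R^{\text{univ}}_{\cS_1}\to\cO)$ attached to $r$, and even then the conclusion is automorphy over a good soluble extension $M$, recovered over $L$ by classicality, base change and soluble descent --- another step your proposal elides by working with $J_{\cS_1}$ alone.

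Two further assertions in your write-up are unjustified and in fact unavailable. First, you claim $A = R^{\text{univ}}_{\cS_1}/Q$ is finite over $\Lambda$ ``by a standard presentation argument'' and Nakayama; no such argument exists, and finiteness of $R^{\text{univ}}_{\cS_1}$ over $\Lambda$ is a \emph{consequence} of the theorem (Corollary \ref{cor_finiteness}), not an input. The only quotients known a priori to be finite over $\Lambda$ are those of the form $R^{\text{univ}}_{\cS_1}/(\lambda, J_M)$, because they are controlled by the Hecke algebra; restricting the dimension bounds to such quotients is exactly the innovation the introduction advertises, and your argument bypasses it illegitimately. Second, the local hypotheses (3) of Theorem \ref{thm_application_of_Taylor-Wiles} are not ``automatic'' for a type-$\cS_1$ specialisation: the $\cD^1_v$ condition at $v\in R$ does not force $r_{\frp^*}|_{G_{L_\wv}}$ to be trivial (hence the ideal $J_R$), and the Steinberg condition at $S(B)$ does not force scalar Frobenius --- the latter is arranged in Proposition \ref{prop_pro-automorphy} only after passing to a suitable good base change.
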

We assume the hypotheses of Theorem \ref{thm_main_argument} for the rest of \S 
\ref{sec_mainargument}. Note that our assumption on $d_{L,0}$ implies that 
$S(B)$ is non-empty. In particular, the conclusion of Theorem 
\ref{thm_main_argument} implies, by local--global compatibility at a place in 
$S(B)$, that $r|_{G_{L,S}}$ is absolutely irreducible. 

Let us say that a soluble CM extension $M/L$ is good if it is linearly disjoint 
from the extension of $L(\zeta_l)$ cut out by 
$\overline{r}_\ffrm|_{G_{L(\zeta_l)}}$ and every prime above $S_l \cup S_a \cup 
R$ splits in $M$. (Primes of $S(B)$ are not required to split, and indeed this 
possibility plays an important role in the proof, cf.~\cite[Proposition 
6.2]{Tho16}.)
\begin{lemma}\label{lem_primitive_after_good_base_change}
	Let $M / L$ be a good extension. Then $\overline{r}_\ffrm|_{G_M}$ is primitive, $\zeta_l \not\in M$, $\overline{r}_\ffrm|_{G_{M^+(\zeta_l)}}$ is Schur, and $\overline{r}_\ffrm(G_M)$ does not have a quotient of order $l$.
\end{lemma}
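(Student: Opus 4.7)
The plan is to deduce all four conclusions from a single underlying fact: that $M/L$ being good forces $\overline{r}_\ffrm$ to have the same image on $G_M$ (resp.\ $G_{M(\zeta_l)}$) as on $G_L$ (resp.\ $G_{L(\zeta_l)}$). Let $K \subset \overline{L}$ denote the fixed field of $\ker(\overline{r}_\ffrm|_{G_L})$; this is a finite Galois extension of $L^+$ containing $L$. Because $\ker(\overline{r}_\ffrm|_{G_{L(\zeta_l)}}) = \ker(\overline{r}_\ffrm|_{G_L}) \cap G_{L(\zeta_l)} = G_{K(\zeta_l)}$, the extension of $L(\zeta_l)$ cut out by $\overline{r}_\ffrm|_{G_{L(\zeta_l)}}$ is precisely $K(\zeta_l)$, so the goodness hypothesis is $M \cap K(\zeta_l) = L$.

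The assertion $\zeta_l \not\in M$ is immediate: $M \cap L(\zeta_l) \subseteq M \cap K(\zeta_l) = L$, and $L$ does not contain $\zeta_l$. For the image equalities, the Galois extensions $M$ and $K$ of $L$ have intersection $L$, so the restriction $\mathrm{Gal}(K/L) \xrightarrow{\sim} \mathrm{Gal}(MK/M)$ is an isomorphism, which is equivalent to $\overline{r}_\ffrm(G_M) = \overline{r}_\ffrm(G_L)$. A parallel computation inside the compositum $MK(\zeta_l)$, whose Galois group over $L$ factors as $\mathrm{Gal}(M/L) \times \mathrm{Gal}(K(\zeta_l)/L)$, identifies $M(\zeta_l) \cap K(\zeta_l)$ as the fixed field of $\mathrm{Gal}(M/L) \times \mathrm{Gal}(K(\zeta_l)/L(\zeta_l))$, i.e.\ $L(\zeta_l)$; this yields $\overline{r}_\ffrm(G_{M(\zeta_l)}) = \overline{r}_\ffrm(G_{L(\zeta_l)})$.

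With these image equalities the remaining conclusions are purely group-theoretic. Absence of a quotient of order $l$ is transported verbatim. For primitivity, any decomposition $\overline{r}_\ffrm|_{G_M} \cong \mathrm{Ind}_{G_{M'}}^{G_M}\sigma$ with $M'/M$ nontrivial is encoded by a partition of $k^n$ into transitively permuted subspaces stabilized by the image $\overline{r}_\ffrm(G_M) = \overline{r}_\ffrm(G_L)$; pulling the stabilizer of a summand back along $\overline{r}_\ffrm|_{G_L}$ realizes $\overline{r}_\ffrm|_{G_L}$ as induced from a proper open subgroup, contradicting its primitivity. For Schur at $M^+(\zeta_l)$, since $\overline{r}_\ffrm(G_{M(\zeta_l)}) = \overline{r}_\ffrm(G_{L(\zeta_l)})$ as subgroups of $\mathrm{GL}_n(k)$, the restrictions $\overline{\rho}_i|_{G_{M(\zeta_l)}}$ and $\overline{\rho}_i|_{G_{L(\zeta_l)}}$ have the same image and hence identical irreducibility and isomorphism properties, so the constituents remain absolutely irreducible and pairwise distinct.

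The main technical point is the Galois-theoretic bookkeeping establishing $M(\zeta_l) \cap K(\zeta_l) = L(\zeta_l)$; it relies on $M/L$ being Galois (available because $M/L$ is soluble) and on a careful analysis of subfields of the compositum $MK(\zeta_l)$. Once this step is in place, each of the four conclusions falls out uniformly from the image-preservation principle.
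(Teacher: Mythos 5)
Your overall strategy is the same as the paper's: identify the extension of $L(\zeta_l)$ cut out by $\overline{r}_\ffrm|_{G_{L(\zeta_l)}}$ (your $K(\zeta_l)$), use goodness to get $M \cap K(\zeta_l) = L$, deduce $\overline{r}_\ffrm(G_M) = \overline{r}_\ffrm(G_L)$ and $\overline{r}_\ffrm(G_{M(\zeta_l)}) = \overline{r}_\ffrm(G_{L(\zeta_l)})$, and transport the purely group-theoretic conditions. This correctly disposes of $\zeta_l \notin M$, the absence of a quotient of order $l$, primitivity, and the Jordan--H\"older part of the Schur condition.

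There is, however, a genuine gap in the Schur statement. For the $\cG_n$-valued representation $\overline{r}_\ffrm|_{G_{M^+(\zeta_l)}}$, being Schur is not only a condition on the Jordan--H\"older factors of the $\GL_n$-valued restriction: it presupposes that $\overline{r}_\ffrm|_{G_{M^+(\zeta_l)}}$ still meets the non-identity component of $\cG_n$, i.e.\ that the preimage of $\cG_n^0$ is the index-two subgroup $G_{M(\zeta_l)}$, equivalently that $M \not\subset M^+(\zeta_l)$ (equivalently $L \not\subset M^+(\zeta_l)$, since $M = L M^+$). This is where the paper's proof does its real work: it reduces, via the observation that $M/M^+$ and $M^+(\zeta_l)/M^+$ arise from $L/L^+$ and $L^+(\zeta_l)/L^+$ by compositum with $M^+$, to showing $M^+ \cap L(\zeta_l) = L^+$, which follows from $M \cap L(\zeta_l) = L$, a consequence of goodness. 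Your proposal never addresses this, and it does not follow from $\zeta_l \notin M$: for instance, when $l \equiv 3 \bmod 4$ and $l > 3$, a CM field of the shape $M = M^+(\sqrt{-l})$ lies inside $M^+(\zeta_l)$ without containing $\zeta_l$, so an argument of the above kind, invoking goodness, is genuinely needed. Two smaller points: the Schur condition also asks that the Jordan--H\"older factors be conjugate self-dual, which your image-preservation principle does cover but which you should state; and your appeal to $M/L$ being Galois ``because it is soluble'' is both unjustified as stated and unnecessary --- the restriction isomorphisms only need $K(\zeta_l)/L$ Galois, and the surjectivity of $G_{M(\zeta_l)} \to \Gal(K(\zeta_l)/L(\zeta_l))$ follows directly, since any $\tau \in G_M$ restricting to a given element of $\Gal(K(\zeta_l)/L(\zeta_l))$ fixes $M \cdot L(\zeta_l) = M(\zeta_l)$.
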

\begin{proof}
We take each property in turn. If $M / L$ is good then $\overline{r}_\ffrm(G_M) = \overline{r}_\ffrm(G_L)$, so this image does not have a quotient of order $l$. Since $M / L$ is linearly disjoint $L(\zeta_l)$, we have $\zeta_l \not\in M$. 

To say that $\overline{r}_\ffrm|_{G_{M^+(\zeta_l)}}$ is Schur is to say that $L 
\not\subset M^+(\zeta_l)$ and, if $\overline{\rho}, \overline{\rho}'$ are two 
Jordan--H\"older factors of $\overline{r}_\ffrm|_{G_M}$, then 
$\overline{\rho}^{c} \cong \overline{\rho}^\vee \otimes \epsilon^{1-n}$ and 
$\overline{\rho} \not\cong \overline{\rho}'$. Since $\overline{r}_\ffrm(G_M) = 
\overline{r}_\ffrm(G_L)$, the latter property is not disturbed. We show that 
under our assumptions we in fact have $M \not\subset M^+(\zeta_l)$. It suffices 
to show that the two extensions $M / M^+$ and $M^+(\zeta_l) / M^+$ are linearly 
disjoint. Since these extensions arise from the linearly disjoint extensions $L 
/ 
L^+$ and $L^+(\zeta_l) / L^+$ by compositum with $M^+$, it is enough to check 
that the extensions $M^+ / L^+$ and $L(\zeta_l) / L^+$ are linearly disjoint, 
or even that $M^+ \cap L(\zeta_l) = L^+$. This follows from the stronger 
assertion that $M \cap L(\zeta_l) = L$. 

Finally the condition that $\overline{r}_\ffrm|_{G_M}$ is primitive depends 
only on the group $\overline{r}_\ffrm(G_M) = \overline{r}_\ffrm(G_L)$, so it is 
inherited from the corresponding condition for $\overline{r}_\ffrm$.
\end{proof}
If $M/L$ is a good extension and $X$ (resp. $\widetilde{X}$) is a set of places of $L^+$ (resp. $L$), then we write $X_M$ (resp $\widetilde{X}_M$) for the set of places of $M^+$ lying above a place of $X$ (resp. places of $M$ lying above a place of $\widetilde{X}$). We write $\Lambda_M = \widehat{\otimes}_{v \in S_{l, M}} \cO \llbracket I_{M_\wv}^{ab}(l)^n \rrbracket$ for the Iwasawa algebra of $M$ (so $\Lambda_L = \Lambda$). There is a natural surjective homomorphism $\Lambda_M \to \Lambda$. We can define a deformation problem
\[ \cS_{1, M} = (  M / M^+, T_M, \widetilde{T}_M, \Lambda_M, \overline{r}_\ffrm|_{G_{M^+}}, \epsilon^{1-n} \delta_{M / M^+}^{n}, \{ R_v^\triangle \}_{v \in S_{l, M}} \cup \{ R_v^{1} \}_{v \in R_M} \cup \{ R_v^\text{St} \}_{v \in S(B)_M } \cup \{ R_v^\square \}_{v \in S_{a, M}} )\]
On the automorphic side, we can define an open compact subgroup $U_M \subset G(\bbA_{M^+}^\infty)$ with the property that $U_L = U$; see \cite[\S 4.5]{Tho16} for details. With this choice, it is possible to define a maximal ideal $\ffrm_M \subset \bbT^{T_M}(U_M(\frl^\infty), \cO)$ with the property that $\overline{r}_{\ffrm_M} = \overline{r}_\ffrm|_{G_{M^+}}$, and then there exists a commutative diagram of $\Lambda_M$-algebras
\[ \xymatrix{ R_{\cS_1}^\text{univ} & \ar[l] P_{\cS_1} \ar[r] & \bbT^T_1(U(\frl^\infty), \cO)_\ffrm \\ 
R_{\cS_{1, M}}^\text{univ} \ar[u] & \ar[l] P_{\cS_{1, M}} \ar[u] \ar[r] & \bbT^{T_M}_1(U_M(\frl^\infty), \cO)_{\ffrm_M} \ar[u]
 } \]
(see \cite[Proposition 4.18]{Tho16}). Let $J_M = J_{\cS_{1, M}} P_{\cS_1}$. 
Then we have an inclusion $J_{\cS_{1, M}} P_{\cS_1} \subset J_{\cS_1}$. More 
generally, if $M_1 / M_0 / L$ is a tower of good extensions of $L$, then 
$J_{M_1} \subset J_{M_0}$. We say that a prime ideal $\frp \subset 
R_{\cS_1}^\text{univ}$ is \emph{potentially pro-automorphic} if there exists a 
good extension $M/L$ such that $J_MR_{\cS_1}^\text{univ} \subset \frp$.
\begin{proposition}\label{prop_pro-automorphy} Let $\frp \subset R_{\cS_1}^\text{univ}$ be a prime of dimension 1 and characteristic $l$ which is potentially pro-automorphic and generic.  Suppose further that for each $v \in R$, the restriction $r_\frp|_{G_{L_\wv}}$ is trivial. Then every minimal prime $Q \subset \frp$ is potentially pro-automorphic.
\end{proposition}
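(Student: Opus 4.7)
The plan is to apply Theorem \ref{thm_application_of_Taylor-Wiles} over a good extension $M/L$ witnessing the pro-automorphy of $\frp$ (possibly after replacing $M$ by a further good enlargement), thereby deducing pro-automorphy of $Q$ via the same extension. By hypothesis there is a good extension $M/L$ with $J_M R_{\cS_1}^\text{univ} \subset \frp$. Restriction of universal deformations from $G_{L^+}$ to $G_{M^+}$ yields a canonical $\Lambda$-algebra map $R_{\cS_{1,M}}^\text{univ} \to R_{\cS_1}^\text{univ}$; let $\frp_M, Q_M \subset R_{\cS_{1,M}}^\text{univ}$ denote the preimages of $\frp$ and $Q$. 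The containment $J_M R_{\cS_1}^\text{univ} \subset \frp$ is equivalent to $J_{\cS_{1,M}} R_{\cS_{1,M}}^\text{univ} \subset \frp_M$, and using the finiteness of $R_{\cS_{1,M}}^\text{univ}$ over $\Lambda_M$ together with the surjectivity of $\Lambda_M \to \Lambda$, one checks that $\frp_M$ is again of dimension $1$ and characteristic $l$. Once hypotheses (1)--(6) of Theorem \ref{thm_application_of_Taylor-Wiles} are verified for the triple $(M, \cS_{1,M}, \frp_M)$, the theorem produces $J_{\cS_{1,M}} R_{\cS_{1,M}}^\text{univ} \subset Q_M$, which pushes forward to $J_M R_{\cS_1}^\text{univ} \subset Q$, i.e.~pro-automorphy of $Q$ via $M$.

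The bulk of the work lies in verifying hypotheses (1)--(6) of Theorem \ref{thm_application_of_Taylor-Wiles} for $(M, \cS_{1,M}, \frp_M)$. Hypothesis (1) is exactly the translated pro-automorphy. Hypotheses (4) and (5) are precisely the content of Lemma \ref{lem_primitive_after_good_base_change}, and (6) is unchanged. For (2), the universal characters $\psi_w^i$ at places of $S_{l,M}$ above $S_l$ are pulled back from the $\psi_v^i$ under $\Lambda_M \to \Lambda$ (because $S_l$ splits completely in a good extension), so genericity at $l$ is inherited from $r_\frp$; absolute irreducibility of $r_{\frp_M}|_{G_{M,S}}$ follows from that of $r_\frp|_{G_{L,S}}$ because $\overline{r}_\ffrm(G_M) = \overline{r}_\ffrm(G_L)$. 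At $v \in R_M$, hypothesis (3) is inherited from the given triviality of $r_\frp|_{G_{L_\wv}}$ at $R$-places: since $R$ splits completely in $M/L$, residue fields are preserved and the $l^N > n$ condition is unaffected.

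The main obstacle is the $S(B)$ condition of hypothesis (3): that $r_{\frp_M}|_{G_{M_w}}$ be unramified with scalar Frobenius for each $v \in S(B)_M$. Since $\frp_M$ has characteristic $l$ and $q_v \equiv 1 \pmod{l}$ for $v \in S(B)$, the Frobenius eigenvalues of the Steinberg semisimplification $\psi \oplus \psi\epsilon \oplus \cdots \oplus \psi\epsilon^{n-1}$ coincide modulo $\frp_M$, giving the scalar-Frobenius condition; what remains is to arrange that the local monodromy of the Steinberg-type deformation vanishes modulo $\frp_M$. We handle this following \cite[\S 6]{Tho16}, in particular exploiting the construction of \cite[Proposition 6.2]{Tho16}: we replace $M$ by a further good extension $M'/M$ in which the $S(B)$ places are permitted to remain inert with sufficient residual degree that the local Steinberg monodromy trivializes upon restriction to $G_{M'_w}$. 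This enlargement is permissible precisely because good extensions are not required to split $S(B)$. Running the above argument with $M'$ in place of $M$ then completes the proof.
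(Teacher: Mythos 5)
Your overall skeleton --- pull $\frp$ and $Q$ back to the deformation ring over a good extension witnessing pro-automorphy, enlarge that extension, verify the hypotheses of Theorem \ref{thm_application_of_Taylor-Wiles}, and push the resulting containment back down --- is indeed what the paper intends (its proof is a pointer to \cite[Proposition 6.2]{Tho16} with the $R=\bbT$ reference swapped for Theorem \ref{thm_application_of_Taylor-Wiles}). The genuine gap is in your treatment of the $S(B)$ part of hypothesis (3). An unramified local extension never affects inertia: if $M'_w/L_\wv$ is inert of any residue degree then $I_{M'_w}=I_{L_\wv}$, so the image of inertia under $r_\frp$ is literally unchanged, and your claim that the ``local Steinberg monodromy trivializes'' after an inert enlargement is false in general (already for $n=2$ the image of a tame generator can be a nontrivial unipotent matrix over $A=R^{\text{univ}}_{\cS_1}/\frp$ and survives any unramified base change). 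Conversely, your argument for scalar Frobenius is also insufficient: equal Frobenius eigenvalues modulo $\frp$ only make $r_\frp(\Frob)$ a scalar times a unipotent; it is the $l$-power residue-degree growth (together with $l \nmid n$) that kills the unipotent part. What is actually needed at $S(B)$ is ramification: modulo $\frp$ the image of $G_{L_\wv}$ in $\PGL_n(A)$ is a finite $l$-group (tame inertia maps to a unipotent of $l$-power order, an $l$-power of Frobenius is scalar by the Steinberg condition and $q_v\equiv 1 \bmod l$, and wild inertia dies in the pro-$l$ group $1+M_n(\ffrm_A)$), so one must choose the further good extension $M'$ so that each completion $M'_w$ at a place above $S(B)$ contains the fixed field of the open subgroup on which $r_\frp$ is scalar-valued --- an extension whose ramification index, not just residue degree, is divisible by a suitable power of $l$. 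This is precisely why good extensions are not required to split (or stay unramified) at $S(B)$. As written, your enlargement cannot produce the ``unramified'' half of hypothesis (3), so the appeal to Theorem \ref{thm_application_of_Taylor-Wiles} does not go through.

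Two smaller points. Your justification that $\frp_M$ has dimension $1$ invokes finiteness of $R^{\text{univ}}_{\cS_{1,M}}$ over $\Lambda_M$, which is not available here (it is Corollary \ref{cor_finiteness}, a consequence of Theorem \ref{thm_main_argument}, whose proof uses the present proposition); the correct input is that $R^{\text{univ}}_{\cS_1}$ is finite over the image of $R^{\text{univ}}_{\cS_{1,M}}$, cf.\ \cite[Proposition 4.17]{Tho16}. And absolute irreducibility of $r_{\frp_M}|_{G_M}\otimes \Frac(A)$ does not follow from the equality $\overline{r}_\ffrm(G_M)=\overline{r}_\ffrm(G_L)$, which concerns only the residual representation: the restriction of the characteristic-$l$ representation $\rho_\frp$ to the finite-index subgroup $G_M$ could a priori become reducible, and ruling this out requires a genuine argument (this is where the primitivity hypothesis on $\overline{r}_\ffrm|_{G_{L,S}}$ is used), not the one-line reason you give.
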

\begin{proof} The proof is the same as the proof of \cite[Proposition 6.2]{Tho16}. We need only replace the reference there to \cite[Corollary 4.20]{Tho16} to Theorem \ref{thm_application_of_Taylor-Wiles} here.
\end{proof}
\begin{proof}[Proof of Theorem \ref{thm_main_argument}]
For any good extension $M / L$, the ring $R_{\cS_1}^\text{univ} / (\lambda, J_M)$ is a finite $\Lambda$-algebra. Indeed, $R_{\cS_{1, M}}^\text{univ} / (\lambda, J_{\cS_{1, M}})$ is a finite $\Lambda_M$-algebra, and we can appeal to \cite[Lemma 4.16]{Tho16} and \cite[Proposition 4.17]{Tho16}. It follows from Lemma \ref{lem_combined_dimension_bound} that any quotient of $R^\text{univ}_{\cS_1}/(\lambda, J_M)$ of dimension at least $1 + \sup(n[L^+ : \bbQ] - d_{L, 0}, n[L^+ : \bbQ] - d_{L, l})$ contains a generic, potentially pro-automorphic prime $\frp$ of dimension 1 and characteristic $l$.

Fix a choice of lifting $r^\text{univ}_{\cS_1} : G_{L^+, S} \to 
\cG_n(R_{\cS_1}^\text{univ})$ representing the universal deformation. This 
induces for each $v \in R$ a homomorphism $R_v^1 \rightarrow 
R^\text{univ}_{\cS_1}$, and we let $J_R$ denote the ideal generated by the 
images of $\ffrm_{R_v^1}, v \in R$. The ideal $J_R$ is independent of the 
choice of lifting, and for any quotient $R_{\cS_1}^\text{univ}/I$ of 
characteristic $l$, we have $\dim R_{\cS_1}^\text{univ}/(J_R, I) \geq \dim 
R_{\cS_1}^\text{univ}/I - |R|n^2$, by \cite[Theorem 15.1]{Mat07} (note that 
$R_v^1/(\lambda)$ has dimension $n^2$ \cite[Proposition 3.15]{Tho16}). It 
follows 
that there exists a generic prime $\frp \subset 
R^\text{univ}_{\cS_1}/(J_R,J_L)$ of dimension 1 and characteristic $l$, since 
$\dim R^\text{univ}_{\cS_1}/(J_R, J_L) \geq n[L^+ : \bbQ] - n^2 |R| > 
\sup(n[L^+ : \bbQ] - d_{L, 0}, n[L^+ : \bbQ] - d_{L, l})$ (here we are using 
assumption (4) in the statement of Theorem \ref{thm_main_argument}). By 
Proposition \ref{prop_pro-automorphy}, any minimal prime $Q \subset \frp$ of 
$R^\text{univ}_{\cS_1}$ is potentially pro-automorphic. 

We now consider the partition of the set of minimal primes of $R^\text{univ}_{\cS_1}$ into two sets $\cC_1, \cC_2$, consisting of those primes which respectively are and are not potentially pro-automorphic. We have shown that $\cC_1$ is non-empty. We claim that $\cC_2$ is empty. Otherwise, \cite[Lemma 3.21]{Tho16} implies the existence of primes $Q_1 \in \cC_1, Q_2 \in \cC_2$ such that 
\begin{equation*} \dim R^\text{univ}_{\cS_1}/(Q_1, Q_2) \geq n[L^+ : \bbQ] - |R|n - 2,
\end{equation*}
and hence 
\begin{equation*} \dim R^\text{univ}_{\cS_1}/(Q_1, Q_2, J_R) \geq n[L^+ : \bbQ] - |R|n - |R|n^2 - 2 = n[L^+ : \bbQ] - |R|n(n+1) - 2.
\end{equation*}
 The ring $R^\text{univ}_{\cS_1}/Q_1$ (and hence each of its quotients) is finite over $\Lambda_L$. Applying Lemma \ref{lem_combined_dimension_bound} and assumption (4) once again, we see that  $R^\text{univ}_{\cS_1}/(Q_1, Q_2, J_R)$ contains a generic, potentially pro-automorphic prime $\frp'$ of dimension 1 and characteristic $l$. Applying Proposition \ref{prop_pro-automorphy} to $\frp'$, we deduce that $Q_2$ is potentially pro-automorphic, a contradiction. 
 
 Now let $r : G_{L^+, S} \rightarrow \cG_n(\cO)$ be a lifting of $\overline{r}_\ffrm$ which is ordinary of weight $\lambda$ and of type $\cS_1$, as in the statement of the theorem. This induces a homomorphism $R^\text{univ}_{\cS_1} \rightarrow \cO$. Let $Q$ be a minimal prime contained inside the kernel of this homomorphism. Then there is a good extension $M/L$ such that $J_M \subset Q$, and so the induced homomorphism $R^\text{univ}_{\cS_{1, M}} \to \cO$ kills $J_{\cS_{1, M}}$, and the map $P_{\cS_{1, M}} \rightarrow \cO$ induced by $r|_{G_{M, S_M}}$ factors through $\bbT^{T_M}_{1}(U_M(\frl^\infty),\cO)_{\ffrm_M}$. Using \cite[Lemma 2.6.4]{Ger09}, \cite[Proposition 3.3.2]{Clo08}, and \cite[Lemma 2.7]{Tho16} (respectively a classicality statement in Hida theory, base change for the unitary group $G_{M^+}$, and soluble descent for $\GL_n(\bbA_M)$) we deduce that the representation $r|_{G_L}$ is automorphic of weight $\lambda$. 
\end{proof}
The following corollary was established in the course of the above proof.
\begin{corollary}\label{cor_finiteness}
	With hypotheses on $\overline{r}_\ffrm$ as in Theorem \ref{thm_main_argument}, $R_{\cS_1}$ is a finite $\Lambda$-algebra.
\end{corollary}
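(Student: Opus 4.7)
The plan is to deduce the finiteness from the fact, established in the course of the proof of Theorem~\ref{thm_main_argument}, that every minimal prime of $R_{\cS_1}^{\text{univ}}$ is potentially pro-automorphic. Since $R_{\cS_1}^{\text{univ}}$ is Noetherian, it has only finitely many minimal primes $Q_1,\dots,Q_s$, and its nilradical $\mathcal{N}$ is nilpotent.

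The first step is to check that for any good extension $M/L$, the quotient $R_{\cS_1}^{\text{univ}}/J_M R_{\cS_1}^{\text{univ}}$ is a finite $\Lambda$-module. The ring $P_{\cS_{1,M}}/J_{\cS_{1,M}}$ embeds into the Hida Hecke algebra $\bbT^{T_M}_1(U_M(\frl^\infty),\cO)_{\ffrm_M}$, which is by construction a finite $\Lambda_M$-module; combined with the finiteness of $R_{\cS_{1,M}}^{\text{univ}}$ over $P_{\cS_{1,M}}$, this shows $R_{\cS_{1,M}}^{\text{univ}}/J_{\cS_{1,M}}$ is finite over $\Lambda_M$. The base-change results \cite[Lemma 4.16]{Tho16} and \cite[Proposition 4.17]{Tho16}, already invoked (modulo $\lambda$) in the proof of Theorem~\ref{thm_main_argument}, then transfer this finiteness to $R_{\cS_1}^{\text{univ}}/J_M R_{\cS_1}^{\text{univ}}$ over $\Lambda$. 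Alternatively, one upgrades the modulo-$\lambda$ statement used in that proof by topological Nakayama, since $R_{\cS_1}^{\text{univ}}/J_M R_{\cS_1}^{\text{univ}}$ is $\Lambda$-adically complete.

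Having this, for each $i$ one chooses a good extension $M_i/L$ witnessing the potential pro-automorphy of $Q_i$, so that $J_{M_i}R_{\cS_1}^{\text{univ}}\subseteq Q_i$. Then $R_{\cS_1}^{\text{univ}}/Q_i$ is a quotient of $R_{\cS_1}^{\text{univ}}/J_{M_i}R_{\cS_1}^{\text{univ}}$, and hence is finite over $\Lambda$. Since $\mathcal{N}\subseteq\bigcap_i Q_i$, the nilreduction $R_{\cS_1}^{\text{univ}}/\mathcal{N}$ embeds into $\prod_i R_{\cS_1}^{\text{univ}}/Q_i$, and is therefore itself finite over $\Lambda$. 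Finally, the filtration $R_{\cS_1}^{\text{univ}}\supseteq\mathcal{N}\supseteq\mathcal{N}^2\supseteq\dots\supseteq\mathcal{N}^k=0$ is finite; each graded piece $\mathcal{N}^j/\mathcal{N}^{j+1}$ is a finitely generated module over the Noetherian ring $R_{\cS_1}^{\text{univ}}/\mathcal{N}$, hence finite over $\Lambda$, so $R_{\cS_1}^{\text{univ}}$ is finite over $\Lambda$.

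No genuinely new obstacle is expected here beyond reassembling pieces from the proof of Theorem~\ref{thm_main_argument}. The one point I would want to verify carefully is the lift from modulo-$\lambda$ finiteness (as recorded in that proof) to honest $\Lambda$-finiteness of $R_{\cS_1}^{\text{univ}}/J_M R_{\cS_1}^{\text{univ}}$, but this is routine either from the base-change input or from topological Nakayama.
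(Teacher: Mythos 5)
Your argument is essentially the paper's own: the corollary is stated there as having been "established in the course of the above proof," namely from the facts that every minimal prime of $R_{\cS_1}^{\text{univ}}$ is potentially pro-automorphic and that $R_{\cS_1}^{\text{univ}}/J_M R_{\cS_1}^{\text{univ}}$ is finite over $\Lambda$ (via the Hecke algebra, the finiteness of $R$ over $P$, and the base-change results), which is exactly what you use. Your explicit assembly steps (topological Nakayama to pass from the mod-$\lambda$ statement, and the nilradical filtration) are the routine details the paper leaves implicit, and they are correct.
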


\section{The end}\label{sec_main_theorem}

We are now in a position to state and prove the main theorem of this paper. 
\begin{theorem}\label{thm_new_main_theorem} Let $F$ be an imaginary CM number field with maximal totally real subfield $F^+$, and let $n \geq 2$ be an integer. Let $l$ be a prime, and suppose that $\rho : G_F \rightarrow \mathrm{GL}_n(\overline{\bbQ}_l)$ is a continuous semisimple representation satisfying the following hypotheses.
\begin{enumerate} \item $\rho^c \cong \rho^\vee \epsilon^{1-n}$.
\item $\rho$ is ramified at only finitely many places.
\item  $\rho$ is ordinary of weight $\lambda$, for some $\lambda \in (\bbZ_+^n)^{\Hom(F, \barQl)}$.
\item There is an isomorphism $\overline{\rho}^\text{ss} \cong 
\overline{\rho}_1 \oplus \dots \oplus \overline{\rho}_\num$, where each 
$\overline{\rho}_i$ is absolutely irreducible and satisfies 
$\overline{\rho}_i^c \cong \overline{\rho}_i^\vee \epsilon^{1-n}$, and 
$\overline{\rho}_i \not\cong \overline{\rho}_j$ if $i \neq j$. 
\item There exists a finite place $\wv_0$ of $F$, prime to $l$, such that 
$\rho|_{G_{F_{\wv_0}}}^\text{ss} \cong \oplus_{i=1}^n \psi \epsilon^{n-i}$ for 
some unramified character $\psi : G_{F_{\wv_0}} \rightarrow 
\overline{\bbQ}_l^\times$.
\item There exists a RACSDC representation $\pi$ of $\mathrm{GL}_n(\bbA_F)$ and $\iota : \barQl \to \bbC$ such that: \begin{enumerate} \item $\pi$ is $\iota$-ordinary. 
\item $\overline{r_{ \iota}(\pi)}^\text{ss} \cong \overline{\rho}^\text{ss}$.
\item $\pi_{\wv_0}$ is an unramified twist of the Steinberg representation.
\end{enumerate} 
\item\label{ass:prim} $F(\zeta_l)$ is not contained in $\overline{F}^{\ker \ad 
(\overline{\rho}^\text{ss})}$ and $F$ is not contained in $F^+(\zeta_l)$. For 
each $1 \leq i, j \leq \num$, 
 $\overline{\rho}_i|_{G_{F(\zeta_l)}}$ is absolutely 
irreducible and $\overline{\rho}_i|_{G_{F(\zeta_l)}} \not\cong 
\overline{\rho}_j|_{G_{F(\zeta_l)}}$ if $i \neq j$. Moreover, 
$\overline{\rho}^\text{ss}$ is primitive and $\overline{\rho}^\text{ss}(G_{F})$ 
has no quotient of order $l$. 
\item $l > 3$ and $l \nmid n$.
\end{enumerate}
	Then $\rho$ is automorphic: there exists an $\iota$-ordinary RACSDC automorphic representation $\Pi$ of $\GL_n(\bbA_F)$ such that $r_\iota(\Pi) \cong \rho$. 
\end{theorem}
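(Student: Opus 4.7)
The plan is to reduce Theorem \ref{thm_new_main_theorem} to Theorem \ref{thm_main_argument} via a carefully chosen soluble CM base change $L/F$, and then descend automorphy back to $F$ by soluble base change. First I would choose a soluble CM extension $L/F$ satisfying several conditions simultaneously: $L/L^+$ is everywhere unramified so that the definite unitary group $G$ of \S\ref{sec_automorphic_forms} is available; $L/F$ is linearly disjoint from the composite of $F(\zeta_l)$ with $\overline{F}^{\ker \ad \overline{\rho}^{\text{ss}}}$ (so that primitivity, the Schur condition at $L^+(\zeta_l)$, and the no-quotient-of-order-$l$ condition are inherited, via a Lemma \ref{lem_primitive_after_good_base_change}-style argument); the place $\wv_0$ splits completely in $L$, so the set $S(B)$ of places above $\wv_0$ is non-empty and serves as a set of Steinberg places; each $l$-adic place $v$ of $L^+$ has $[L^+_v : \bbQ_l]$ large, making $d_{L,l}$ exceed $\sup(|R|n(n+1)+2,\, n(n-1)/2+1)$; and the global quantity $d_{L,0}$ (defined using $S(B)_L$) exceeds $|R|n(n+1)+2$. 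Auxiliary sets $R$ and $S_a$ can be adjoined to $T$ as prescribed in \S\ref{subsec_set-up_for_R_equals_T}: taking $R$ small (possibly empty), and $S_a$ a non-empty set of absolutely unramified tame places, inert in $L(\zeta_l)$, at which $\overline{\rho}|_{G_L}$ is unramified with scalar Frobenius.

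Next I would transfer the automorphic input. By soluble base change, $\pi$ lifts to a RACSDC representation $\pi_L$ of $\GL_n(\bbA_L)$ which remains $\iota$-ordinary, is an unramified twist of Steinberg at each place of $L$ above $\wv_0$, and satisfies $\overline{r_\iota(\pi_L)}^{\text{ss}} \cong \overline{\rho}|_{G_L}^{\text{ss}}$. Since $L/L^+$ is everywhere unramified and $G$ is definite, Jacquet--Langlands transfers $\pi_L$ to an automorphic form on $G$, which (after choosing the level subgroup $U$ to match the conditions of \S\ref{subsec_set-up_for_R_equals_T}) gives rise to a maximal ideal $\ffrm$ in the Hecke algebra whose associated Galois representation $\overline{r}_\ffrm$ extends $\overline{\rho}|_{G_{L^+}}^{\text{ss}}$. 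The conjugate self-dual extension of $\rho|_{G_L}$ to a lifting $r : G_{L^+, T} \to \cG_n(\cO)$ (built from hypothesis (1)) is of type $\cS_1$: ordinarity at places of $S_{l,L}$ comes from hypothesis (3), Steinberg-ness at $S(B)$ from hypothesis (6)(c), and the $R_v^1$ condition at $R$ is automatic after shrinking $R$ appropriately.

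Then I would apply Theorem \ref{thm_main_argument} to this lifting. Its hypotheses on $\overline{r}_\ffrm$ (primitivity, Schur at $L^+(\zeta_l)$, no quotient of order $l$, $\zeta_l \notin L$) follow by Lemma \ref{lem_primitive_after_good_base_change} from the corresponding hypotheses (7) over $F$, using the disjointness properties built into the choice of $L$; the numerical conditions $l>3$ and $l \nmid n$ are unchanged; and the key dimension bounds on $d_{L,0}$ and $d_{L,l}$ are precisely what was arranged in the construction of $L$. The conclusion is that $\rho|_{G_L}$ is automorphic of weight $\lambda|_{\Hom(L,\barQl)}$. Soluble descent for $\GL_n$ then produces an $\iota$-ordinary RACSDC representation $\Pi$ of $\GL_n(\bbA_F)$ with $r_\iota(\Pi) \cong \rho$, as required.

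The main obstacle is the construction of the base change $L/F$: one must simultaneously enlarge the local degrees $[L^+_v : \bbQ_l]$ to meet the bound on $d_{L,l}$, while also arranging enough split Steinberg places in $S(B)$ to force $d_{L,0}$ past $|R|n(n+1)+2$, all the while preserving the disjointness conditions needed to inherit the Schur and primitivity hypotheses. This is precisely the bookkeeping that in \cite[\S 7]{Tho16} was intertwined with an appeal to potential automorphy of the individual constituents; the present paper's innovation, via Lemma \ref{lem_combined_dimension_bound} and Proposition \ref{prop_pro-automorphy}, is that the dimension bookkeeping alone suffices, so that this final descent step collapses to a straightforward soluble base change argument.
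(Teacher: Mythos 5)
Your overall skeleton is the same as the paper's (reduce to Theorem \ref{thm_main_argument} over a soluble CM extension $L$, transfer $\pi$ by base change to get the maximal ideal $\ffrm$, then conclude by soluble descent), but there is a genuine gap at exactly the point you flag as "the main obstacle" and then wave away. First, your assertion that one can take "$R$ small (possibly empty)" is not available: for $r|_{G_{L^+}}$ to be a lifting of type $\cS_1$ and for the $P_{\cS_1} \to \bbT$ framework to apply, $R$ must contain \emph{every} prime-to-$l$ place of $L^+$ lying above a place at which $\rho$ or $\pi$ ramifies (other than those above $v_0$), and at each such place the conditions defining $\cD_v^1$ must hold ($q_v \equiv 1 \bmod l$ with $l^N \Vert q_v-1$, $l^N > n$, trivial residual restriction, unipotent ramification of $\rho$ and $\rho'$); this is arranged in the paper by a preliminary soluble base change, not by "shrinking $R$". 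Second, and more seriously, the inequalities $d_{L,0} > |R|n(n+1)+2$ and $d_{L,l} > \sup(|R|n(n+1)+2,\,n(n-1)/2+1)$ involve $|R|$ on both sides of the trade-off: any base change that enlarges the $l$-adic local degrees (to boost $d_{L,l}$) will in general also multiply the number of places above the ramification set, i.e.\ enlarge $|R|$ at the same rate, so the bounds are not met by simply "choosing $[L^+_v:\bbQ_l]$ large". You never resolve this, and the circular phrase "the key dimension bounds \dots are precisely what was arranged in the construction of $L$" hides the fact that no construction has been given.

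The paper's resolution is a specific device you would need (or an equivalent): after the preliminary base change, let $Y_0$ be the places of $F^+$ dividing $l$ or below the ramification of $\rho$ or $\pi$ (excluding $v_0$), and take $L^+ = M_0 \cdot M_1$ with $M_0/F^+$ cyclic totally real of odd degree $\delta$, split at $X_0 \cup \{v_0\}$ but \emph{totally inert} at every place of $Y_0$, and $M_1$ a totally real quadratic extension split at $X_0 \cup \{v_0\} \cup Y_0$. Inertness at $Y_0$ simultaneously forces $d_{L,l} \geq \delta$ (the $l$-adic places lie in $Y_0$) and caps $|R| \leq 2|Y_0|$ independently of $\delta$, while complete splitting of $v_0$ together with \cite[Proposition 19]{Mai02} gives $d_{L,0} = 2\delta$; one then just takes $\delta$ large, odd, and prime to the residue degrees of $Y_0$. (Your suggestion that $d_{L,0}$ grows because of "enough split Steinberg places" is the right intuition, but the actual count needs Maire's theorem.) A further, smaller omission: the paper preserves hypothesis (7) after base change not by imposing linear disjointness directly but by fixing an auxiliary set $\widetilde{X}_0$ of places (one non-split place in each minimal Galois subextension of the field cut out by $\overline{\rho}|_{G_{F(\zeta_l)}}$, plus the place $v_1$ furnishing $S_a$) and requiring all subsequent extensions to be $X_0$-split; your linear-disjointness variant could be made to work, but you would still need to check it is compatible with all the splitting and inertness constraints above, which is precisely the bookkeeping the paper carries out and your proposal does not.
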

\begin{proof}
The proof is similar to, and even simpler than, the proof of \cite[Theorem 7.1]{Tho16}. We can find a coefficient field $K \subset \overline{\bbQ}_l$ and, after replacing $\rho$ by a conjugate, assume that $\rho = r|_{G_F}$, where $r : G_{F^+} \to \cG_n(\cO)$ is a homomorphism such that $\overline{r}$ is Schur and $\nu \circ r = \epsilon^{1-n} \delta_{F / F^+}^n$. Similarly we can assume the existence of a model $\rho' : G_{F} \to \GL_n(\cO)$ for $r_\iota(\pi)$ which extends to a homomorphism $r' : G_{F^+} \to \cG_n(\cO)$ such that $\nu \circ r' = \epsilon^{1-n} \delta_{F / F^+}^n$ and such that $\overline{r}' = \overline{r}$. In fact, $\overline{r}|_{G_{F^+(\zeta_l)}}$ is Schur. Indeed, making use of assumption \ref{ass:prim}, we just need to check that  $\overline{r}(G_{F^+(\zeta_l)})$ 
meets both components of $\cG_n$. This is our assumption that $F \not\subset F^+(\zeta_l)$.

After making a preliminary soluble base change, we can assume that the following further conditions are satisfied:
\begin{enumerate}
\item $F / F^+$ is everywhere unramified, and each place of $F^+$ dividing $l$ or above which $\rho$ or $\pi$ is ramified splits in $F$.
\item The place $\wv_0$ is split over $F^+$. We write $v_0 = \wv_0|_{F^+}$.
\item For each place $w$ of $F$ at which $\rho$ or $\pi$ is ramified, or which divides $l$, the representation $\overline{r}|_{G_{F_w}}$ is trivial. 
\item For each prime-to-$l$ place $w$ of $F$ at which $\rho$ or $\pi$ is ramified, we have $q_w \equiv 1 \text{ mod }l$, and if $l^N||(q_w - 1)$ then $l^N > n$. Moreover, $\rho|_{G_{F_w}}$ and $\rho'|_{G_{F_w}}$ are unipotently ramified.
\end{enumerate}
We can find a finite set $\widetilde{X}_0$ of finite places of $F$ satisfying the following conditions:
\begin{itemize}
\item $\widetilde{X}_0$ does not contain any place at which $\rho$ or $\pi$ is ramified, or any place dividing $l$.
\item Let $E / F(\zeta_l)$ denote the extension cut out by $\overline{\rho}|_{G_{F(\zeta_l)}}$. Then for any Galois subextension $E / E'/ F$ with $\Gal(E'/F)$ simple, there exists a place $w \in \widetilde{X}_0$ which does not split in $E'$. 
\item $\widetilde{X}_0$ contains an absolutely unramified place $\wv_1$ such that $\overline{\rho}(\Frob_{\wv_1})$ is scalar and $q_{\wv_1} \not\equiv 1 \text{ mod }l$.
\end{itemize}
Let $X_0$ denote the set of places of $F^+$ lying below a place of $\widetilde{X}_0$. We write $v_1 = \wv_1|_{F^+}$. If $L / F$ is any Galois CM extension which is $\widetilde{X}_0$-split, then the analogue of assumption (7) of the theorem (where $F$ is replaced by $L$ and $\overline{\rho}$ by $\overline{\rho}|_{G_L}$) is satisfied.

If $L^+ / F^+$ is a Galois, totally real, $X_0$-split extension and $L = L^+ \cdot F$, then $L / F$ is Galois CM and $\widetilde{X}_0$-split. We claim that we can find a soluble, totally real, $X_0$-split extension such that the hypotheses of Theorem \ref{thm_main_argument} are satisfied for $r|_{G_{L^+}}$. This will complete the proof: we deduce from Theorem \ref{thm_main_argument} that $r|_{G_L} = \rho|_{G_L}$ is automorphic of weight $\lambda$ and irreducible. The automorphy of $\rho$ then follows by soluble descent.

Let $\widetilde{Y}_0$ denote the set of places $\wv \neq \wv_0, \wv_0^c$ of $F$ 
dividing $l$ or at which $\rho$ or $\pi$ is ramified, and let $Y_0$ denote the 
set of places of $F^+$ lying below a place of $\widetilde{Y}_0$. Then every 
place of $Y_0$ splits in $F$ and $Y_0 \cap ( X_0 \cup \{ v_0 \} ) = \emptyset$. 
For any odd integer $\delta \geq 1$, we can find a cyclic totally real 
extension $M_0 / F^+$ of degree $\delta$ which is $X_0 \cup \{ v_0 \}$-split 
and such that each place $v \in Y_0$ of $F^+$ is totally inert in $M_0$.

Let $M_1$ be a totally real quadratic extension of $F^+$ which is $X_0 \cup \{ 
v_0 \} \cup Y_0$-split. We will take $L = F \cdot M_0 \cdot M_1$. We claim that 
for a suitable choice of odd integer $\delta$, this $L$ will suffice for the 
application of Theorem \ref{thm_main_argument}. More precisely, we will apply 
Theorem \ref{thm_main_argument} to the homomorphism $r|_{G_{L^+}}$ with the 
following data:
\begin{itemize}
\item $S_l$ is the set of $l$-adic places of $L^+$.
\item $S(B)$ is the set of places of $L^+$ lying above $v_0$.
\item $R$ is the set of prime-to-$l$ places of $L^+$ lying above a place of $Y_0$.
\item $S_a$ is the set of places of $L^+$ lying above $v_1$. 
\end{itemize}
Let $\Delta$ denote the Galois group of the maximal abelian pro-$l$ extension of $L$ unramified outside $l$, and let $\Delta_0$ denote the Galois group of the maximal abelian pro-$l$ extension of $L$ which is unramified outside $l$ and which is $S(B)$-split. Then $\Delta_0$ is naturally a quotient of $\Delta$. Let us define
\[ d_0 = \dim_{\bbQ_l} \ker(\Delta[1/l] \to \Delta_0[1/l])^{c = -1} \]
and
\[ d_l = \inf_{v \in S_l} [L^+_v : \bbQ_l]. \]
In order to complete the proof, we must show that the odd integer $\delta \geq 
1$ can be chosen so that the inequalities 
\[ d_0 > |R| n(n+1)/2 + 2 \]
and
\[ d_l > \sup(|R|n(n+1)/2 + 2, n(n-1)/2+1) \]
are simultaneously satisfied. It follows from \cite[Proposition 19]{Mai02} that 
$d_0 = 2\delta$ (see the proof of \cite[Theorem 7.1]{Tho16} for more details). 
If $\delta$ is prime to the absolute residue degrees of all the places of 
$Y_0$, then we will have $|R| \leq 2 |Y_0|$ and $d_l \geq \delta$. We will 
therefore be done if we can choose $\delta$ to satisfy
\[ 2\delta > 2 |Y_0| n(n+1)/2 + 2 \]
and
\[ \delta > \sup(2|Y_0|n(n+1)/2+2, n(n-1)/2 + 1). \]
This is clearly possible, and concludes the proof.
\end{proof}
We can use the same idea to prove the following finiteness result (compare 
\cite[Theorem 10.2]{Tho11}), which plays a crucial role in some of the level 
raising arguments in \cite{new-tho-symm}.
\begin{theorem}
	Let $F$ be a CM field, let $l$ be a prime, and let $\iota : \overline{\bbQ}_l \to \bbC$ be an isomorphism. Let $S$ be a finite set of finite places of $F^+$, containing the $l$-adic ones, and suppose that each place of $S$ splits in $F$. Choose for each $v \in S$ a place $\wv$ of $F$ lying above $v$, and let $\widetilde{S}$ denote the set of the $\wv$.
	
	Let $\pi$ be a RACSDC automorphic representation of $\GL_n(\bbA_F)$, and let $K / \bbQ_l$ be a coefficient field such that $r_{\iota}(\pi)$ can be chosen to take values in $\GL_n(\cO)$, and extend it to a homomorphism $r : G_{F^+} \to \cG_n(\cO)$ such that $\nu \circ r = \epsilon^{1-n} \delta_{F / F^+}^n$.	Suppose that the following conditions are satisfied:
	\begin{enumerate}
		\item $\pi$ is $\iota$-ordinary. For each place $v \in S_l$, $\overline{r}|_{G_{F_\wv}}$ is the trivial representation. 
		\item $\pi$ is unramified outside $S$.
		\item There exists a place $v_0 \nmid l$ of $S$ such that $\pi_{\wv_0}$ is an unramified twist of the Steinberg representation. Moreover, $q_{v_0} \equiv 1 \text{ mod }l$ and $\overline{r}|_{G_{F_{\wv_0}}}$ is the trivial representation.
		\item Let $\overline{\rho} = \overline{r}|_{G_{F, S}}$. There is an 
		isomorphism $\overline{\rho}^\text{ss} \cong \overline{\rho}_1 \oplus 
		\dots \oplus \overline{\rho}_\num$, where each $\overline{\rho}_i$ is 
		absolutely irreducible and satisfies $\overline{\rho}_i^c \cong 
		\overline{\rho}_i^\vee \epsilon^{1-n}$. (In particular, $\overline{r}$ 
		is Schur and therefore $\overline{\rho}$ is semisimple.)
		\item $F(\zeta_l)$ is not contained in $\overline{F}^{\ker \ad 
			(\overline{\rho}^\text{ss})}$ and $F$ is not contained in $F^+(\zeta_l)$. For 
		each $1 \leq i, j \leq \num$, 
		$\overline{\rho}_i|_{G_{F(\zeta_l)}}$ is absolutely 
		irreducible and $\overline{\rho}_i|_{G_{F(\zeta_l)}} \not\cong 
		\overline{\rho}_j|_{G_{F(\zeta_l)}}$ if $i \neq j$. Moreover, 
		$\overline{\rho}$ is primitive and 
		$\overline{\rho}(G_{F})$ has no quotient of order $l$.
		\item $l > 3$ and $l \nmid n$.
	\end{enumerate}
	Define the global deformation problem
	\[ \cS = ( F / F^+, S, \widetilde{S}, \Lambda, \overline{r}, \epsilon^{1-n} \delta^n_{F / F^+}, \{ R_v^\triangle \}_{v \in S_l} \cup \{ R_v^\square \}_{v \in S - (S_l \cup \{ v_0 \})} \cup \{ R_{v_0}^{St}\} ). \]
	Then $R_{\cS}^\text{univ}$ is a finite $\Lambda$-algebra. 
\end{theorem}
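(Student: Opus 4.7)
The plan is to reduce the finiteness of $R_\cS^\text{univ}$ over $\Lambda$ to an application of Corollary~\ref{cor_finiteness}, via a soluble totally real base change modelled on the proof of Theorem~\ref{thm_new_main_theorem}. First, by possibly enlarging $S$, I would add an auxiliary place $v_1$ which is absolutely unramified, has $\overline{r}(\Frob_{\wv_1})$ scalar, satisfies $q_{v_1} \not\equiv 1 \pmod l$, and does not split in $F(\zeta_l)$; by Chebotarev such a place exists. Under these conditions every $R_{v_1}^\square$-lift is automatically unramified, and enlarging $S$ induces a surjection $R_{\cS'}^\text{univ} \twoheadrightarrow R_\cS^\text{univ}$, so it suffices to prove finiteness for the enlarged problem (with $\{v_1\}$ playing the role of $S_a$). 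Following the argument in the proof of Theorem~\ref{thm_new_main_theorem}, I would then choose an auxiliary set $\widetilde{X}_0$ of finite places of $F$ containing $\widetilde{v}_1$ and sufficient to control simple subquotients of the splitting field of $\overline{\rho}|_{G_{F(\zeta_l)}}$, and take $L^+ = F^+ \cdot M_0 \cdot M_1$ with $M_0/F^+$ cyclic of large odd degree $\delta$ and $M_1/F^+$ totally real quadratic, both $X_0 \cup \{v_0\}$-split, such that every place $v \in S - (S_l \cup \{v_0, v_1\})$ is totally inert in $M_0$. For $\delta$ sufficiently large (and appropriately coprime to the relevant absolute residue degrees), the inequalities on $d_{L, 0}$ and $d_{L, l}$ required by Corollary~\ref{cor_finiteness} hold, and at each preimage $v'$ in $L^+$ of a place in $S - (S_l \cup \{v_0, v_1\})$ we have $q_{v'} \equiv 1 \pmod l$, $\overline{r}|_{G_{L_{\wv'}}}$ trivial, and $l^N > n$ where $l^N \| q_{v'} - 1$.

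Setting $L = L^+ \cdot F$ and defining the $\cS_1$-type problem $\cS_{1, L}$ over $L^+$ (with $R = R_L$, $S(B) = \{v_0\}_L$, $S_a = \{v_1\}_L$), Corollary~\ref{cor_finiteness} gives that $R_{\cS_{1, L}}^\text{univ}$ is a finite $\Lambda_L$-algebra. The finiteness of $R_\cS^\text{univ}$ over $\Lambda$ would then follow by running the argument of the proof of Theorem~\ref{thm_main_argument} for the $\cS$-problem in place of $\cS_1$: the dimension bounds of \S\ref{subsec_dimension_bounds} apply verbatim, since $R_v^\square/(\lambda)$ has the same Krull dimension $n^2$ as $R_v^1/(\lambda)$, so Lemma~\ref{lem_combined_dimension_bound} produces generic, potentially pro-automorphic primes of dimension $1$ in sufficiently high-dimensional quotients of $R_\cS^\text{univ}/(\lambda, J_R)$; an adaptation of Proposition~\ref{prop_pro-automorphy} (see below) propagates pro-automorphy to every minimal prime of $R_\cS^\text{univ}$; and the base change comparison tools \cite[Lemma~4.16]{Tho16} and \cite[Proposition~4.17]{Tho16}, combined with the finiteness of $R_{\cS_{1, L}}^\text{univ}$, imply that $R_\cS^\text{univ}/(\lambda, J_M)$ is a finite $\Lambda$-algebra for each good extension $M/L$. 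Since every minimal prime of $R_\cS^\text{univ}$ then contains some such $J_M$, the finiteness of $R_\cS^\text{univ}$ over $\Lambda$ follows.

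The main obstacle is the adaptation of Proposition~\ref{prop_pro-automorphy} to the $\cS$-setting. In the $\cS_1$-setting, pro-automorphy propagation relies on Theorem~\ref{thm_application_of_Taylor-Wiles}, which is formulated only for $\cS_1$-type problems because the Hecke algebras available live at Iwahori level and match the $R_v^1$ condition. For $\cS$, one must argue by descent through the base change to $L^+$: for a generic prime $\frp$ of $R_\cS^\text{univ}$, the restriction $r_\frp|_{G_{L^+}}$ lies, after a further good extension of sufficient ramification at each $v' \in R_L$ that trivializes the eigenvalues of tame monodromy, in the $R_{v'}^1$-locus, at which point Theorem~\ref{thm_application_of_Taylor-Wiles} applies to the further base-changed $\cS_{1, M}$-problem, and the resulting pro-automorphy pulls back to $\frp$. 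Making this descent uniform over the universal deformation ring --- in particular, checking that each irreducible component of $R_{v'}^\square$ meets an $R_{v'}^1$-component after a controlled finite ramified extension --- is the technical content that separates the $\cS$-case from the $\cS_1$-case.
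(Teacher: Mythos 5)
There is a genuine gap, and it sits exactly where you flag it. Your plan is to rerun the proof of Theorem \ref{thm_main_argument} for the problem $\cS$ itself, but every input of that argument that touches automorphic forms --- Theorem \ref{thm_application_of_Taylor-Wiles} and Proposition \ref{prop_pro-automorphy} --- is only available for $\cS_1$-type problems, with $R_v^1$ conditions at the places of $R$ matching the Iwahori-level Hecke algebras. You propose to repair this by a ``descent'' through further good extensions that trivialize the eigenvalues of tame monodromy, and you explicitly leave the uniform statement (that the relevant locus of $R_{v'}^\square$ is absorbed into the $R_{v'}^1$-locus after a controlled ramified base change) as unresolved ``technical content''. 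That statement, made uniform over the universal deformation, is not a side issue: it is also needed earlier in your own argument, since even the assertion that $R_\cS^{\text{univ}}/(\lambda, J_M)$ is a finite $\Lambda$-algebra presupposes a well-defined restriction morphism $R_{\cS_{1,M}}^{\text{univ}} \to R_\cS^{\text{univ}}$, i.e.\ that the restriction to $G_{M^+}$ of an \emph{arbitrary} type-$\cS$ deformation (unrestricted at $R$) is of type $\cD^1$ at the places of $R_M$. So as written the proposal does not close. A secondary problem: your extension $L$ is built with the places of $S - (S_l \cup \{v_0, v_1\})$ totally inert in a cyclic odd-degree $M_0$; an unramified inert extension never trivializes $\overline{r}|_{G_{F_\wv}}$ when $\overline{r}$ is ramified at $\wv$, and if the order of $q_v$ mod $l$ is even, no odd-degree inert extension yields $q_{v'} \equiv 1 \bmod l$. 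In Theorem \ref{thm_new_main_theorem} these conditions were arranged by a preliminary soluble base change replacing $F$; here $F$ is fixed, so they must be folded into the construction of $L$ itself (including genuine local ramification at those places), which your construction does not do.

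The paper's proof is much shorter and avoids all of this: it never runs the main argument for $\cS$ over $F$ and needs no $R=\bbT$ statement for unrestricted local conditions. One chooses a single CM extension $L/F$ such that $\overline{r}|_{G_{L^+}}$ is Schur, such that for $v \in R_L$ one has $q_v \equiv 1 \bmod l$ and $\overline{r}|_{G_{L_\wv}}$ trivial, and --- this is the key bullet --- such that for every $v \in R$ and $w \mid v$ the induced map $R^\square_{L_\wv} \to R^\square_{F_\wv}$ factors through $R^\square_{L_\wv} \to R^1_{L_\wv}$. (This can be arranged because, once the residual representation is locally trivial, every lift factors through the tame pro-$l$ quotient and the eigenvalues of tame monodromy are $l$-power roots of unity of order bounded in terms of $n$, $l$ and the residue cardinality, so sufficiently $l$-ramified local extensions force the $\cD^1$ condition on \emph{all} lifts at once.) With this in place, restriction of deformations defines a finite morphism $R_{\cS_L}^{\text{univ}} \to R_\cS^{\text{univ}}$, where $\cS_L$ is an $\cS_1$-type problem over $L$; choosing $L$ (as in the proof of Theorem \ref{thm_new_main_theorem}) so that the hypotheses of Theorem \ref{thm_main_argument} hold for $r|_{G_{L^+}}$, Corollary \ref{cor_finiteness} gives that $R_{\cS_L}^{\text{univ}}$ is a finite $\Lambda_L$-algebra, and finiteness of $R_\cS^{\text{univ}}$ over $\Lambda$ follows immediately. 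In short: the point you isolate as the obstacle is dealt with once and for all at the level of the universal local deformation rings in the choice of $L$, after which no propagation-of-automorphy argument for $\cS$ is required.
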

\begin{proof}
	Let $R = S - S_l \cup \{ v_0 \}$, $S(B) = \{ v_0 \}$. If $L / F$ is a CM extension, let $S_L$ denote the set of places of $L^+$ above $S$, $\widetilde{S}_L$ the set of places of $L$ above $\widetilde{S}$, and define $R_L$, $\widetilde{R}_L$ etc.\ similarly. If $L$ satisfies the following conditions:
	\begin{itemize}
		\item $\overline{r}|_{L^+}$ is Schur;
		\item For each place $v \in R_L$, $q_v \equiv 1 \text{ mod }l$ and $\overline{r}|_{G_{L_\wv}}$ is the trivial representation;
		\item For each place $v \in R$ and each place $w | v$ of $L^+$, the induced map $R_{L_\wv}^\square \to R_{F_\wv}^\square$ factors through the quotient $R_{L_\wv}^\square\to R_{L_\wv}^1$,
	\end{itemize}
	then we can define the global deformation problem
	\[ \cS_L = ( L / L^+, S_L, \widetilde{S}_L, \Lambda_L, \overline{r}|_{G_{L^+}}, \epsilon^{1-n} \delta^n_{L / L^+}, \{ R_v^\triangle \}_{v \in S_{l, L}} \cup \{ R_v^1 \}_{v \in R_L} \cup \{ R_{v}^{St}\}_{v \in S(B)_L} ), \]
	and restriction from $F^+$ to $L^+$ determines a finite morphism $R_{\cS_L}^\text{univ} \to R_{\cS}^\text{univ}$. If moreover $L$ satisfies the conditions of Theorem \ref{thm_main_argument}, then Corollary \ref{cor_finiteness} will imply that $R_{\cS_L}^\text{univ}$ is a finite $\Lambda_L$-algebra, hence that $R_{\cS}^\text{univ}$ is a finite $\Lambda$-algebra. Such an extension $L / F$ can be constructed in exactly the same manner as in the proof of Theorem \ref{thm_new_main_theorem}.
\end{proof}

\bibliographystyle{alpha}
\bibliography{ModLiftBib}
\end{document}